\documentclass[a4paper]{amsart}
\usepackage{amsmath,amssymb,amsthm}
\usepackage{amsfonts}

\usepackage{xcolor}
\usepackage{tikz}
\usepackage{multicol}
\usepackage{url}
\usepackage{cancel}
\usepackage{hyperref}


\definecolor{green3}{rgb}{0,0.6,0}


\newcommand{\R}{\mathbb R}
\newcommand{\C}{\mathbb{C}}
\newcommand{\Z}{\mathbb{Z}}
\newcommand{\N}{\mathbb{N}}
\newcommand{\Q}{\mathbb{Q}}
\newcommand{\K}{\mathbb{K}}

\newcommand{\T}{\mathcal{T}}
\newcommand{\cA}{\mathcal{A}}

\DeclareMathOperator\val{val}

\newcommand{\M}{\mathbf{M}}

\newtheorem{theorem}{Theorem}[section]
\newtheorem{lemma}[theorem]{Lemma}
\newtheorem{proposition}[theorem]{Proposition}
\newtheorem{corollary}[theorem]{Corollary}
\newtheorem{comp}{Computation}[section]
\newtheorem{notation}[theorem]{Notation}

\theoremstyle{definition}

\newtheorem{definition}[theorem]{Definition}
\newtheorem{example}[theorem]{Example}

\newtheorem{remark}[theorem]{Remark}

\title[Tropical Severi varieties
of univariate polynomials]{Arithmetics and combinatorics of
tropical Severi varieties of univariate polynomials}
\author[A. Dickenstein, M.I. Herrero, L.F. Tabera]{Alicia Dickenstein,
Mar\'ia Isabel Herrero, Luis Felipe Tabera}
\date{}

\address{A. Dickenstein and M.I. Herrero: Dto. de Matem\'atica, FCEN -
Universidad de Buenos Aires, and IMAS (UBA-CONICET),  Ciudad
Universitaria - Pab. I - C1428EGA Buenos Aires, Argentina}

\address{L.F. Tabera: Dto. Matem\'aticas,
Estad\'istica y Computaci\'on, Universidad de Cantabria, Spain}
\email{alidick@dm.uba.ar, iherrero@dm.uba.ar, taberalf@unican.es}

\begin{document}

\begin{abstract}
We give a description of the tropical Severi variety of
univariate polynomials of degree $n$ having two double
roots.  We show that, as a set,
it is given as the union of three explicit types of cones of maximal dimension $n-1$,
where only cones of two of these types are cones of the secondary fan of $\{0,
\dots, n\}$.
Through Kapranov's theorem, this goal is achieved by
a careful study of the possible valuations of the elementary
symmetric functions of the roots of a polynomial with two
double roots. Despite its apparent simplicity, the computation of the
tropical Severi variety has both combinatorial and arithmetic ingredients.

\end{abstract}

\maketitle

\section{Introduction}\label{sec:introduction}

Moduli spaces of various objects are of primary interest in algebraic geometry.
Among them, Severi varieties are classical objects which
give parameter spaces for nodal hypersurfaces.  Starting with~\cite{Mikh05},
techniques from tropical geometry have been interestingly applied to the study of
classical enumerative problems. Mikhalkin's correspondence theorem 
allows to compute tropically the number of planar curves of degree $d$ and any
number $\delta$ of nodes passing through $\frac{(d+3)d}{2}-\delta$
points in general position, that is, the degree of the Severi
variety parametrizing those curves or, more generally, the degree
of the Severi varieties of nodal curves with $\delta$ nodes
defined by polynomials with support in (the lattice points of) a
given lattice polygon.  For curves of degree $d \ge \delta+2$,
the degree of the Severi variety coincides with the Gromov-Witten invariant
$N_{d,\binom{d-1}{2}-\delta}$. We refer the reader to~\cite{B12} for an introduction
to tropical geometry techniques for algebraic curve counting problems.

Consider a (finite) lattice configuration $A$ in any dimension (for instance,
all the lattice points in the $d$-th dilate of a unit simplex).
Order the points in $A$ and
denote by $\mathcal{A}$ the matrix having these points as columns.
The secondary fan of $A$ (defined and studied in \cite{GKZ})
parametrizes the regular polyhedral subdivisions of
the configuration. In the case of a
single node, the Severi variety corresponding to polynomials with
support in $A$ is defined by the $A$-discriminant.
It was proven in \cite{DFS07} that the tropical $A$-discriminant
coincides with the Minkowski sum of the row span of $\mathcal{A}$
and the tropicalization of the kernel of $\mathcal{A}$, and it is
the union of some cones in the secondary fan of $A$. Severi varieties
of polynomials with support in $A$  and increasing number
of nodes form a natural stratification of the $A$-discriminant.

As a consequence of the general position of
the points, the tropical curves appearing in Mikhalkin's
correspondence theorem can be described by the associated regular
subdivision of the support. That is, the set of tropical curves
with a specified combinatorial type counted in Mikhalkin's formula
correspond to polyhedral cones in the associated secondary fan.
However, these cones are a fraction of all possible cones in the
associated tropical Severi variety. E. Katz noted in \cite{Katz}
that there are maximal cones that are not supported in cones of
the secondary fan of $A$. Thus, the combinatorial description of
the curves is not enough in many cases to decide if a tropical
curve given by a tropical polynomial is in the  corresponding
Severi variety or not. This behavior was also observed by J.~J.
Yang, who gave a partial description of the tropicalization of the
Severi varieties in \cite{Y1,Y2}. We explore this phenomenon and
give a full characterization in the univariate setting for the
case of $\delta=2$ nodes. Besides the combinatorial constraints,
we describe arithmetic restrictions.

A recent interesting paper of Esterov~\cite{Esterov15} defines
affine characteristic classes of affine varieties, which govern
equivariant enumerative problems. The computation of the
fundamental class of a hypersurface of the torus in the
corresponding affine cohomology ring, amounts to the computation
of the Newton polytope of a defining equation. In general, for a
subvariety of the torus of any codimension, the computation of its
fundamental class amounts to the computation of its tropical fan.
The codimension two strata of the $A$-discriminant are given by
the closure of the hypersurfaces with support in $A$ with one
triple root or those hypersurfaces with two nodes, that is, the
$2$-Severi variety.  Esterov gives affine Pl\"ucker formulas
relating these classes with the class of the $A$-discriminant.

Our setting is the following. We fix a natural number $n \ge 4$.
The Severi variety $Sev_n^2$ is the Zariski closure
of the set of univariate polynomials of degree $n$
having $2$ distinct double roots (over an algebraically closed field of characteristic zero).
$Sev_n^2$ is a rational variety (with a rational parametrization defined over $\Q$)
and thus irreducible, of affine dimension $n-1$. The
tropical Severi variety $\mathcal{T}(Sev_n^2)$ is the
tropicalization of $Sev_n^2$. Despite its apparent simplicity, the computation of the
tropicalization $\mathcal{T}(Sev_n^2)$ has both combinatorial and arithmetic ingredients.

We describe in Section~\ref{sec:cones} three types of cones with maximal dimension $n-1=n+1-2$,
termed I, II and III. Cones of
type I correspond to two
double roots with different valuations and arise from the transversal intersection of
two maximal dimensional cones in the discriminant. Cones of types II and III
correspond to roots which are not tropically generic: both roots need to have the same valuation,
and the relative interior of these cones correspond to the
tropicalization of polynomials with two nodes whose initial coefficients satisfy explicit algebraic relations. Moreover,
cones of type III intersect the interior of a maximal cone in the
secondary fan of our configuration $\{0, 1, \dots, n\}$ of
exponents. The existence of a second node imposes a ``hidden tie''
with arithmetic constraints.

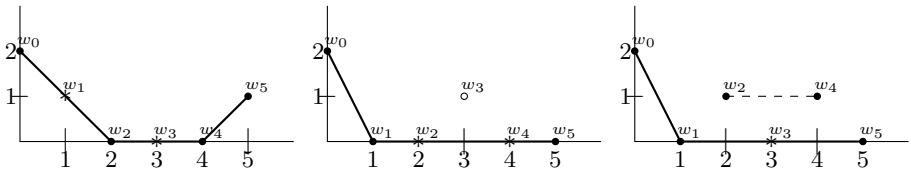
\begin{figure}[ht]
\begin{tabular}{lcr}
\begin{tikzpicture}[scale=0.6]
\draw (0,3) -- (0,0) -- (6,0); \draw (0,1) node {$-$} (1,0) node
{$|$} (5,0) node {$|$}; \draw (-0.2,1) node {\small{1}} (-0.2,2)
node {\small{2}}; \draw (1,-0.4) node {\small{1}} (2,-0.4) node
{\small{2}} (3,-0.4) node {\small{3}}; \draw (4,-0.4) node
{\small{4}} (5,-0.4) node {\small{5}}; \draw[fill] (0,2) circle
(2pt) (2,0) circle (2pt) (4,0) circle (2pt) (5,1) circle (2pt);
\draw[thick] (0,2) -- (2,0) -- (4,0) -- (5,1); \draw (1,1) node
{$*$} (3,0) node {$*$}; \draw (0.2,2.2) node {\tiny{$w_0$}}
(1.2,1.2) node {\tiny{$w_1$}} (2.2,0.2) node {\tiny{$w_2$}}; \draw
(3.2,0.2) node {\tiny{$w_3$}} (4.2,0.2) node {\tiny{$w_4$}}
(5.2,1.2) node {\tiny{$w_5$}};
\end{tikzpicture}
\begin{tikzpicture}[scale=0.6]
\draw (0,3) -- (0,0) -- (6,0); \draw (0,1) node {$-$} (3,0) node
{$|$}; \draw (-0.2,1) node {\small{1}} (-0.2,2) node {\small{2}};
\draw (1,-0.4) node {\small{1}} (2,-0.4) node {\small{2}} (3,-0.4)
node {\small{3}}; \draw (4,-0.4) node {\small{4}} (5,-0.4) node
{\small{5}}; \draw[fill] (0,2) circle (2pt) (1,0) circle (2pt)
(5,0) circle (2pt); \draw (3,1) circle (2pt); \draw[thick] (0,2)
-- (1,0) -- (5,0); \draw (2,0) node {$*$} (4,0) node {$*$}; \draw
(0.2,2.2) node {\tiny{$w_0$}} (1.2,0.2) node {\tiny{$w_1$}}
(2.2,0.2) node {\tiny{$w_2$}}; \draw (3.2,1.2) node {\tiny{$w_3$}}
(4.2,0.2) node {\tiny{$w_4$}} (5.2,0.2) node {\tiny{$w_5$}};
\end{tikzpicture}
\begin{tikzpicture}[scale=0.6]
\draw (0,3) -- (0,0) -- (6,0); \draw (0,1) node {$-$} (2,0) node
{$|$} (4,0) node {$|$}; \draw (-0.2,1) node {\small{1}} (-0.2,2)
node {\small{2}}; \draw (1,-0.4) node {\small{1}} (2,-0.4) node
{\small{2}} (3,-0.4) node {\small{3}}; \draw (4,-0.4) node
{\small{4}} (5,-0.4) node {\small{5}}; \draw[fill] (0,2) circle
(2pt) (1,0) circle (2pt) (2,1) circle (2pt); \draw[fill] (4,1)
circle (2pt) (5,0) circle (2pt); \draw[thick] (0,2) -- (1,0) --
(5,0); \draw[dashed] (2,1) -- (4,1); \draw (3,0) node {$*  $};
\draw (0.2,2.2) node {\tiny{$w_0$}} (1.2,0.2) node {\tiny{$w_1$}}
(2.2,1.2) node {\tiny{$w_2$}}; \draw (3.2,0.2) node {\tiny{$w_3$}}
(4.2,1.2) node {\tiny{$w_4$}} (5.2,0.2) node {\tiny{$w_5$}};
\end{tikzpicture}
\end{tabular}
\caption{Left: Type I, Center: Type II, Right: Type III}\label{fig:3types}
\end{figure}

To explain Figure~\ref{fig:3types} featuring the three types of cones in $\mathcal{T}(Sev_n^2)$, we need to set some notations.
Any vector $w \in \R^{n+1}$ defines a marked subdivision
$\Pi_w$ of $\cA = \{ 0, \dots, n\}$ as follows.
Let $\Gamma_1,\ldots, \Gamma_s$ be
the lower faces of the convex hull of the set $\Gamma_w=\{(0,w_0),
\dots, (n,w_n)\}$, known as the Newton diagram of $w$.
Let $\sigma_1,\ldots,\sigma_s$ be the subsets
of $\cA$ obtained by projecting the points of $\Gamma_w$ in each
face $\Gamma_1,\ldots, \Gamma_s$ onto the first coordinate. These
subsets (or cells) determine a regular subdivision $\Pi_w=
\{\sigma_1, \ldots, \sigma_s\}$ of $\cA$. A point $j$ is a marked
point in the cell $\sigma_i$ if $(j, \val(w_j))$ is a point in the
relative interior of the face $\Gamma_i$. In that case, the cell
$\sigma_i$ is called a marked cell.
 Figure~\ref{fig:3types} depicts for $n=5$, the
Newton diagrams corresponding to weights $w \in \R^6$ in the
interior of one sample cone of each type. Note that for points $w$
in (the relative interior of) cones of type I, there are two
different marked segments (with one marking each) in $\Pi_w$. For
points $w$ in (the relative interior of) cones of type II, there
is a single segment in $\Pi_w$ with two markings (and the
exceptional small configurations
in~Definition~\ref{def:exceptional configurations} should be
excluded by Proposition~\ref{prop:exceptionalConfig}); these are
also cones that arise from the intersection of two maximal
dimensional cones of the tropical discriminant. Regular
subdivisions $\Pi_w$ associated with points $w$ in (the relative
interior of) cones of type III, have a single marked segment with
a single marking, but there is also a tie between values that do
not lie in the Newton diagram, that is, which is not visible from
the marked subdivision (thus called a ``hidden tie''). Moreover,
we will see in Definition~\ref{def:III} that there are arithmetic
ingredients which control these values. These cones arise from two
nodes with the same valuation and with further algebraic relations
on the initial coefficients. They don't have for the moment a
purely tropical explanation. We refer to
Definitions~\ref{def:I},~\ref{def:II} and~\ref{def:III} for the
precise conditions.

Our main result is the following:

\begin{theorem}\label{teo:main} Let $n \ge 4$. Then,
$\mathcal{T}(Sev_n^2)\subset \R^{n+1}$ equals as a set the
union of all cones of types $I$, $II$ and $III$.
\end{theorem}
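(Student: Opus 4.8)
The plan is to prove the two set-theoretic inclusions separately, using Kapranov's theorem to translate membership in $\mathcal{T}(Sev_n^2)$ into statements about valuations of coefficients of polynomials with two double roots.

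\medskip

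\noindent\textbf{Setup via Kapranov and elementary symmetric functions.} A monic polynomial $f = \prod_{k=1}^{n}(t-r_k)$ of degree $n$ with two double roots can be written with roots $r_1=r_2=a$, $r_3=r_4=b$, and $n-4$ further roots $r_5,\dots,r_n$ free; its coefficients are $\pm$ the elementary symmetric functions $e_0,\dots,e_n$ of the $r_k$, and the parametrization over $\Q$ realizing $Sev_n^2$ as a rational variety identifies $\mathcal{T}(Sev_n^2)$, via Kapranov's theorem, with the closure of the set of vectors $(\val(e_0),\dots,\val(e_n)) = (\val(1), \val(\sum r_k), \dots, \val(\prod r_k))$ as $a,b$ and the $r_k$ range over a valued algebraically closed field (of equal characteristic zero, e.g. the Puiseux series). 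So the whole theorem reduces to: determine exactly which weight vectors $w\in\R^{n+1}$ arise as $w_j = \val(e_j(r_1,\dots,r_n))$ with $r_1=r_2=a$, $r_3=r_4=b$. First I would record the obvious degree-of-freedom count: generically the valuations of $a,b,r_5,\dots,r_n$ are $n-2$ free parameters, but the image is $(n-1)$-dimensional because of the possible cancellations in $\val(\sum \cdots)$, which is where the non-generic (types II, III) behavior enters.

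\medskip

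\noindent\textbf{Inclusion $\supseteq$: the three families are contained in $\mathcal{T}(Sev_n^2)$.} For each type I, II, III one must exhibit, for a generic $w$ in the relative interior of such a cone, an actual polynomial with two double roots whose coefficient valuations equal $w$; continuity/closedness of tropicalization then covers the boundary. For type I (two double roots $a,b$ with $\val(a)\neq\val(b)$, tropically generic) this is the transversal-intersection picture: choose the $n-4$ other roots and $a,b$ with valuations dictated by the two marked segments of $\Pi_w$ and with generic initial terms, so that no unexpected cancellation occurs; the valuation of each $e_j$ is then the min of the relevant monomials and equals the lower hull value, i.e. $w_j$. For types II and III the two double roots have equal valuation $\val(a)=\val(b)=:v$, and one must choose the \emph{initial coefficients} of $a$ and $b$ to satisfy precisely the algebraic relations singled out in Definitions~\ref{def:II} and~\ref{def:III}, so that a cancellation occurs in exactly one coefficient $e_j$ — pushing $w_j$ strictly above the lower hull of the other points (type III, the ``hidden tie'') or creating the two-marking segment (type II). Here I would invoke Proposition~\ref{prop:exceptionalConfig} to discard the degenerate small configurations where such a choice is impossible. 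The computations are local: only the two or three monomials of $e_j$ of lowest potential valuation matter, and one solves a small system in the initial coefficients $\bar a,\bar b$; I expect this direction to be essentially bookkeeping once the right relations from Section~\ref{sec:cones} are in hand.

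\medskip

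\noindent\textbf{Inclusion $\subseteq$: every point of $\mathcal{T}(Sev_n^2)$ lies in a type I/II/III cone.} This is the substantive direction. Take $w = \lim (\val(e_j(r)))$; after passing to a subsequence and normalizing, reduce to $w_j = \val(e_j(r))$ exactly for explicit roots with $r_1=r_2=a$, $r_3=r_4=b$. The argument is a careful case analysis on $\val(a)$ versus $\val(b)$ and on whether cancellations occur:
\begin{itemize}
\item If $\val(a)\neq\val(b)$: show the double roots behave tropically like ordinary roots of the relevant segments, so $w$ lies in a cone of the secondary fan obtained by intersecting the two discriminantal cones — this is type I.
\item If $\val(a)=\val(b)=v$: the valuations of the other $n-4$ roots and $v$ still determine a lower hull, and one must analyze when the initial terms of $a,b$ force a cancellation. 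The key lemma is a precise description of $\mathrm{in}(e_j)$ as a polynomial in $\bar a,\bar b$ and the initial terms of the other roots: one shows this initial form, viewed in the two variables $\bar a,\bar b$, has a structured factorization (Vandermonde-type, compare the matrix $\Mi$ in the preamble), so that it can vanish only along the explicitly computable loci that define types II and III.
\end{itemize}
The main obstacle — and the arithmetic heart of the paper — is exactly this last point: ruling out that some \emph{other}, unforeseen choice of initial coefficients produces a cancellation in two coefficients simultaneously, or in a coefficient whose index does not match the type II/III pattern. Concretely, one must show the relevant system of ``$\mathrm{in}(e_{j_1})=\mathrm{in}(e_{j_2})=0$ in $\bar a,\bar b$'' has solutions with $\bar a\neq\bar b$, both nonzero, \emph{only} in the listed cases; this is where resultant/discriminant computations on the two-variable initial forms are unavoidable and where the finitely many exceptional small configurations of Definition~\ref{def:exceptional configurations} have to be dealt with by hand. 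Once that classification of vanishing loci is complete, assembling the pieces into ``$w$ lies in a cone of type I, II, or III'' is immediate from the definitions, and combining both inclusions gives the theorem.
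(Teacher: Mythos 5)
Your two-inclusion decomposition, the appeal to Kapranov's theorem, and the sufficiency plan ($\supseteq$: construct a polynomial in $\K[x]_w\cap Sev_n^2$ for generic $w$ in the relative interior of each cone, then close up) match the paper's organization (Theorems~\ref{teo:Imain}, \ref{teo:IImaina}, \ref{teo:IImainb}, \ref{teo:SufficiencyIII}), and you correctly flag Proposition~\ref{prop:exceptionalConfig} as the place the small configurations get excluded. The gap is in the necessity direction ($\subseteq$), where your plan to classify vanishing loci of $\mathrm{in}(e_j)(\bar a,\bar b,\dots)$ by resultants does not actually produce the type~III constraints. Two concrete problems: (i) the divisibility condition in Definition~\ref{def:III} ($d>1$ divides $\gcd(i_2-i_1,i_3-i_1)$ and controls the hidden tie) comes from the arithmetic fact that $\bar b/\bar a$ must be a root of unity of order dividing that $\gcd$ (Proposition~\ref{prop:unityRoot}); nothing in your sketch produces it. (ii) The hidden tie is \emph{not} a cancellation in ``exactly one coefficient $e_j$'': after the initial-level cancellations, several coefficient valuations are pushed strictly above the Newton diagram, and the tie is the requirement that the \emph{resulting} valuations coincide at the two indices $j_1,j_2$ — a constraint on next-order terms in $a$, $b$ and the free roots, invisible to the initial forms in $\bar a,\bar b$ alone.

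What makes the paper's necessity proof go through — and what your proposal does not use — is the linearization: fix the double roots to $1$ and $b$, so that for each $b$ the admissible coefficient vectors are exactly $\ker\M(b)$, a \emph{linear} space. Homogeneous linear ideals have a tropical basis of circuits (Lemma~\ref{lemma:minimal}), and Theorem~\ref{teo:matrix-by-minors} turns membership of $w$ in $\T(\ker\M(b))$ into a finite list of ``minimum of $\val(D_{J\cup\{k\}}(b))+w_k$ is attained twice'' conditions. Lemma~\ref{lemma:valD_I} then computes $\val(D_J(b))$ exactly as $v$ or $4v$ depending on a congruence mod $d$, and the hidden tie drops out of a short contradiction argument with two well-chosen circuits (Theorem~\ref{teo:firstInclusion}). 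Your plan attacks a nonlinear elimination problem in the roots head-on; it is not wrong in principle, but I do not see how the resultant classification you sketch would discover either the root-of-unity condition or the precise tie pattern without essentially rediscovering this linear reformulation.
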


The proof of Theorem~\ref{teo:main} follows from  a series of results. On one side,
Theorem~\ref{teo:firstInclusion} shows that $\mathcal{T}(Sev_n^2)$ is contained in
the union of the cones of types $I$, $II$ and $III$. On the other side,
Theorem~\ref{teo:Imain} proves that cones of type I lie in $\mathcal{T}(Sev_n^2)$,
Theorems~\ref{teo:IImaina} and~\ref{teo:IImainb} show that cones of type II lie in $\mathcal{T}(Sev_n^2)$,
and finally, Theorem~\ref{teo:SufficiencyIII} gives the last inclusion of cones of type III in
the tropicalization of the Severi variety.

The article is structured as follows, In Section~\ref{sec:homogeneous} we present some general
results concerning the tropicalization of homogeneous linear ideals over an infinite valuated ring
and we give a characterization using maximal minors of an associated matrix in Theorem~\ref{teo:matrix-by-minors}.
In Section~\ref{sec:cones}, some known results and notation are stated and a detailed description of cones of type $I$, $II$ and $III$ is given.
We prove in Section~\ref{sec:necessary} that $\mathcal{T}(Sev_n^2)$ is contained in the union of the cones of type $I$, $II$ and $III$,
while Section~\ref{sec:sufficient} deals with the converse inclusion. Finally, we include three separated appendices for the convenience to the reader.
The first one contains a technical result about linear spaces and circuits. The second one contains a set of results about minors of the matrices that
appear naturally when studying $Sev_n^2$. The last Appendix contains code that has been used to verify some claims along the text.

Note that our approach in Section~\ref{sec:homogeneous} is very general (independent of $n$ and $\delta$)
and could be used to describe the case of any number $\delta$ of nodes. However, we restricted our attention
to the particular case $\delta=2$, which highlights the new phenomena that
occur in type III cones and that already requires many technical results.
Our approach is used in Example~\ref{ex:Esterov} to clarify the two dimensional case worked out in~\cite{Esterov15}.

\section{Results on homogeneous linear ideals}\label{sec:homogeneous}

In this section we present some basic results about the tropicalization of linear homogeneous
ideals over a valuated field. We refer the reader to the book~\cite{MS14} for basic
results on valuations and tropicalizations.

Along the paper, $\K$ will denote an algebraically closed field
with a rank one non-archimedean valuation
\[\val :\K^*\rightarrow \R\]
such that both $\K$ and its residue field $K$ are of
characteristic zero. This is equivalent to the fact that integers have valuation $0$.
The value group $\Gamma$ is dense in $\R$ and possibly multiplying $\val$ by a constant,
we may assume that $\Gamma$ contains the rationals.
As $\K$ is algebraically closed, there exists
a split of the valuation. That is, a multiplicative subgroup
$\{t^a\ : \ a \in \val(\K^*)\}\subseteq \K^*$ isomorphic to $\Gamma$.

 If $b\in \K^*$ with $\val(b)=v$,
we will usually write $b=\beta t^v+*\in \K, \beta \in K^*$, to distinguish the
lowest term. The reader may assume for simplicity that $\K=\C\{\!\{t\}\!\}$,
the field of Puiseux series with its usual valuation, and $K=\C$.

Given a polynomial $g = \sum_{\alpha \in \N^{n+1}} g_\alpha x^\alpha \in \K[x_0, \dots, x_n]$ (or in the Laurent polynomial ring
$\K[x_0^{\pm 1}, \dots, x_n^{\pm 1}]$), the {\em tropicalization} $trop(g)$ of $g$ is the piecewise
linear function on $\R^{n+1}$ defined by:
\[trop(g)(w) \, = \, {\rm min} \{ \val(g_\alpha) + \langle w,\alpha \rangle \, : \, g_\alpha \neq 0\},\]
and its zero set $V(trop(g)) \subset \R^{n+1}$ is given by those $w$ for which the minimum in $trop(g)$ is attained
at least twice. The tropicalization $\T(I)$ of an ideal $I$ in the Laurent polynomial ring is defined as the intersection
of the zero sets $V(trop(g))$ for any nonzero $g \in I$.
The tropicalization $\T(X)$ of an algebraic subvariety $X$ of
the torus is the tropicalization of its ideal. Given an affine or projective irreducible variety,
its tropicalization is defined as the tropicalization of its intersection with the torus.
In fact, for any ideal $I$, its tropicalization $\T(I)$ can be given as a finite intersection, and any
finite subset of polynomials in $I$ which suffice to describe $\T(I)$ is called a tropical basis of $I$ (\cite{B+07}).


Lemma~\ref{lemma:minimal} below  tells us that any
linear homogeneous ideal has
a tropical basis formed by circuits, as in the well known case of
a trivial valuation.
For the convenience of the reader, we include
in the Appendix a Gr\"obner free proof, which is adapted from
Lemma~3.12 and Theorem~3.13 in~\cite{Tabera}.

We first recall the definition of circuits.

\begin{definition} Let $I$ be an homogeneous
linear ideal in $\K[x_0, \dots, x_n]$. A circuit in $I$ is a non-zero linear form $\ell \in
I$ with minimal support.
Let $M$ be a matrix in $\K^{d\times (n+1)}$. A circuit in $M$ is a
non-zero element $ r \in {\rm rowspan}(M)$ with minimal support.
\end{definition}
Note that there is a finite number of circuits in $I$ (up to a multiplicative
constant). Clearly, if we interpret the rows of
a matrix $M$ as coefficients of linear forms in variables $x_0,
\dots, x_n$, the circuits of $M$ coincide with the circuits of the
ideal $I(M)$ generated by these $d$ linear forms.

\begin{lemma}\label{lemma:minimal} Let $I \subset \K[x_0, \dots, x_n]$ be a homogeneous
linear ideal. Then
\[\mathcal{T}(I)=\bigcap_{\substack{\ell \in I \\ \ell \textrm{ circuit}}}V(trop(\ell)).\]
\end{lemma}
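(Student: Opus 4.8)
The plan is to prove the nontrivial inclusion $\mathcal{T}(I) \supseteq \bigcap_{\ell \text{ circuit}} V(trop(\ell))$; the reverse inclusion is immediate since every circuit lies in $I$. So fix a point $w \in \R^{n+1}$ that belongs to $V(trop(\ell))$ for every circuit $\ell \in I$, and let $g \in I$ be an arbitrary nonzero linear form; I must show that the minimum in $trop(g)(w)$ is attained at least twice. Since $I$ is a homogeneous linear ideal, $g$ is a $\K$-linear combination of the circuits of $I$, but more usefully I will argue directly on the support: among all nonzero elements of $I$ whose support is contained in $\mathrm{supp}(g)$, pick one, say $\ell$, with support minimal under inclusion. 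By definition $\ell$ is a circuit, and $\mathrm{supp}(\ell) \subseteq \mathrm{supp}(g)$.

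The key step is then to show that I can ``subtract off'' the behaviour of $\ell$ from $g$ without increasing the support and while preserving the property of lying in $I$. Suppose, for contradiction, that the minimum in $trop(g)(w)$ is attained uniquely, at some index $i$. Because $w \in V(trop(\ell))$, the minimum of $\val(\ell_\alpha) + \langle w, \alpha\rangle$ over $\alpha \in \mathrm{supp}(\ell)$ is attained at least twice; in particular it is attained at some index $j \in \mathrm{supp}(\ell)$ with $j \neq i$ (if $i \notin \mathrm{supp}(\ell)$ this is automatic; if $i \in \mathrm{supp}(\ell)$ it follows since the min for $\ell$ is attained twice). Now form $g' = g - c\,\ell$ where $c \in \K^*$ is chosen so that the coefficient of $x_i$ in $g'$ vanishes: $c = g_i/\ell_i$. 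Then $g' \in I$, $\mathrm{supp}(g') \subseteq \mathrm{supp}(g) \setminus \{i\}$, and $g' \neq 0$ (for instance $\ell$ restricted to an index where $\ell$'s min is attained but which is $\neq i$ still contributes, or one uses that otherwise $g = c\ell$ and then the min of $trop(g)$ would be attained twice already). The point is that cancelling the coefficient at the unique minimizer strictly shrinks the support, so by descent (induction on $|\mathrm{supp}(g)|$, base case a support of size one being impossible for a nonzero element of a nontrivial homogeneous linear ideal, or handled directly) we may assume $g$ itself has minimal support among nonzero elements of $I$, i.e. $g$ is a circuit — and then $w \in V(trop(g))$ by hypothesis, contradiction.

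So the cleanest organization is: reduce by induction on the support size to the case where $g$ is a circuit. If $\mathrm{supp}(g)$ is already minimal, $g$ is a circuit and we are done. Otherwise there is a circuit $\ell$ with $\mathrm{supp}(\ell) \subsetneq \mathrm{supp}(g)$; pick $i$ a (say, the) index achieving the minimum in $trop(g)(w)$, and I need to arrange that $i \in \mathrm{supp}(\ell)$ so that the subtraction actually removes index $i$. Here one uses the circuit exchange property for linear matroids: given the circuit $C = \mathrm{supp}(g)$... wait, $g$ need not be a circuit; rather, I should take $\ell$ to be a circuit of $I$ with $i \in \mathrm{supp}(\ell) \subseteq \mathrm{supp}(g)$, which exists because the element of $I$ supported in $\mathrm{supp}(g)$ with minimal support containing $i$ is such a circuit (and $g$ itself witnesses that something in $I$ supported in $\mathrm{supp}(g)$ uses index $i$). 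Then $g' = g - (g_i/\ell_i)\ell \in I$ is nonzero with smaller support, still $\mathrm{supp}(g') \subseteq \mathrm{supp}(g)$, and crucially $trop(g')(w) \geq trop(g)(w)$ with the two minima of $g$ and $\ell$ at $w$ "interacting" so that the minimum value is unchanged at the remaining indices.

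The main obstacle is the bookkeeping of valuations under the subtraction $g' = g - c\ell$: one must verify that for every index $k \neq i$, one still has $\val(g'_k) + \langle w, k\rangle \geq trop(g)(w)$, and that the minimum for $g'$ at $w$ is attained at least twice — but the latter is exactly what the induction hypothesis delivers once we know $g' \neq 0$ and $\mathrm{supp}(g') \subsetneq \mathrm{supp}(g)$, provided $g' \neq 0$. The genuinely delicate point is non-vanishing of $g'$: it could happen that $g = c\ell$, in which case $g$ is already a circuit and there is nothing to prove; otherwise $\mathrm{supp}(g) \neq \mathrm{supp}(\ell)$ forces $g' \neq 0$. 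I expect this is precisely the kind of argument carried out in Lemma~3.12 and Theorem~3.13 of~\cite{Tabera}, so the proof deferred to the Appendix will make this descent rigorous, and the only valuation-theoretic input needed is that $\val$ is a valuation (ultrametric inequality) together with density of $\Gamma$, which plays no essential role here beyond ensuring the tropical variety is well-behaved.
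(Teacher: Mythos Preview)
Your proposal has two genuine gaps.

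First, and most seriously, you only argue that $w \in V(trop(g))$ for every \emph{linear form} $g \in I$. This would show $\bigcap_{\ell \text{ circuit}} V(trop(\ell)) \subseteq L(I) := \bigcap_{\ell \in I \text{ linear}} V(trop(\ell))$, not the required containment in $\mathcal{T}(I) = \bigcap_{g \in I,\, g \neq 0} V(trop(g))$ where $g$ ranges over \emph{all} nonzero polynomials. The equality $\mathcal{T}(I) = L(I)$ is the statement that the linear forms in $I$ form a tropical basis, and over a nontrivially valued field this is the main content of the paper's proof. The paper establishes it by a lifting argument: by induction on the codimension of $V(I)\cap(\K^*)^{n+1}$ one manufactures, for each $p \in L(I)$, an actual point of $V(I)\cap(\K^*)^{n+1}$ with valuation $p$ (this is where the hypothesis that the residue field is infinite enters, via a generic choice of scalar). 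Your sketch does not touch this step at all.

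Second, even your reduction from arbitrary linear forms to circuits is incomplete. You assume the minimum of $trop(g)$ at $w$ is attained uniquely at an index $i$, choose a circuit $\ell$ with $i\in\mathrm{supp}(\ell)\subseteq\mathrm{supp}(g)$, and set $g'=g-(g_i/\ell_i)\ell$, killing the coefficient at $i$. But then the inductive hypothesis yields only $w\in V(trop(g'))$, and there is no contradiction: a sum $g=g'+c\ell$ of two forms each with tropically balanced minimum can perfectly well have a uniquely attained minimum, because of cancellation in the leading terms. The paper's descent (run in the contrapositive) avoids this by eliminating an index \emph{other than} the unique minimizer: starting from $\ell_1:=g$ with unique minimizer $m_1$, one takes any $g_1\in I$ with $\mathrm{supp}(g_1)\subsetneq\mathrm{supp}(\ell_1)$; if $w\notin V(trop(g_1))$ one recurses on $g_1$, otherwise one picks $m_2\neq m_1$ where $trop(g_1)$ attains its minimum and sets $\ell_2=\ell_1-\bigl((\ell_1)_{m_2}/(g_1)_{m_2}\bigr)g_1$. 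An ultrametric check shows $\ell_2$ still has its minimum uniquely at $m_1$ and strictly smaller support, so the process terminates at a circuit with uniquely attained minimum, the desired contradiction. Your version eliminates precisely the index that needs to be preserved.
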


\begin{remark}\label{rmk:finitefield}
Lemma~\ref{lemma:minimal} holds under the weaker hypotheses that $\K$ is a valuated field
(not necessarily algebraically closed) with a rank one valuation
such that the residue field is infinite. In particular,
Theorem~\ref{teo:matrix-by-minors} below holds in the positive characteristic case and the $p$-adic case.

The hypothesis that the residue field is infinite cannot be
avoided. Let $\mathbb{F}_3$ be the field of three elements and
consider $\K=\mathbb{F}_3(t)$ with the usual valuation. Let $I$ be
the linear ideal generated by $x_1 - x_4 + x_6, x_2 - x_4 - x_6,
x_3 + x_4, x_5 + x_6$. Then $(0,0,0,0,0,0)\in trop(\ell)$, for
every linear form $\ell\in I$. However, there is no element in
$V(I)$ with coefficients in $\mathbb{F}_3(t)$ whose
tropicalization is $(0,0,0,0,0,0)$. On the other hand, in
$\overline{\mathbb{F}_3}(t)$, we have that the point
$(2+\alpha,1+\alpha,2\alpha,\alpha,2,1)\in V(I)$, with
$\alpha^2+1=0$. Its tropicalization is the desired point
$(0,0,0,0,0,0)$.
\end{remark}

Now we want to characterize the circuits of a matrix $M\in
\K^{d\times(n+1)}$ in terms of its minors.

\begin{notation}\label{not:M_J} Let $M$ be a matrix
in $\K^{d\times(n+1)} $. Let $J= \{i_1,
\dots, i_{s}\}\subseteq \{0, \dots, n\}$ with $ i_1<\dots <i_{s}$.
If we denote $C_0, \dots, C_n$ the ordered columns of the matrix
$M$, then $M_J$ is defined as the matrix in $\K^{d\times s}$ with
columns $C_{i_1}, \dots, C_{i_{s}}$. If $s=d-1$, then $r_J\in {\rm
rowspan}(M)$ is the vector in $\K^{(n+1)}$ defined by
$r_{J,k}= (-1)^{\mu(k,J)} \det(M_{J \cup \{k\}})$, where
$\mu(k,J)$ is the sign of the permutation of $J \cup \{k\}$ which takes $k$ followed by
the ordered elements of $J$ to the ordered elements of $J \cup \{k\}$,  for all $k \in \{0,
\dots, n\}$.
\end{notation}

Without loss of generality, $M$ can be assumed to be of maximal
rank. The following lemma is straightforward

\begin{lemma}\label{lemma:circuits}
Let $M\in \K^{d\times (n+1)}$ be a matrix of rank $d$ and
$J\subset \{0, \dots, n\}$ such that $\#J=d-1$ and
rank$(M_J)=d-1$. Then $r_J\in\K^{(n+1)} $ is a circuit of $M$.
Moreover, up to multiplicative constant, these are all the
circuits of $M$ (possibly repeated).
\end{lemma}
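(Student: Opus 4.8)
The plan is to prove the statement in three parts: first that each $r_J$ is a nonzero element of the row span; second that its support is minimal (so it is a circuit); and third that every circuit arises this way. For the first part, I would observe that $r_J$ lies in $\mathrm{rowspan}(M)$ by a standard Laplace/Cramer argument: each coordinate $r_{J,k} = (-1)^{\mu(k,J)}\det(M_{J\cup\{k\}})$ is, up to sign, a maximal minor of the $d\times(d-1+1)$ submatrix obtained by adjoining column $C_k$ to the columns indexed by $J$, and expanding such a determinant along the varying column $C_k$ exhibits $r_J$ as a fixed linear combination of the rows of $M$ (the coefficients being the $(d-1)\times(d-1)$ minors of $M_J$, which are not all zero since $\mathrm{rank}(M_J)=d-1$). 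In particular $r_J\neq 0$, and moreover $r_{J,k}=0$ for every $k\in J$ (the determinant has a repeated column), so $\mathrm{supp}(r_J)\subseteq\{0,\dots,n\}\setminus J$, a set of size $n+1-(d-1)=n-d+2$.

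For minimality of the support, suppose $\ell\in\mathrm{rowspan}(M)$ is nonzero with $\mathrm{supp}(\ell)\subsetneq\mathrm{supp}(r_J)$. Pick $k_0\in\mathrm{supp}(r_J)\setminus\mathrm{supp}(\ell)$; then $\ell$ is supported on the complement of $J\cup\{k_0\}$, a set of $n+1-d$ indices. But a nonzero element of the $d$-dimensional space $\mathrm{rowspan}(M)$ cannot be supported on fewer than $n+1-d$ coordinates unless $\mathrm{rank}(M_{J\cup\{k_0\}})<d$; and $\mathrm{rank}(M_{J\cup\{k_0\}})=d$ precisely because $r_{J,k_0}=\pm\det(M_{J\cup\{k_0\}})\neq 0$. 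This contradiction shows no such $\ell$ exists, so $r_J$ has minimal support and is a circuit.

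Finally, for the converse, let $\ell\in\mathrm{rowspan}(M)$ be any circuit, say $\ell=vM$ with $v\in\K^d$, and let $S=\{0,\dots,n\}\setminus\mathrm{supp}(\ell)$. Since $\ell$ is a nonzero element of a $d$-dimensional space, $\#\mathrm{supp}(\ell)\ge n+1-d$, equivalently $\#S\le d-1$; minimality of the support forces $\#\mathrm{supp}(\ell)= n+1-d$ exactly (otherwise one could shrink it — this uses that $\mathrm{rowspan}(M_{T})$ for a $(d-1)$-subset $T$ that does not drop rank produces a strictly smaller circuit, as in the previous paragraph), hence $\#S=d-1$. Choose any $J\subseteq S$ with $\#J=d-1$; since $\ell\neq 0$ vanishes on $J$ and on nothing else in $S$, the matrix $M_J$ must have rank $d-1$ (if it had rank $\le d-2$, there would be a nonzero row-span element vanishing on a $d$-subset, contradicting that $\mathrm{rowspan}(M)$ has trivial intersection with coordinate subspaces of dimension $\le n+1-d$ beyond the minimal ones). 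Then $r_J$ is a circuit with $\mathrm{supp}(r_J)\subseteq\{0,\dots,n\}\setminus J$, and since both $\ell$ and $r_J$ are circuits with $\mathrm{supp}(\ell)=\mathrm{supp}(r_J)$ (both equal the complement of $S=J$), they must be proportional — a one-dimensional family of linear forms with a fixed support of size $n+1-d$ inside a $d$-dimensional space. The main obstacle is the careful bookkeeping in the converse: matching up the support of an arbitrary circuit with a set $J$ of size exactly $d-1$ and verifying the rank condition $\mathrm{rank}(M_J)=d-1$, which is where the hypothesis $\mathrm{rank}(M)=d$ is genuinely used. The phrase "possibly repeated" in the statement simply acknowledges that different choices of $J\subseteq S$ can yield proportional $r_J$'s, so the list $\{r_J\}$ enumerates the circuits with multiplicity.
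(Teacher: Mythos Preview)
The paper omits the proof, calling the lemma straightforward, so there is no argument to compare against. Your first two parts are fine; the phrasing in the minimality step could be tightened to: any nonzero $\ell\in\mathrm{rowspan}(M)$ vanishing on $J\cup\{k_0\}$ would force $v^T M_{J\cup\{k_0\}}=0$ with $\det(M_{J\cup\{k_0\}})\neq 0$, a contradiction.

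The converse, however, has a genuine error. The claim ``since $\ell$ is a nonzero element of a $d$-dimensional space, $\#\mathrm{supp}(\ell)\ge n+1-d$'' is false, as is the conclusion $\#S=d-1$. Membership in a $d$-dimensional subspace of $\K^{n+1}$ imposes no lower bound on support, and circuits can have zero sets $S$ strictly larger than $d-1$ whenever the column matroid of $M$ is not uniform. Concretely, with $d=2$, $n=3$, and $M$ having columns $(1,0),(1,0),(1,0),(0,1)$, the vector $(0,0,0,1)\in\mathrm{rowspan}(M)$ is a circuit whose zero set $S=\{0,1,2\}$ has size $3>d-1=1$. Your identification $S=J$ then fails, and with it the claim $\mathrm{supp}(r_J)=\mathrm{supp}(\ell)$.

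The repair is to show $\mathrm{rank}(M_S)=d-1$ exactly: it is $\le d-1$ since $\ell=v^T M$ with $v\neq 0$ and $v^T M_S=0$; if it were $\le d-2$, two independent such $v$ would yield two independent row-span vectors supported inside $\mathrm{supp}(\ell)$, and a suitable linear combination would produce a nonzero element of strictly smaller support, contradicting minimality of $\ell$. Now choose any $J\subseteq S$ of size $d-1$ with $\mathrm{rank}(M_J)=d-1$. For $k\in S\setminus J$ one has $J\cup\{k\}\subseteq S$, hence $\mathrm{rank}(M_{J\cup\{k\}})\le d-1$ and $r_{J,k}=\pm\det(M_{J\cup\{k\}})=0$; thus $\mathrm{supp}(r_J)\subseteq\mathrm{supp}(\ell)$, and minimality of $\ell$ forces equality. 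Proportionality follows as you indicate.
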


We can now describe precisely the tropicalization
of the kernel of $M$.

\begin{theorem}\label{teo:matrix-by-minors}
Let $M\in \K^{d\times (n+1)}$ be a matrix of rank $d$
and  $w\in
\mathbb{R}^{n+1}$. Then $w\in
\mathcal{T}(ker(M))$ if and only if, for
every $J\subseteq \{0,\ldots, n\}$ with $d-1$ elements such that
$rank(M_J)=d-1$, the minimum of
\begin{equation} \label{eq:minors}
\left\{\val(\det(M_{J \cup \{k\}}))+w_k\quad |\quad  k \in \{0, \dots,
n\}\right\}
\end{equation}
is attained at least twice.
\end{theorem}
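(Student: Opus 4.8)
The plan is to combine the circuit description of $\mathcal{T}(\ker(M))$ with the explicit formula for circuits in terms of maximal minors. First I would observe that $\ker(M)$, viewed inside the Laurent polynomial ring, is exactly the homogeneous linear ideal $I$ whose linear forms are the rows of $M$ — more precisely, $\ker(M)$ as a linear subspace of $\K^{n+1}$ is the common zero set of the linear forms given by the rows, so its tropicalization $\mathcal{T}(\ker(M))$ in the sense used in the paper is $\mathcal{T}(I(M))$. By Lemma~\ref{lemma:minimal}, this equals $\bigcap_{\ell \text{ circuit}} V(trop(\ell))$, the intersection being over all circuits of $I(M)$, equivalently (by the remark following Notation~\ref{not:M_J}) over all circuits of the matrix $M$.

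Next I would invoke Lemma~\ref{lemma:circuits}: since $M$ has rank $d$, its circuits are, up to scalar and up to repetition, precisely the vectors $r_J$ with $J \subseteq \{0,\dots,n\}$, $\#J = d-1$, and $\operatorname{rank}(M_J) = d-1$. The scalar and the repetitions are harmless because $V(trop(\ell))$ depends only on the support pattern and the valuations of the coefficients of $\ell$ up to a common shift, hence is unchanged by multiplying $\ell$ by a nonzero constant. Therefore
\[
\mathcal{T}(\ker(M)) \;=\; \bigcap_{\substack{J \subseteq \{0,\dots,n\},\ \#J = d-1 \\ \operatorname{rank}(M_J) = d-1}} V(trop(r_J)).
\]
It then remains to unwind what $w \in V(trop(r_J))$ means. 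By Notation~\ref{not:M_J}, the $k$-th coordinate of $r_J$ is $r_{J,k} = (-1)^{\mu(k,J)}\det(M_{J\cup\{k\}})$, which is nonzero exactly when $\operatorname{rank}(M_{J\cup\{k\}}) = d$, i.e.\ when $k \notin \operatorname{span}$-closure of the columns indexed by $J$; the sign $(-1)^{\mu(k,J)}$ does not affect the valuation. Hence $trop(r_J)(w) = \min\{\val(\det(M_{J\cup\{k\}})) + w_k : k \in \{0,\dots,n\},\ \det(M_{J\cup\{k\}}) \neq 0\}$, and $w \in V(trop(r_J))$ precisely when this minimum is attained at least twice. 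Note that indices $k$ with $\det(M_{J\cup\{k\}}) = 0$ contribute the value $+\infty$ and so do not affect where the minimum lies; thus writing the set as in~\eqref{eq:minors} over all $k \in \{0,\dots,n\}$ (with the convention $\val(0) = +\infty$), or restricting to those $k$ with nonzero minor, gives the same condition. Quantifying over all admissible $J$ yields exactly the stated equivalence.

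The only genuinely delicate point is the matching of definitions: one must check that the ``tropicalization of $\ker(M)$'' intended in the statement is the tropicalization of the linear ideal generated by the rows of $M$ (so that Lemmas~\ref{lemma:minimal} and~\ref{lemma:circuits} apply directly), rather than, say, the tropicalization of the parametrized image — but these coincide for a linear subspace of the torus, since the ideal of linear forms vanishing on $\ker(M)$ is generated by the rows of $M$ when $M$ has rank $d$. Everything else is bookkeeping: tracking that scalars, repeated circuits, and vanishing minors are all immaterial to the locus where a min is attained twice. I do not expect a real obstacle here; the work is entirely in assembling the earlier lemmas.
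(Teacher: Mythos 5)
Your proposal is correct and follows essentially the same route as the paper: reduce $\mathcal{T}(\ker(M))$ to $\mathcal{T}(I(M))$, apply Lemma~\ref{lemma:minimal} to pass to circuits, identify those circuits via Lemma~\ref{lemma:circuits} as the vectors $r_J$ with entries $\pm\det(M_{J\cup\{k\}})$, and observe that signs (and vanishing minors, contributing $+\infty$) are immaterial to the locus where the tropical minimum is attained twice. The bookkeeping you flag explicitly (scalars, repetitions, the $\val(0)=+\infty$ convention, and the identification of $\ker(M)$ with the ideal generated by the rows) is all handled, if somewhat more tersely, in the paper's own proof.
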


\begin{proof}
Given $M$, let $I(M)$ be the homogeneous linear ideal generated by
the linear forms associated with the elements in ${\rm
rowspan}(M)$. For every linear form $\ell(x) = \sum_{i=0}^n \ell_i
x_i$ in $I(M)$, $w \in V(trop(\ell))$ if and only if the initial
form
\[\sum_{\substack{\val(\ell_i)+w_i
\\ \rm{minimal}}} \ell_ix_i \] is not a monomial, which is equivalent
to $\min\{\val(\ell_0)+w_0, \dots, \val(\ell_n)+w_n\}$ being
attained at least twice.

Because of the Lemma~\ref{lemma:minimal}, $ \mathcal{T}({\rm
ker}(M))=\mathcal{T}(I(M))=\bigcap V(trop(\ell))$ where the
intersection is over all circuits in $I(M)$. Then, $w \in
\mathcal{T}({\rm ker}(M))$ if and only if for every circuit
$r=(r_0, \dots, r_n) \in {\rm rowspan}(M)$,  there are at least
two elements in the set $\{i \in\{0, \dots, n\} \ : \ \val
(r_i)+w_i=\min \{\val (r_0)+w_0, \dots, \val(r_n)+w_n\}\}.$ The
result follows from Lemma~\ref{lemma:circuits} and the fact that
$\val(-r_i)=\val(r_i)$.
\end{proof}

As an example, we use Theorem~\ref{teo:matrix-by-minors} to
present a new explanation of the cones in the tropicalization of
the set of bivariate polynomials with two nodes given in Example
3.9 of \cite{Esterov15}.

\begin{example} \label{ex:Esterov} Consider the set
$\mathrm{A}=\{(0,0),(1,0),(1,1),(0,1),(-1,0),(0,-1)\}$ and the
variety $Sev^2_{\mathrm{A}}$, given as the (Zariski) closure of
the set
$$\{ c \in (\K^*)^6 \, : \,
f(x,y)= c_1+c_2x+c_3xy+c_4y+c_5\frac{1}{x}+c_6\frac{1}{y} \mbox{
has two nodes in } (\K^*)^2 \}.$$

Consider the subvariety $S \subset Sev^2_{\mathrm{A}}$ given by the closure of those
  $c=(c_1, \dots, c_6)$  with a node at the fixed point $(1,1)$ plus
  another node that we call
$(b_1,b_2) \in (\K^*)^2 \setminus \{(1,1)\}$.
Then, such a  $c$ satisfies $c \in \mbox{
ker}(M'), $ where
$${\small M' = {\left(\begin{array}{ccccrr}
   1   &     1    &     1      &    1     &   1 &  1 \\
   0   &     1    &     1      &    0     & -1 &  0 \\
   0   &     0    &     1      &    1     &  {}0 & -1 \\
   b_1b_2 & b_1^2b_2 & b_1^2b_2^2 & b_1b_2^2 & b_2 & b_1 \\
   0   &   b_1^2  &  b_1^2b_2  &    0     & -1 &  0 \\
   0   &     0    &  b_1b_2^2  &   b_2^2  &  {}0 & -1 \end{array}\right)}}.$$
As $\det(M')=b_1b_2(b_1-1)^2(b_2-1)^2(b_1b_2-1)$,
$M'$ has a non-trivial kernel if and only if $b_1=1$ or
$b_2=1$ or $b_2=\frac{1}{b_1}$. In these cases, it can be checked that rank$(M')=5$.

We will call $M \in \K^{5\times 6}$ the submatrix of $M'$ of maximal rank obtained
by removing the 5-th row if $(b_1,b_2) \in \{(-1,1),(-1,-1)\}$, the 6th row if
$(b_1,b_2)=(1,-1)$, and the 4-th row for very other $(b_1,b_2)$
where the determinant vanishes.
Applying Theorem~\ref{teo:matrix-by-minors} we have that $w\in
\mathcal{T}(ker(M))$ if and only if, for every $J\subseteq
\{1,\ldots, 6\}$ with $4$ elements and $rank(M_J)=4$, the minimum
of
\begin{equation}\label{eq:esterov}
\{\val(\det(M_{J \cup \{k\}}))+w_k\quad |\quad  k \in
\{1,\dots,6\}\setminus J\}
\end{equation} is attained at least twice. Because of the size of this example,
this can be translated into de fact that for both
$k_1,k_2 \in \{1, \ldots, 6\}\setminus J$, $\val(\det(M_{J \cup \{k_1\}}))+w_{k_1}=\val(\det(M_{J \cup \{k_2\}}))+w_{k_2}$.
Based on this theorem, we can easily discard $b_1=1$ or $b_2=1$.
For instance, if $b_2=1$, taking $J=\{1,2,3,4\}$ we obtain that the minimum in
Equation~(\ref{eq:esterov}) is only attained once at $k=6$.

We now describe all the cones in $\mathcal{T}(Sev^2_{\mathrm{A}})$
when $b_2=\frac{1}{b_1}$. As before, the case $(b_1,b_2)=(-1,-1)$
can be discarded by taking the set $J=\{2,3,4,5\}$ for which we obtain that the minimum in
Equation~(\ref{eq:esterov}) is only attained once at $k=1$.

For every $(b_1,b_2)\in(\K^*)^2-\{
(-1,-1),(1,1)\}$, let $M_i$ be the $5\times 5$ matrix obtained by
removing the $i$-th column from $M$. Then, we can compute
$$\det(M_1)= -\det(M_3)=\frac{(b_1-1)^2(b_1+1)^2}{b_1^2},\
\det(M_4)=\det(M_5)= - \frac{(b_1-1)^2(b_1+1)}{b_1} $$ $$\mbox{
and } \det(M_2)=-\det(M_6)= -\frac{(b_1-1)^2(b_1+1)}{b_1^2}.$$
Since none of them vanishes at $(b_1,b_2)$, every $J \subset \{1,
2,3,4,5,6\}$ with 4 elements satisfies that rank$(M_J)=4$.

When $\val(b_1)\ne 0$, using Theorem~\ref{teo:matrix-by-minors}
and considering the valuations of the determinants of the matrices $M_i$,
we obtain that $w \in \mathcal{T}(ker(M))$ if and only
if one of the following sets of equations is verified
$$ w_1=w_2=w_3=w_6<w_4=w_5 \mbox{ if } \val(b_1)>0$$
$$ w_1=w_3=w_4=w_5<w_2=w_6 \mbox{ if } \val(b_1)<0.$$
These points induce the subdivision of
the convex hull of $\mathrm{A}$ in Figure~\ref{fig:one}.
\vspace{-0.25cm}{\begin{figure}[hbtp]
\begin{tikzpicture} \draw[thin] (-1,0) -- (-0.5,0);
\draw[thin] (0.5,0) -- (1,0); \draw[thin] (0,-1) -- (0,-0.5);
\draw[thin] (0,0.5) -- (0,1); \draw[fill] (0,0) circle (2pt);
\draw[fill] (0.5,0) circle (2pt); \draw[fill] (0,0.5) circle
(2pt); \draw[fill] (-0.5,0) circle (2pt); \draw[fill] (0,-0.5)
circle (2pt); \draw[fill] (0.5,0.5) circle (2pt); \draw[thick]
(-0.5,0) -- (0,-0.5) -- (0.5,0) -- (0.5,0.5) -- (0,0.5) --
(-0.5,0); \draw[thick] (0.5,0.5) -- (0,0) -- (-0.5,0);
\draw[thick] (0,0) -- (0,-0.5);
\end{tikzpicture}
\caption{$\val(b_1)\ne 0$}\label{fig:one}
\end{figure}}

 When $\val(b_1)= 0$ and $b_1=\beta+ *$ where $\beta\ne -1$, we
obtain points in $\langle(1,1,1,1,1,1)\rangle$.

When $\val(b_1)= 0$ and $b_1=-1+ *$ ($b_1\ne -1)$, we obtain
points $w \in (\K^*)^6$ that induce the regular subdivision in
Figure~\ref{fig:no_hidden_curve}. \vspace{-0.25cm}{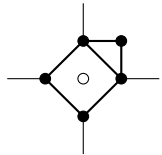
\begin{figure}[hbtp]
\begin{tikzpicture} \draw[thin] (-1,0) -- (-0.5,0);
\draw[thin] (0.5,0) -- (1,0); \draw[thin] (0,-1) -- (0,-0.5);
\draw[thin] (0,0.5) -- (0,1); \draw (0,0) circle (2pt);
\draw[fill] (0.5,0) circle (2pt); \draw[fill] (0,0.5) circle
(2pt); \draw[fill] (-0.5,0) circle (2pt); \draw[fill] (0,-0.5)
circle (2pt); \draw[fill] (0.5,0.5) circle (2pt); \draw[thick]
(-0.5,0) -- (0,-0.5) -- (0.5,0) -- (0.5,0.5) -- (0,0.5) --
(-0.5,0); \draw[thick] (0.5,0) -- (0,0.5);
\end{tikzpicture}
\caption{$\val(b_1)=0, \beta= -1$}\label{fig:no_hidden_curve}
\end{figure}}

This subdivision seems to be associated with the existence
of one node, but the existence of a second node imposes a ``hidden
tie'' from the relations $$ w_1=w_3>w_2=w_4=w_5=w_6.$$
Thus, we
 get the subdivision that Esterov
represents with the dotted segment representing the hidden tie depicted in Figure~\ref{fig:hidden_curve}.
\begin{figure}[hbtp] \label{fig:3}
\begin{tikzpicture} \draw[thin] (-1,0) -- (-0.5,0);
\draw[thin] (0.5,0) -- (1,0); \draw[thin] (0,-1) -- (0,-0.5);
\draw[thin] (0,0.5) -- (0,1); \draw[fill] (0.5,0) circle (2pt);
\draw[fill] (0,0.5) circle (2pt); \draw[fill] (-0.5,0) circle
(2pt); \draw[fill] (0,-0.5) circle (2pt); \draw[fill] (0.5,0.5)
circle (2pt); \draw[thick] (-0.5,0) -- (0,-0.5) -- (0.5,0) --
(0.5,0.5) -- (0,0.5) -- (-0.5,0); \draw[thick] (0.5,0) -- (0,0.5);
\draw[dashed] (0,0) -- (0.5,0.5);
\end{tikzpicture}
\caption{Hidden tie}\label{fig:hidden_curve}
\end{figure}
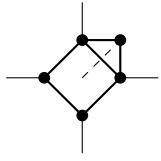

Indeed, ${\mathcal T}(S)$ is a tropical linear space of dimension
two. Now, we add the action of the linear space ${\rm
rowspan}(A)$, where $A$ is the $2\times 6$ matrix of exponents of
our configuration read in the second and third rows of $M'$ to get
$\mathcal{T}(Sev^2_{\mathrm{A}})$ as the union of the two cones
giving the subdivisions and hidden tie presented in Example 3.9 of
\cite{Esterov15}. These cones are explicitly $\mathcal
C_\diamond$, with $\diamond$ equal to $\le$ and to $\ge$:
\[\mathcal C_\diamond = \{ w \in \R^6 \, : \, w_1-w_3=w_5-w_4=w_6-w_2, 4w_1 \, \diamond \, w_2+w_4+w_5+w_6 \}.\]
In fact, $Sev^2_{\mathrm{A}}$ is in this case a toric variety, defined by the equations
$$c_2 c_5 - c_4 c_6=0, c_1 c_4 - c_3
c_5=0, c_1 c_2 - c_3 c_6=0.$$
\end{example}

\section{Defining the cones}\label{sec:cones}

In this section, we describe the cones  in $\mathcal{T}(Sev_n^2)$.

\subsection{Notation and basic facts}\label{sec:basic-facts}

Let $K$ be an algebraically closed field of characteristic zero, $n \in \N_{\ge 4}$, and
let $\mathcal U \subset  K^{n-1}$ be the open set defined by
\[\mathcal U \, = \, \{(a, b, a_1,\dots, a_{n-3})\in (K^*)^{n-1} \, : \, a,b, a_1, \dots, a_{n-4} \text{ are all different}
 \}.
\]
The Severi variety $Sev_n^2(K)$   is
the Zariski closure of the image of the map $\varphi_K$ defined by:
\[\begin{array}{rcl}
\mathcal U & \longrightarrow & K^{n+1} \\
 (a_1, a_2, a_3,\dots, a_{n-1}) & \mapsto & (c_0, \dots, c_n),
\end{array}\]
where $c= (c_0, \dots, c_n)$ denotes the vector of coefficients of the polynomial
\begin{equation}\label{eq:fc}
 f \, = \, \sum_{i=0}^n c_i \, x^i \, = \, a_{n-1} (x-a_1)^2 (x-a_2)^2 \prod_{i=3}^{n-2} (x-a_i).
\end{equation}
Thus, $I(Sev_n^2(K))$ is defined over $\Q$.

Let $\K$ be an algebraically closed valuated extension of $K$, with residue field $K$.
For any $c$ in the torus $(\K^*)^{n+1}$, we denote by $\val(c) = (\val(c_0),\dots, \val(c_n))$ and
we will identify $c$ with the univariate polynomial $f$ with coefficients $c$.
The image $\val((\K^*)^{n+1}) \subset \R^{n+1}$ will be denoted by ${\rm {Im}}_{\val}$.
Also, for any $w \in  {\rm {Im}}_{\val}$ we denote by $\K[x]_w$ the set of polynomials $f$ with $w=\val(c)$.

By Theorems~3.2.2 and ~3.2.4 in \cite{MS14},
$$\mathcal{T}(Sev_n^2(K)) \, = \, \mathcal{T}(Sev_n^2(\K)) \, = \, {\rm closure}
\{ \val(c), c \in {\rm Im}(\varphi_{\K}) \cap (\K^*)^{n+1} \}.$$
As the field is not essential,  we will denote in what follows the
tropicalization of our Severi varieties by $\mathcal{T}(Sev_n^2)$.

\begin{definition} \label{def:matrixM(x)}
Let $\mathbf{M}\in (\Z[x])^{4\times (n+1)}$ be the matrix
$$\mathbf{M}=\begin{pmatrix} 1 & 1 & 1 & \dots & 1 \\ 0
& 1 & 2 & \dots & n \\ 1 & x & x^2 & \dots & x^n \\ 0 & x & 2x^2 &
\dots & nx^n
\end{pmatrix}.$$
\end{definition}

Let $J=\{i_1, i_2, i_3, i_4\}$ be a subset of $ \cA$ with
$i_1<i_2<i_3<i_4$. Let $\mathbf{M}_J$ be the submatrix of $\M$ as in
Notation~\ref{not:M_J}, and denote  $D_J = \det(\M_J)\in
\Z[x]$.

If $f\in \K[x]$ has double roots $a_1, a_2 \in \K^*$ with
$\val(a_1)\leq \val(a_2)$, then $f(a_1x)$ has double  roots $1$,
$b= a_2/a_1$ and $\val(b)\geq 0$. Hence, we can assume that $f$
has nodes at $1$ and $b$, where $b \not=0,1$ and $\val(b)\ge 0$.
Clearly, $f$ has nodes at $1$ and $b$ if and only if its vector of
coefficients $c$ lies in the kernel of the matrix $\mathbf M(b)$.

Expression~(\ref{eq:minors}) in Theorem~\ref{teo:matrix-by-minors} explains why
it is not enough to consider the combinatorial information given
by $w \in \mathcal{T}(ker(\M(b))$, as the picture is completed by
the valuations of the maximal minors of $\M(b)$, which depend on
arithmetic conditions on the chosen columns $J$ and algebraic
conditions on $b$.  Many technical properties of these matrices
and, particularly, of its $4\times 4$ minors are studied in
Appendix~\ref{app:PropertiesM}.

\medskip

\subsection{Cones of types $I$, $II$ and $III$}

We now give the precise definitions of cones of type I, II and III.
The tropical discriminant $\mathcal{T}(\Delta_n)$, that is, the
tropicalization of the variety of polynomials $f\in \K[x]$ of
degree $n$ with a double
root is known to consist of maximal dimensional cones of the secondary fan of the configuration $\mathcal{A}$.
Every cone of $\mathcal{T}(\Delta_n)$ corresponds naturally with a
marked subdivision of  $\mathcal{A}$. The maximal cones of
$\mathcal{T}(\Delta_n)$ (which have dimension $n$) correspond to the subdivisions of
$\mathcal{A}$ that have exactly one marked cell with exactly one
marked point. We refer to \cite{DFS07}, \cite{GKZ}, \cite{DT12}
for basic results on tropical discriminants.
On the other hand, we will see that not every $n-1$ dimensional cone given by
the intersection of two maximal cones in
$\mathcal{T}(\Delta_n)$ belongs to $\mathcal{T}(Sev_n^2)$, as there are exceptional configurations
(see Proposition~\ref{prop:exceptionalConfig}).

\begin{remark} \label{rem:1or2markedPoints}
Let $w\in\mathcal{T}(Sev_n^2)$. As $w$ is in the discriminant
variety, the subdivision induced by $w$ has at least one marked
cell. Moreover, if $w$ is a point in the interior of a maximal
cone of $\mathcal{T}(Sev_n^2)$, the Newton diagram has at most two
marked points because the maximal cones have codimension 2 and
every marked point in the Newton diagram imposes a linear
restriction that lowers the dimension.
\end{remark}

Given a configuration $J=\{i_1,\dots,i_k\}$ and $s \in \Z$, we
will use the notation $J(+s)$ for the set $\{i_1+s,\dots,i_k+s\}$
(called a {\em translation}) and $J(\times s)$ (when $s \in \N$)
for the set $\{s \, i_1,\dots,s \, i_k\}$.

\begin{definition}\label{def:I}
Consider a regular subdivision $\Pi$ with exactly two different
marked cells (each with only one marked point). Let $C_{\Pi}$ be
the closure of the set of $w\in \R^{n+1}$ with $\Pi_w=\Pi$. Then,
$C_{\Pi}$ is a cone and we say that it is a cone of \emph{type} $I$.
\end{definition}

\begin{example}\label{ex:typeI}
Let $n=5$, $\K=\C\{\!\{t\}\!\}$ with the usual valuation and
$$ f = t(x-1)^2(x-t)^2(x-t^{-1}).$$
Them $f \in \K[x]_w$, where $w=(2,1,0,0,0,1)$.
The Newton diagram of $w$ is displayed on the Left in Figure~\ref{fig:3types}.
Thus, $w$ lies in the
interior of the cone of type $I$ induced by the marked subdivision
$\{\{0,1,2\},\{2,3,4\},\{4,5\}\}$.
\end{example}

To introduce cones of type $II$, we need to specify what we understand by an
exceptional configuration.

\begin{definition}\label{def:exceptional configurations}
A marked cell  $\{i_1,i_2,i_3,i_4\}$ with two marked points  is
called an exceptional configuration if it is one of the following:
$\{0,1,2,3\}$, $\{0,1,2,4\}$, $\{0,2,3,4\}$, $\{0,3,4,6\}$ and
$\{0,2,3,6\}$.
\end{definition}

We now define type $II$ cones:

\begin{definition}\label{def:II}
Consider a regular subdivision $\Pi$ with exactly one marked cell $\{i_1,i_2,i_3,i_4\}$
with two marked points, which
 is not the image under a translation of an
exceptional configuration. Let $C_{\Pi}$ be the closure of the set
of $w\in \R^{n+1}$  with $\Pi_w=\Pi$. Then, $C_{\Pi}$ is a cone and we
say that it is a cone of \emph{type} $II$.
\end{definition}

\begin{example}\label{ex:typeII}Let {$n=5$,} $\K=\C\{\!\{t\}\!\}$ with the usual valuation
and
$$ f= (x-1)^2(x+2+\sqrt3+t)^2(x-t^2).$$
Then $f \in \K[x]_w$, with $w=(2,0,0,1,0,0)$. The Newton Diagram
of $w$ is displayed in the Center in Figure~\ref{fig:3types}.
Hence, $w$ lies in the interior of the cone of type $II$ induced
by the marked subdivision $\{\{0,1\}, \{1,2,4,5\}\}$.

Note that if the double root different from 1 has valuation 0 and
independent coefficient other than $-2\pm \sqrt{3}$ and -1, the
vector of valuations of the coefficients of the polynomial is in a
cone of type $II$, but not in its interior. See
Remark~\ref{rem:exTypeIIminors} for an expanded explanation of the
choice of $-2\pm \sqrt{3}$.
\end{example}

Type $III$ cones are \emph{not} cones of the secondary fan of $\cA$ as they also show additional arithmetic constraints:

\begin{definition}\label{def:III}
Let $\sigma = \{i_1, i_2, i_3\}$ and $\tau=\{j_1, j_2\}$ with
$i_1,i_2,i_3,j_1,j_2$ different points of $\cA$, which satisfy the
following restrictions: $g=gcd(i_3-i_1,i_2-i_1) > 1$, and there
exists a divisor $ d> 1$ of $g$ which does \emph{not} divide
$j_1-i_1$ nor $j_2-i_1$. Moreover, let $\Pi$ be a regular
subdivision of $\cA$ with $\sigma$ as the unique marked cell. Let
$C= C(\Pi, \tau)$ be the closure of the set of $w\in \R^{n+1}$
which verify:
\begin{itemize}
 \item[i)] $\Pi_w=\Pi$.
\item[ii)]  Let $\eta$ be the interior normal of the
lower facet of $\Gamma_w$ corresponding to the marked cell. The minimum value of the scalar products $\langle \eta, (j,w_j)
\rangle$ over all $j$ with $j-i_1 \not\equiv 0 \mod d$,
is attained at $j_1, j_2$.
\end{itemize}
Then, $C$ is a cone and we say it is a cone of \emph{type}
$III$ with a \emph{hidden tie} in $\tau$.
\end{definition}

\begin{remark}\label{rem:notprime}
 Note that it is not enough to ask $d$ to be a prime number in Definition~\ref{def:III}. Consider for instance
 the case $\sigma = \{0, 4,8\}$, $\tau=\{2,5\}$, $g=4$.  As we require that $d$ does not divide $2-0$ and $5-0$,
 $d=4$ is the only possible value of a divisor $d>1$ of $g$ to get a cone of type III.
\end{remark}

\begin{example}\label{ex:typeIII}
Let $n=5$, $\K=\C\{\!\{t\}\!\}$ with the usual valuation and
$$ f = (x-1)^2(x+1+t)^2(x-t^2).$$ Then,
$f \in \K[x]_w$, where $w=(2,0,1,0,1,0)$. We have displayed the
Newton Diagram of $w$ and the \emph{hidden tie} on the Right in
Figure~\ref{fig:3types}. Thus, $w$ lies in the cone of type $III$
induced by the marked subdivision $\Pi=\{\{0,1\},\{1,3,5\}\}$ and the
hidden tie in $\tau=\{2,4\}$.
\end{example}

\section{Necessary conditions}\label{sec:necessary}

In this section, we will show that $\mathcal{T}(Sev_n^2)\subset \R^{n+1}$ is contained in the
union of all cones of types $I$, $II$ and $III$, which proves one of the inclusions in the statement
of Theorem~\ref{teo:main}:

Given $w \in {\rm Im}_{\val}$, we first state some results that
relate the roots of a polynomial $g\in \K[x]_w$, and roots of its
associated residual polynomials in $K[x]$. For an element $a$, we
denote by mult$(a,g)$ the multiplicity of $a$ as a root of $g$.

\begin{definition}\label{def:initial}
Given $w \in\R^{n+1}$, let $\sigma$ be a cell in $\Pi_w$. For $g =
\sum_{i=0}^n d_ix^i \in \K[x]_w$, write $d_i=\delta_it^{w_i}+*$
(so all $\delta_i\neq 0$). We say that $g_{\sigma}=\sum_{i\in
\sigma}\delta_ix^i\in K[x]$ is the residual polynomial of $g$ with
respect to $\sigma$.
\end{definition}

The following proposition is a
generalization of the classical Newton-Puiseux theorem on the valuation of the
roots of a polynomial when the polynomial has multiple roots. It is a direct consequence
of results in \cite{TesisTabera} and \cite{DDR13}.

\begin{proposition} Let $g=\sum_{i=0}^n d_ix^i$ be a
polynomial in $\K[x]$ with $d_0\neq 0 \neq d_n$ and let $a_1, \dots,
a_n$ be the roots of $g$ (repeated according to their
multiplicities). Let $w\in \R^{n+1}$ be the vector $w=\val (d)$.
Using notation from Definition~\ref{def:initial},
\begin{itemize} \label{prop:ResultadosAliciaLuis}
\item [i)] There exists $\sigma$ a cell of $\Pi_w$ such that
the lower facet in the Newton Diagram of $w$ that induces $\sigma$
has lattice length $\ell$ and slope $-v$, if and only if the set
$J_{\sigma}= \{j \in \{1, \dots, n\} \ : \ \val (a_j)=v\}$ has
cardinal $\ell>0$.
\item [ii)] Denote $a_j=\alpha_jt^v+*$ for all $j \in J_{\sigma}$.
Then $\{\alpha_j \ : \ j \in J_{\sigma}\}$ is the set of nonzero
roots of $g_{\sigma}$ and, for all $ j \in J_{\sigma}$, ${\rm
mult}(\alpha_j,g_{\sigma})=\sum{\rm mult}(a_{k},g)$ where the sum
is over all $k\in J_{\sigma}$ such that $\alpha_{k}=\alpha_j$.
\item [iii)] For all $j \in J_{\sigma}$,
both the multiplicity of $a_j$ as a root of $g$ and the
multiplicity of $\alpha_j$ as a root of $g_{\sigma}$ are strictly
smaller than the cardinal of the cell $\sigma$.
\end{itemize}
\end{proposition}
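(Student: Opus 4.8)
The plan is to reduce everything to the classical Newton--Puiseux theorem applied successively to the ``slope pieces'' of $g$, keeping careful track of multiplicities. First I would recall the geometric picture: since $d_0 \neq 0 \neq d_n$, the Newton diagram of $w = \val(d)$ is a genuine lower convex polygonal path from $(0,w_0)$ to $(n,w_n)$ whose facets $\Gamma_1,\dots,\Gamma_s$ have strictly increasing slopes $-v_1 > \dots > -v_s$ and lattice lengths $\ell_1,\dots,\ell_s$ with $\sum \ell_i = n$. The cells $\sigma_i$ of $\Pi_w$ are the projections of the $\Gamma_i$. The classical Newton--Puiseux theorem tells us that the multiset of valuations of the $n$ roots $a_1,\dots,a_n$ of $g$ is exactly $\{v_i \text{ with multiplicity } \ell_i\}$; this already gives item i), reading off that $\sigma$ with facet of slope $-v$ and lattice length $\ell$ exists in $\Pi_w$ iff $J_\sigma = \{j : \val(a_j)=v\}$ has cardinality $\ell>0$.

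For item ii) I would argue as follows. Fix a facet $\Gamma_i$ with slope $-v$, so $v = v_i$. Write $g = d_n \prod_{j=1}^n (x-a_j)$ and factor according to valuation of roots: $g = \prod_{k} g^{(k)}$ where $g^{(k)} = d_n^{[k]}\prod_{\val(a_j)=v_k}(x-a_j)$ up to constants, i.e.\ group the linear factors by the valuation of their root. A direct valuation computation (or the standard proof of Newton--Puiseux) shows that $\val$ of the coefficients of the degree-$\ell_i$ polynomial $h := \prod_{\val(a_j)=v}(x-a_j)$ has Newton diagram a single segment of slope $-v$, and that the residual polynomial $g_\sigma$ (the lowest-order part of $g$ along $\Gamma_i$, normalized) coincides, up to a nonzero constant and a monomial shift $x^{m}$ corresponding to the left endpoint of $\sigma$, with the residual polynomial of $h$ along its own (single-facet) Newton diagram. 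Substituting $a_j = \alpha_j t^v + *$ into $h$ and extracting the lowest-order terms yields precisely $\prod_{\val(a_j)=v}(x - \alpha_j)$ up to the same normalization; hence the nonzero roots of $g_\sigma$ are exactly the $\alpha_j$, $j \in J_\sigma$, and the multiplicity of a value $\alpha$ as a root of $g_\sigma$ is the number of $j \in J_\sigma$ with $\alpha_j = \alpha$, counted with mult$(a_j,g)$. This is the content of ii); item iii) is then immediate, since any root of $g$ of multiplicity $\geq \#\sigma = \ell_i+1$ would force $h$ to have degree $> \ell_i$, and likewise a root of $g_\sigma$ of multiplicity $\geq \#\sigma$ would force $\deg g_\sigma \geq \ell_i + 1$, contradicting $\deg g_\sigma \le \ell_i$ (with equality only when the right endpoint of $\sigma$ is marked or is the last cell).

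The main obstacle I anticipate is the bookkeeping in ii): making precise the claim that the residual polynomial of the full $g$ along $\Gamma_i$ equals (up to monomial and scalar) the residual polynomial of the isolated slope-$v$ factor $h$. The clean way is to observe that the other factors $g^{(k)}$, $k \neq i$, contribute along $\Gamma_i$ only through their lowest- or highest-degree term (depending on whether $v_k < v$ or $v_k > v$), which is a monomial times a unit; multiplying $h$ by such monomials-times-units shifts the Newton diagram horizontally and scales the residual polynomial by the corresponding residues, but does not alter which value is a root of $g_\sigma$ nor its multiplicity. Since this is exactly the mechanism behind the proofs in \cite{TesisTabera} and \cite{DDR13}, I would invoke those references for the technical core and only spell out the multiplicity transfer, which is what items ii) and iii) add beyond the scalar Newton--Puiseux statement.
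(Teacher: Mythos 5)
Your treatment of items i) and ii) follows essentially the same route as the paper: the paper defers i) and ii) to Propositions~1.8 and 1.9 of \cite{TesisTabera}, and your ``factor $g$ according to the valuation of its roots, then compare residual polynomials along each slope'' argument is precisely the mechanism underlying those references, with the multiplicity bookkeeping spelled out a bit more. That part is fine.

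Item iii), however, has a genuine gap. You assert $\#\sigma = \ell_i + 1$, but this is false in general: $\sigma$ consists only of those $j$ in $[a,b]$ for which $(j,w_j)$ lies \emph{on} the facet $\Gamma_i$, so $\#\sigma$ ranges from $2$ up to $\ell_i + 1$, with $\#\sigma < \ell_i+1$ whenever some intermediate lattice point is unmarked. Your degree argument therefore only yields $\tau \le \ell_i$ (multiplicity at most the lattice length), which is strictly weaker than the claim $\tau < \#\sigma$. For instance, if $\sigma = \{a,b\}$ with $\ell_i = b-a$ large, iii) asserts the root of $g_\sigma$ is simple ($\tau < 2$), whereas your bound only gives $\tau \le \ell_i$. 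The correct statement requires the nontrivial fact that a polynomial with $m$ nonzero terms can have a nonzero root of multiplicity at most $m-1$ (a confluent-Vandermonde / generalized-Wronskian fact, or the inductive $xp' - e_1 p$ argument). The paper gets this via Theorem~5.6 of \cite{DDR13}: if $\alpha_j$ is a root of $g_\sigma$ of multiplicity $\ge\tau$, then removing any $\tau$ elements of $\sigma$ cannot leave a monomial, forcing $\#\sigma \ge \tau+1$. You would need to either invoke that theorem, or prove the sparse-polynomial multiplicity bound directly, and then combine with item ii) exactly as the paper does. The parenthetical remark about $\deg g_\sigma$ and ``the right endpoint of $\sigma$ is marked'' is also confused: both endpoints of the facet always belong to $\sigma$ by definition, since they are vertices of the Newton diagram.
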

\begin{proof}
Items i) and ii) were proved in Proposition~1.8 and
Proposition~1.9 on \cite{TesisTabera} respectively.

To see item iii), Theorem~5.6 in \cite{DDR13} states in the case
of univariate polynomials that, if we fix $ \kappa \in \N$, $g$
has a root of multiplicity at least $\kappa+1$ if and only if
there exists $w \in \R^{n+1}$ with the following property: for any
$J \subset \cA$ with $\# J\le \kappa$, the minimum of the scalar
products $\langle w, \cdot \rangle$ is attained at least twice on
$\cA \setminus J$. Using this result and item ii), if there is a
root $a_j$ of $g$ of multiplicity $\tau$, $\alpha_j$ is a root of
$g_\sigma$ of multiplicity at least $\tau$, and therefore removing
any subset $J\subset \sigma$ of $\tau$ elements, $\sum_{i \in
\sigma-J} \delta_ix^i$ cannot be a monomial. It follows that the
the cardinal of $\sigma$ has to be at least $\tau+1$.
\end{proof}

As a straightforward consequence of
Proposition~\ref{prop:ResultadosAliciaLuis}, we see that:

\begin{corollary}
Let $w\in \R^{n+1}$ be a point such that the subdivision $\Pi_w$
has only one marked cell of lattice length 3. Then, $w$ does not
lie in $\mathcal{T}(Sev_n^2)$.
\end{corollary}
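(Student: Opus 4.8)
The plan is to argue by contradiction: suppose $w \in \mathcal{T}(Sev_n^2)$ and that the subdivision $\Pi_w$ has a unique marked cell, which moreover has lattice length $3$. Since $w \in \mathcal{T}(Sev_n^2) \subset \mathcal{T}(\Delta_n)$ is in ${\rm Im}_{\val}$, there is a polynomial $g \in \K[x]_w$ with two distinct double roots $a_1, a_2 \in \K^*$; after the normalization already used in Section~\ref{sec:basic-facts} we may take one root to be $1$ and the other to be $b$ with $\val(b) \ge 0$, so $g$ has (at least) four roots counted with multiplicity landing in the combinatorial picture of $w$. The idea is to feed this into Proposition~\ref{prop:ResultadosAliciaLuis}.

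First I would identify which cell of $\Pi_w$ ``sees'' the two double roots. Let $\sigma$ be the marked cell; by Proposition~\ref{prop:ResultadosAliciaLuis}(i), each lower facet of the Newton diagram of lattice length $\ell$ and slope $-v$ corresponds to the set $J_v = \{j : \val(a_j) = v\}$ of roots of that valuation, with $\#J_v = \ell$. A cell that is \emph{not} marked corresponds to a facet in whose relative interior no point $(j,w_j)$ lies; by part (ii) of the Proposition the residual polynomial $g_\sigma$ over such a non-marked cell has only \emph{simple} nonzero roots (its Newton polygon over that edge is a single segment with no interior lattice point carrying a monomial, forcing all residual roots distinct). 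Hence any double root $a_j$ of $g$ with $\val(a_j) = v$ must sit over a \emph{marked} cell: indeed by (ii), ${\rm mult}(\alpha_j, g_\sigma) \ge {\rm mult}(a_j, g) = 2$, so $g_\sigma$ has a multiple root, which is impossible over a non-marked edge. So both double roots $1$ and $b$ correspond to marked cells of $\Pi_w$. Since by hypothesis there is only one marked cell, we conclude $\val(b) = \val(1) = 0$ and both double roots sit over the \emph{same} marked cell $\sigma$, of lattice length $3$, i.e.\ $\#\sigma = 4$.

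Now I apply Proposition~\ref{prop:ResultadosAliciaLuis}(iii): for every root $a_j$ with $\val(a_j) = v$, the multiplicity of $a_j$ in $g$ is strictly smaller than $\#\sigma$. But the residual polynomial $g_\sigma$ has degree $\le 3$ (lattice length $3$), its nonzero roots are exactly the $\alpha_j$ for $j \in J_\sigma$, and among these we have at least two roots of multiplicity $\ge 2$ in $g_\sigma$ — namely the residues of $1$ and of $b$, which by part (ii) have multiplicity $\sum {\rm mult}(a_k,g) \ge 2$ each. A polynomial of degree $3$ cannot have two distinct roots each of multiplicity $\ge 2$ (that would already force degree $\ge 4$), unless the two residues coincide; but even then the common residue would be a root of multiplicity $\ge 4 > 3 = \deg g_\sigma$ if $1$ and $b$ have the same residue, or else we have two genuinely distinct residual roots of multiplicity $\ge 2$. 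Either way we contradict $\deg g_\sigma \le 3$ together with the multiplicity bound in (iii) (the cardinal $\#\sigma = 4$ would have to exceed $1 + (\text{total multiplicity contributed}) \ge 1 + 4 = 5$). This contradiction shows no such $w$ lies in $\mathcal{T}(Sev_n^2)$.

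The only real subtlety — the step I expect to require the most care in writing up — is the clean bookkeeping of multiplicities in the degenerate sub-case where the two double roots $1$ and $b$ have the \emph{same} residue $\alpha$ modulo higher-order terms: then $g_\sigma$ has $\alpha$ as a root of multiplicity $\ge 4$ by the additivity in part (ii), which instantly violates $\deg g_\sigma \le 3$; whereas if the residues differ one must phrase the length count so that two distinct residual roots of multiplicity $2$ force $\deg g_\sigma \ge 4$. Both horns are elementary, but one should state explicitly that Proposition~\ref{prop:ResultadosAliciaLuis}(ii)–(iii) are being invoked for the \emph{single} marked cell, using that $w \notin$ the set of weights with two marked cells or with a marked cell of length $\ge 4$ by the hypothesis of the corollary. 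No arithmetic beyond degree counting is needed.
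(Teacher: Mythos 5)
Your proof is correct, and it is exactly the argument the paper intends (the paper states this as a "straightforward consequence" of Proposition~\ref{prop:ResultadosAliciaLuis} without elaborating): both double roots must sit over the single marked cell (since a non-marked cell supports only a binomial residual polynomial with simple nonzero roots), and then part~(ii) forces the residual polynomial over that cell to have at least four nonzero roots with multiplicity, while lattice length~$3$ bounds it by three. Two small slips that don't affect the argument: a marked cell of lattice length~$3$ has cardinal $3$ or $4$, not necessarily $4$; and the appeal to part~(iii) at the end is superfluous, since the degree/lattice-length count via part~(ii) already gives the contradiction in both the equal-residue and distinct-residue cases.
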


Another consequence of Proposition~\ref{prop:ResultadosAliciaLuis}
is the following:

\begin{corollary} \label{corol:differentResidue} Let $f
\in \K[x]_w$, with $w$ an element in the interior of a cone of
type $I$, $II$ or $III$. Let $a= \alpha t^{\val (a)}+*,b=\beta
t^{\val (b)}+*$ be two different multiple roots of $f$. If
$\val(a)=\val(b)$, then $\alpha \neq \beta $.
\end{corollary}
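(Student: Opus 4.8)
The plan is to argue by contradiction: suppose $f \in \K[x]_w$ has two distinct multiple roots $a = \alpha t^{\val(a)} + *$ and $b = \beta t^{\val(b)} + *$ with $\val(a) = \val(b) =: v$ and $\alpha = \beta$. First I would apply Proposition~\ref{prop:ResultadosAliciaLuis}(i): since $a$ and $b$ both have valuation $v$, the set $J_\sigma = \{ j : \val(a_j) = v \}$ has cardinality $\ell \ge 2$ (indeed $\ge 4$, counting multiplicities, since $a$ and $b$ are each at least double), so there is a cell $\sigma \in \Pi_w$ whose inducing lower facet has lattice length $\ell$ and slope $-v$. Because $f$ defines a point of the Severi variety, $f$ has two double roots; combined with the constraint that $w$ lies in the interior of a cone of type I, II or III, Remark~\ref{rem:1or2markedPoints} tells us the Newton diagram of $w$ has at most two marked points, and the cone descriptions in Definitions~\ref{def:I},~\ref{def:II},~\ref{def:III} pin down the combinatorial shape of $\Pi_w$ precisely (two marked segments; or one marked cell of length $\ge 3$ with two markings; or one marked cell of length $2$).

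Next I would invoke Proposition~\ref{prop:ResultadosAliciaLuis}(ii): the residual polynomial $f_\sigma$ has $\alpha$ (which equals $\beta$) as a nonzero root, and its multiplicity as a root of $f_\sigma$ is the sum of the multiplicities of all roots $a_k$ of $f$ with $\val(a_k) = v$ and $\alpha_k = \alpha$. Under the assumption $\alpha = \beta$, this sum includes both $\mathrm{mult}(a, f) \ge 2$ and $\mathrm{mult}(b, f) \ge 2$, hence $\mathrm{mult}(\alpha, f_\sigma) \ge 4$. Now I would apply Proposition~\ref{prop:ResultadosAliciaLuis}(iii): this multiplicity must be strictly less than $\#\sigma$, forcing $\#\sigma \ge 5$, i.e. the marked cell $\sigma$ has lattice length at least $4$. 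But this contradicts the combinatorial description of cones of type I, II and III: in type I the marked cells are segments (length with one marking, so at most length $2$ each in the sense that the markings impose $\#\sigma - 2 = 1$... more precisely each marked cell of type I has one marked point, so cardinality $3$), in type III the unique marked cell has cardinality $2$, and in type II the unique marked cell $\{i_1,i_2,i_3,i_4\}$ has cardinality exactly $4$ — in none of these cases can $\#\sigma \ge 5$.

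The one case requiring a little more care is type II, where the marked cell genuinely has cardinality $4$ and two marked points: here $\mathrm{mult}(\alpha, f_\sigma) \ge 4$ together with (iii) gives $4 < 4$, already a contradiction, so this case is in fact the cleanest. The main obstacle I anticipate is not any single hard computation but rather bookkeeping: one must be careful that "two distinct multiple roots with the same valuation and same leading coefficient" really does force the residual multiplicity of $\alpha$ to be at least $4$ (using that $a \ne b$ as elements of $\K$, so they are counted as genuinely distinct roots in the multiplicity sum of Proposition~\ref{prop:ResultadosAliciaLuis}(ii)), and that one has correctly read off $\#\sigma$ from each of the three cone definitions. Once the shape of $\Pi_w$ is extracted from Definitions~\ref{def:I}--\ref{def:III} and Remark~\ref{rem:1or2markedPoints}, the contradiction with Proposition~\ref{prop:ResultadosAliciaLuis}(iii) is immediate.
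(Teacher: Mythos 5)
Your argument is the paper's own: Proposition~\ref{prop:ResultadosAliciaLuis}(ii) gives $\mathrm{mult}(\alpha, f_\sigma) \ge 4$ when $\alpha = \beta$, and Proposition~\ref{prop:ResultadosAliciaLuis}(iii) then forces the marked cell to have at least five points, contradicting the cardinalities allowed by Definitions~\ref{def:I}--\ref{def:III}. One minor slip: in Definition~\ref{def:III} the marked cell is $\sigma = \{i_1, i_2, i_3\}$, of cardinality $3$ rather than $2$ as you wrote, though this does not affect the contradiction.
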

\begin{proof}
Since $\val(a)=\val(b)$, by item $ii)$ of
Proposition~\ref{prop:ResultadosAliciaLuis}, both $\alpha $ and
$\beta$ are multiple roots of the residual polynomial associated
to the marked cell of slope $-\val(a)$ in the Newton diagram of
$w$. Moreover, if $\alpha =\beta$, then by the same item it is
root of multiplicity at least four. By item $iii)$, this means
that the corresponding marked cell needs to have at least five
points (or equivalently, three marked points). But then, $w$
cannot be in the interior of a cone of type $I$, $II$, $III$,
which is a contradiction. Then, $\alpha \ne \beta$.
\end{proof}

\begin{remark} Given $f\in \K[x]$, we have already remarked that
we assume that one of its roots is equal to $1$. Moreover, by item
$i)$ in Proposition~\ref{prop:ResultadosAliciaLuis}, this means we
can assume a desired lower facet from the Newton Diagram to have
slope $0$. Multiplying $f$ by an appropriate constant in $\K^*$,
we can moreover assume without loss of generality that
$\val(c_i)\ge 0$ for all coefficients of $f$ and $\val(c_j)=0$ for
every $j\in \cA$ that lie in the cell associated to the lower
facet of slope $0$.
\end{remark}

In the definition of type $II$ cones, we explicitly excluded
subdivisions whose marked cell is a translation of an exceptional
configuration. We now show that these subdivisions with
exceptional configurations are not in $\mathcal{T}(Sev_n^2)$.

\begin{proposition}\label{prop:exceptionalConfig}
Let $\sigma=\{i_1,i_2,i_3,i_4\}$ be the translation of an
exceptional configuration. Let $w\in {\rm Im}_{\val}$ be a point
such that $\sigma$ is the only marked cell from the subdivision of
$\cA$ induced by $w$. Then $w$ is not in $\mathcal{T}(Sev_n^2)$.
\end{proposition}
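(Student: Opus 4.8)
The plan is to reduce to the case where $\sigma$ itself (not just a translation of it) is the marked cell, and then to analyze each of the five exceptional configurations in turn using Theorem~\ref{teo:matrix-by-minors} applied to the matrix $\M(b)$. First I would observe that since $w$ lies in ${\rm Im}_{\val}$ and in $\mathcal{T}(Sev_n^2)$, there is a polynomial $f \in \K[x]_w$ with two double roots; after scaling the variable we may assume the double roots are $1$ and $b$ with $\val(b)\ge 0$, so $c \in \ker(\M(b))$ and $w \in \mathcal{T}(\ker(\M(b)))$. By item~ii) of Proposition~\ref{prop:ResultadosAliciaLuis}, the residual polynomial $f_\sigma$ attached to the unique marked cell $\sigma$ must carry \emph{both} markings, i.e.\ it must have (at least) two double roots among its nonzero roots; in particular $\sigma$ must have lattice length $\ge 4$, consistent with $\#\sigma = 4$ and $\sigma$ being a four-term configuration with two marked interior points. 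Translation by $s$ corresponds to multiplying $f$ by $x^s$, which does not change the double-root structure, so without loss of generality $i_1 = 0$ and $\sigma$ is literally one of $\{0,1,2,3\},\{0,1,2,4\},\{0,2,3,4\},\{0,3,4,6\},\{0,2,3,6\}$.

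Next I would exploit the fact that $\sigma$ being the \emph{only} marked cell of $\Pi_w$ forces a very rigid local picture: restricting to the columns indexed by $\sigma$, the residual polynomial $f_\sigma = \sum_{i\in\sigma}\delta_i x^i$ is (up to a monomial factor $x^{i_1}$ and a scalar) a degree-$(i_4-i_1)$ polynomial whose multiset of roots consists of the residues $\alpha,\beta$ of the two double roots (each with multiplicity $2$) together with the residues of the simple roots of $f$ whose valuation equals the slope of the marked facet. For the short configurations the degree $i_4 - i_1 \in \{3,4,6\}$ leaves essentially no room: e.g.\ for $\{0,1,2,3\}$ the residual polynomial has degree $3$ but must be divisible by $(x-\alpha)^2(x-\beta)^2$, which is impossible since $\alpha\neq\beta$ by Corollary~\ref{corol:differentResidue}; for $\{0,1,2,4\}$ and $\{0,2,3,4\}$ a degree-$4$ polynomial with two distinct double roots is forced to be $\delta\,(x-\alpha)^2(x-\beta)^2$, whose expansion has a nonzero $x^3$ (resp.\ $x^1$) coefficient, contradicting that $3$ (resp.\ $1$) is \emph{not} in $\sigma$ and hence lies strictly above the marked facet. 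The two configurations of degree $6$, namely $\{0,3,4,6\}$ and $\{0,2,3,6\}$, require the finer arithmetic input: here $\sigma$ has only four lattice points but the facet has lattice length $6$, so by item~ii) the residual polynomial (of degree $6$) must have the form $\delta\prod_j (x-\gamma_j)$ with the multiset of $\gamma_j$'s containing $\alpha,\alpha,\beta,\beta$ plus two more residues; I would expand and check that the constraint ``the $x^1,x^2$ coefficients vanish'' (for $\{0,3,4,6\}$) resp.\ ``the $x^1,x^4,x^5$ coefficients vanish'' (for $\{0,2,3,6\}$) forces a system on $\alpha,\beta$ and the remaining residues that has no solution with $\alpha\neq\beta$ and all residues nonzero — this is the symmetric-function computation, which can be pushed to the code in the Appendix.

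The main obstacle I anticipate is precisely the last step: ruling out the two degree-$6$ exceptional configurations is not a one-line impossibility but a genuine (small) elimination problem on the elementary symmetric functions of six residues subject to several coefficient-vanishing conditions and the non-degeneracy constraints $\alpha\ne\beta$, all residues $\ne 0$ (and, where relevant, $\ne 1$). I would organize this by writing the residual polynomial as $\delta\,(x-\alpha)^2(x-\beta)^2(x^2 - p x + q)$, imposing the two or three linear-in-the-coefficients vanishing conditions, and solving for $p,q$ in terms of $\alpha,\beta$; the system will turn out to be overdetermined, and I would extract the resulting polynomial relation in $\alpha,\beta$ and show its only solutions have $\alpha=\beta$ or a vanishing residue. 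An alternative, and probably cleaner, route for all five cases uniformly is to apply Theorem~\ref{teo:matrix-by-minors} directly: choose a witnessing set $J$ of four columns with $\operatorname{rank}(\M(b)_J)=4$ for which one checks, using the explicit minor formulas from Appendix~\ref{app:PropertiesM}, that the minimum in~\eqref{eq:minors} is attained only once — exactly as is done for the discarded cases in Example~\ref{ex:Esterov} — thereby contradicting $w\in\mathcal{T}(\ker(\M(b)))$. I would present the symmetric-function argument as the conceptual proof and relegate the minor bookkeeping for the two degree-$6$ cases to a verification in the Appendix.
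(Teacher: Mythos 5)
Your overall architecture (reduce to $i_1=0$, pass to the residual polynomial $f_\sigma$ via Proposition~\ref{prop:ResultadosAliciaLuis}, and argue by support constraints) is the same as the paper's, and your treatment of $\{0,1,2,3\}$ and your elimination plan for the two lattice-length-$6$ configurations match it. However, your argument for the two lattice-length-$4$ configurations contains a genuine error.

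For $\sigma=\{0,1,2,4\}$ you write $f_\sigma=\delta(x-\alpha)^2(x-\beta)^2$ and claim the $x^3$ coefficient is nonzero. It is not: the $x^3$ coefficient is $-2\delta(\alpha+\beta)$, which vanishes exactly when $\alpha=-\beta$ (e.g.\ $(x-1)^2(x+1)^2=x^4-2x^2+1$). So the claimed contradiction evaporates. The correct contradiction is relational, not pointwise: one computes $\gamma_1=-2\delta\,\alpha\beta(\alpha+\beta)$ and $\gamma_3=-2\delta(\alpha+\beta)$, so $\gamma_1=\alpha\beta\,\gamma_3$ with $\alpha\beta\neq 0$; then $3\notin\sigma$ forces $\gamma_3=0$, hence $\gamma_1=0$, which contradicts $1\in\sigma$. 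The same issue affects your argument for $\{0,2,3,4\}$ (where the $x^1$ coefficient can likewise vanish); the paper sidesteps it by using the reciprocal polynomial $g(x)=x^n f(x^{-1})$ to reduce $\{0,2,3,4\}$ (and $\{0,3,4,6\}$) to the already-treated configurations, which you do not use. A smaller point: you invoke Corollary~\ref{corol:differentResidue} to get $\alpha\neq\beta$, but that corollary's hypothesis ($w$ in the interior of a type I/II/III cone) is not available here — one must instead rerun its short argument (multiplicity $\ge 4$ in $f_\sigma$ would force $\#\sigma\ge 5$), as the paper does.
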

\begin{proof}
As the lattice length of the marked cell has to be at least 4, any
$w$ such that the induced subdivision has as the only marked cell
a translation of $\{0,1,2,3\}$ is not in $\mathcal{T}(Sev_n^2)$.

As we will consider the nonzero roots of residual polynomials with
support in $\sigma$, without loss of generality we can assume
$\sigma$ an exceptional configuration. Also, we can assume that
the lower facet associated to $\sigma$ has slope 0. Suppose there
is a polynomial $f \in \K[x]_w\cap Sev_n^2$ and $1,b$ are its
double roots. As $\sigma$ is the only marked cell, $\val(b)=0$ and
$b=\beta+*$ where, as in the proof of
Corollary~\ref{corol:differentResidue}, $\beta\neq 1$.

If $\sigma = \{0,1,2,4\}$ and
$f_{\sigma}(x)=\gamma_4(x-1)^2(x-\beta)^2= \sum\limits_{i \in
\sigma}\gamma_ix^i$, then $\gamma_3=\frac{\gamma_1}{\beta}$ and
moreover, $\gamma_3=0$ and $\gamma_1\ne 0$, which is a
contradiction.

If $\sigma = \{0,2,3,6\}$ and
$f_{\sigma}(x)=\gamma_6(x-1)^2(x-\beta)^2(x^2+\delta
x+\epsilon)=\sum\limits_{i \in \sigma}\gamma_ix^i$, as
$\beta\gamma_2\gamma_3$ is a linear combination with coefficients
in $\Q[\beta,\delta,\epsilon]$ of the coefficients of the
monomials $x,x^4,x^5$ (see
Computation~\ref{comp:exceptionalConfig}),
$\beta\gamma_2\gamma_3=0$ which is also a contradiction.

Having ruled out these two configurations and considering the
polynomial $g\in \K[x]\cap Sev_n^2$ define as
$g(x)=x^{n}f(x^{-1})$, we discard the remaining exceptional
configurations because $\{0,2,3,4\}= \{4-i \ : \ i \in
\{0,1,2,4\}\}$ and $\{0,3,4,6\}=\{6-i\ : \ i \in \{0,2,3,6\}\}$.
\end{proof}

We now provide another technical result related to cones of type
$III$.

\begin{proposition} \label{prop:unityRoot} Let $n\ge 4$ and
$w \in \mathcal{T}(Sev_n^2)$ such that the subdivision induced by
$w$ has only one marked cell $\sigma=\{i_1,i_2,i_3\}$. Let $f\in
Sev_n^2\cap \K[x]_w$ and $a= \alpha t^v +{\rm *}, b=\beta t^v+ *
\in \K^*$ be multiple roots of $f$. Then $\frac{\beta}{\alpha}\ne
1$ is a root of unity of order that divides
$\gcd(i_2-i_1,i_3-i_1)$.
\end{proposition}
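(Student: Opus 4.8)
The plan is to exploit the description of cones of type III and the arithmetic structure of the residual polynomial forced by having $\sigma = \{i_1, i_2, i_3\}$ as the unique marked cell, together with item ii) of Proposition~\ref{prop:ResultadosAliciaLuis}. Normalizing as in the preceding remarks, I would assume the lower facet associated to $\sigma$ has slope $0$, so that $v = \val(a) = \val(b) = 0$ (if the common valuation is some other $v$, replace $f(x)$ by $f(t^{v}x)$, which only shifts the slope of the relevant facet and translates $\sigma$, leaving $\gcd(i_2-i_1, i_3-i_1)$ unchanged). Write $f_\sigma = \sum_{i \in \sigma} \gamma_i x^i \in K[x]$ for the residual polynomial with respect to $\sigma$; after the normalization all $\gamma_i \neq 0$ for $i \in \sigma$. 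By Corollary~\ref{corol:differentResidue}, $\alpha \neq \beta$, so $\alpha$ and $\beta$ are two \emph{distinct} nonzero roots of $f_\sigma$, and by item ii) of Proposition~\ref{prop:ResultadosAliciaLuis} each of them is a double root of $f_\sigma$ (their multiplicity as a root of $f_\sigma$ is at least the multiplicity of $a$, resp.\ $b$, as a root of $f$, which is $\ge 2$; and by item iii) it cannot exceed $3$, but $2+2 > 3$ already forces both multiplicities to be exactly $2$ and forces $\#\sigma \ge 5$ unless... — here I must be careful: $\sigma$ has only $3$ elements, so $f_\sigma$ has degree at most $i_3 - 0$ but only $3$ monomials).

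The decisive structural observation is that a polynomial supported on exactly three exponents $\{i_1, i_2, i_3\}$ is, up to a monomial factor $x^{i_1}$ and a substitution, a trinomial; after dividing by $x^{i_1}$ we get a polynomial in $x$ supported on $\{0,\, i_2 - i_1,\, i_3 - i_1\}$. Set $g = \gcd(i_2 - i_1, i_3 - i_1)$ and write $i_2 - i_1 = g m_2$, $i_3 - i_1 = g m_3$ with $\gcd(m_2, m_3) = 1$; then $x^{-i_1} f_\sigma(x) = h(x^g)$ for a trinomial $h(y) = \gamma_{i_1} + \gamma_{i_2} y^{m_2} + \gamma_{i_3} y^{m_3} \in K[y]$. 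Now $\alpha, \beta$ are roots of $x^{-i_1}f_\sigma(x)$, hence $\alpha^g$ and $\beta^g$ are roots of $h$. I claim $\alpha^g = \beta^g$: if not, $h$ would have (at least) two distinct roots, each of multiplicity $\ge 2$ since $\alpha$ being a double root of $f_\sigma$ makes $\alpha^g$ a double root of $h$ (differentiate the relation $f_\sigma(x) = x^{i_1} h(x^g)$; a double root away from $0$ of the left side forces $h(\alpha^g) = h'(\alpha^g) = 0$ because the chain-rule factor $g x^{g-1}$ is nonzero at $\alpha \neq 0$). Thus $h$ would have a repeated-root factor of degree $\ge 4$, impossible for a trinomial $h$ of degree $m_3 \le 3$ unless... — the genuinely clean way is: a trinomial $\gamma_{i_1} + \gamma_{i_2} y^{m_2} + \gamma_{i_3} y^{m_3}$ has at most $m_3 - m_2 + 1$... no. Let me instead argue directly: the number of roots of $h$ counted without multiplicity plus the number counted with multiplicity is constrained; cleanest is that $h$ and $h'$ together, $h'$ being a binomial $\gamma_{i_2} m_2 y^{m_2 - 1} + \gamma_{i_3} m_3 y^{m_3 - 1}$ whose nonzero roots all satisfy $y^{m_3 - m_2} = -\gamma_{i_2} m_2 / (\gamma_{i_3} m_3)$, so $h'$ has at most $m_3 - m_2$ distinct nonzero roots, and at such a root $y_0$ the value $h(y_0)$ is determined; a short computation shows $\alpha^g$ and $\beta^g$ would have to be among these common roots with $h(y_0) = 0$, and one shows two such cannot coexist. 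Rather than belabor this, I would prove the needed fact as a small lemma: \emph{a trinomial in $K[y]$ has at most one nonzero multiple root (without counting multiplicity)}, which follows because a common root $y_0 \neq 0$ of $h$ and $h'$ satisfies the two linear equations $\gamma_{i_1} + \gamma_{i_2} y_0^{m_2} + \gamma_{i_3} y_0^{m_3} = 0$ and $m_2 \gamma_{i_2} y_0^{m_2} + m_3 \gamma_{i_3} y_0^{m_3} = 0$, which determine $y_0^{m_2}$ and $y_0^{m_3}$ hence $y_0^{\gcd(m_2,m_3)} = y_0$ uniquely. Therefore $\alpha^g = \beta^g$.

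Having $\alpha^g = \beta^g$ with $\alpha \neq \beta$ and $\alpha, \beta \neq 0$, the ratio $\zeta := \beta/\alpha$ satisfies $\zeta^g = 1$ and $\zeta \neq 1$. It remains to upgrade ``$g$'' to the \emph{exact} order of $\zeta$: let $e$ be the multiplicative order of $\zeta$, so $e \mid g$ and $e > 1$; this is exactly the statement that $\zeta$ is a root of unity of order dividing $\gcd(i_2 - i_1, i_3 - i_1)$, as claimed. (No further minimality is asserted.) I would then remark, for the reader, that this is precisely the arithmetic mechanism built into Definition~\ref{def:III}: the divisor $d > 1$ of $g$ appearing there is (a divisor of) $e$, and the requirement that $d \nmid j_1 - i_1$, $d \nmid j_2 - i_1$ is what singles out the exponents $\tau = \{j_1, j_2\}$ carrying the hidden tie. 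The main obstacle is the trinomial multiple-root lemma in the third paragraph — making the bookkeeping of multiplicities at $0$ versus away from $0$ clean, and handling the passage from multiplicities of $f$ to multiplicities of $f_\sigma$ via Proposition~\ref{prop:ResultadosAliciaLuis} items ii)–iii) without circularity; everything else is a direct substitution argument.
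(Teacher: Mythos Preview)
Your argument is correct and is essentially the paper's own proof in different packaging. The paper normalizes $a=1$ and $i_1=0$, writes down the four equations $f_\sigma(1)=f_\sigma'(1)=f_\sigma(\beta)=f_\sigma'(\beta)=0$, uses the first two to express $\gamma_0,\gamma_{i_2}$ in terms of $\gamma_{i_3}$, and substitutes into the last two to obtain $\beta^{i_2}=\beta^{i_3}=1$ in two lines. Your trinomial lemma---a trinomial with coprime nonzero exponents has at most one nonzero multiple root, because $h(y_0)=h'(y_0)=0$ pins down $y_0^{m_2},y_0^{m_3}$ and hence $y_0$---is exactly this computation, just without first setting one of the roots to $1$; that normalization is what lets the paper skip your gcd-extraction and the detours before you settle on the lemma.

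One small correction: you invoke Corollary~\ref{corol:differentResidue} for $\alpha\neq\beta$, but that corollary assumes $w$ lies in the interior of a type~I/II/III cone, which is not a hypothesis here. Use instead the direct argument you nearly complete: if $\alpha=\beta$ then by item~ii) of Proposition~\ref{prop:ResultadosAliciaLuis} the multiplicity of $\alpha$ in $f_\sigma$ is at least $4$, contradicting the bound $<\#\sigma=3$ from item~iii).
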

\begin{proof} We can assume as before that $a=1$, and
$i_1=0$. Let
$f_{\sigma}=\gamma_0+\gamma_{i_2}x^{i_2}+\gamma_{i_3}x^{i_3}$ be
the residual polynomial of $f$ associated to the marked cell
$\sigma$. Then by Proposition~\ref{prop:ResultadosAliciaLuis},
$1,\beta$ are double roots of $f_{\sigma}$ and $\beta \ne 1$. By
evaluating we obtain the following equalities:
$$\begin{matrix} i) & \gamma_0+\gamma_{i_2}+\gamma_{i_3}=0. & iii)
 &  \gamma_0+\gamma_{i_2}\beta^{i_2}+
\gamma_{i_3}\beta^{i_3}=0.\\
ii)  & i_2\gamma_{i_2}+i_3\gamma_{i_3}=0. & iv)
 &  i_2\gamma_{i_2}\beta^{i_2}+
i_3\gamma_{i_3}\beta^{i_3}=0.
\end{matrix}$$
By equalities $i)$ and $ii)$ we have
$\gamma_0=(\frac{i_3}{i_2}-1)\gamma_{i_3}$ and
$\gamma_{i_2}=-\frac{i_3}{i_2}\gamma_{i_3}.$ Replacing in equality
$iv)$ and simplifying we obtain $\beta^{i_2}=\beta^{i_3}$.
Finally, using equality $iii)$ we obtain $\beta^{i_3}=1$. It
follows that $\beta$ is an $d$-th primitive root of unity where
$d\ne 1$ and $d$ divides $\gcd(i_2,i_3)$.
\end{proof}

Finally we can prove that, for $w \in \R^{n+1}$ to be a point in
$\mathcal{T}(Sev_n^2)$, is a necessary condition that $w$ be a
point in the union of all cones of type $I$, $II$ or $III$.
We start by defining an open dense subset $\mathcal V$ of $Sev_n^2(\K)$.
Consider the open dense subset ${\mathcal U}'$ of $\mathcal U$ given by those $(a,b,a_1,\dots, a_{n-3})$
such that $D_J(a/b) \neq 0$, for any $J \subset \cA$ of cardinal 4. Note that $D_J(a/b) \neq 0 $ if and only if
$D_J(b/a) \neq 0$ by Lemma~\ref{lemma:i=0}. Moreover, ${\mathcal U}'
\neq \emptyset$
by Lemma~\ref{lemma:B3}. We call $\mathcal V = \varphi_\K({\mathcal U}')$,
where the map $\varphi_K$ has been defined in Section~\ref{sec:basic-facts}.

\begin{theorem}\label{teo:firstInclusion} Let $n\ge 4$ and
$w \in \R^{n+1}$ be  in  a maximal cone of
$\mathcal{T}(Sev_n^2)$. Then $w$ is in a cone of type $I$, $II$ or
$III$.
\end{theorem}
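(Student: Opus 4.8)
The plan is to take $w$ in a maximal cone of $\mathcal{T}(Sev_n^2)$ and, by the description of $\mathcal{T}(Sev_n^2)$ as the closure of the valuations of points in $\mathcal V$, choose a generic $f \in \K[x]_w \cap \mathcal V$, i.e. a polynomial with two double roots $a$ and $b$ and with all the relevant minors $D_J(a/b)$ nonvanishing. After the standard normalizations (by item~i) of Proposition~\ref{prop:ResultadosAliciaLuis} and the Remark following Corollary~\ref{corol:differentResidue}, we may assume $a=1$, that the lower facet of slope $0$ is present, and that $\val(c_i)\ge 0$ with equality on that facet). By Remark~\ref{rem:1or2markedPoints}, since $w$ is in the interior of a maximal (codimension~$2$) cone, the Newton diagram of $w$ has either exactly two marked points or exactly one marked point, and each marked cell carries a bounded number of marked points. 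First I would dispose of the case of lattice length $\ge 3$ in a single marked cell via the Corollary following Proposition~\ref{prop:ResultadosAliciaLuis}, so every marked cell has lattice length exactly $2$.

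The argument then splits according to the marked-cell structure. \textbf{Case A: two distinct marked cells, each with one marked point.} Here I would argue that the two double roots $a$, $b$ must sit in the two facets forced by the two marked cells (using Proposition~\ref{prop:ResultadosAliciaLuis}, items i)--iii): a marked cell of lattice length $2$ with one marked point forces a double root of the corresponding residual binomial-type polynomial), so $\val(a)\ne\val(b)$, $\Pi_w$ has exactly two marked cells each with one mark, and $w$ lies (in the interior) in a cone of type $I$ by Definition~\ref{def:I}. \textbf{Case B: one marked cell with two marked points.} Then Proposition~\ref{prop:ResultadosAliciaLuis} forces both $\alpha$ and $\beta$ (the leading coefficients of the two double roots, which have equal valuation, $\alpha\ne\beta$ by Corollary~\ref{corol:differentResidue}) to be roots of the residual polynomial $f_\sigma$ supported on $\sigma=\{i_1,i_2,i_3,i_4\}$; this degree-$4$ residual polynomial is then $\gamma(x-\alpha)^2(x-\beta)^2$ up to a constant, i.e. $\sigma$ must support such a product. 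Proposition~\ref{prop:exceptionalConfig} rules out exactly the translations of exceptional configurations, so what remains is precisely a cone of type $II$ (Definition~\ref{def:II}).

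\textbf{Case C: one marked cell $\sigma=\{i_1,i_2,i_3\}$ with one marked point (lattice length $2$, three lattice points).} This is the delicate case and the main obstacle. By Proposition~\ref{prop:unityRoot}, the two double roots of $f$ of equal valuation have ratio $\beta/\alpha$ a primitive $d$-th root of unity with $d\mid g:=\gcd(i_2-i_1,i_3-i_1)$ and $d>1$; in particular $g>1$. It remains to produce the ``hidden tie'': I would apply Theorem~\ref{teo:matrix-by-minors} to the matrix $\mathbf M(b)$ (after the reduction $a=1$, and using that $f\in\mathcal V$ guarantees all needed minors $D_J(b)$ are nonzero so every $4$-subset $J$ has $\mathrm{rank}(\mathbf M(b)_J)=4$). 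Because $\beta$ is a primitive $d$-th root of unity, the relevant $4\times 4$ minors $D_J(b)$ acquire extra valuation exactly for those $J$ all of whose columns are $\equiv i_1 \bmod d$ (here I would invoke the divisibility/valuation facts about $D_J$ collected in Appendix~\ref{app:PropertiesM}); for the remaining columns the minimum in \eqref{eq:minors} being attained at least twice translates, after choosing $J$ that isolates the facet $\eta$, into the statement that $\min_j\langle\eta,(j,w_j)\rangle$ over all $j$ with $j-i_1\not\equiv 0 \bmod d$ is attained at least twice — say at $j_1,j_2$ — which is exactly condition ii) of Definition~\ref{def:III}. One also checks $j_1,j_2$ are distinct from $i_1,i_2,i_3$ and that $d\nmid j_1-i_1,\ d\nmid j_2-i_1$, so $w$ lies in a cone of type $III$. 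Passing from the generic $f$ back to arbitrary $w$ in the maximal cone is then just taking closures, since each of the three cone-types is closed by construction. The hard part will be the bookkeeping in Case~C: controlling precisely which minors $D_J(b)$ gain valuation when $b$ is a root of unity times $1+{*}$, which is why the minor estimates of Appendix~\ref{app:PropertiesM} are needed.
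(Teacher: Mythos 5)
Your Cases A and B match the paper exactly: two marked cells give type I, one marked cell with two marked points plus Proposition~\ref{prop:exceptionalConfig} gives type II, and the preliminary normalizations and dimension counts you invoke (Remark~\ref{rem:1or2markedPoints}, the corollary after Proposition~\ref{prop:ResultadosAliciaLuis}, Corollary~\ref{corol:differentResidue}) are the ones the paper uses.

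The gap is in Case~C, and it is not merely bookkeeping. You write that, after applying Theorem~\ref{teo:matrix-by-minors} with the triple $J=\sigma=\{i_1,i_2,i_3\}$, the ``minimum in \eqref{eq:minors} being attained at least twice translates $\ldots$ into the statement that $\min_j\langle\eta,(j,w_j)\rangle$ over all $j$ with $j-i_1\not\equiv 0\bmod d$ is attained at least twice.'' This translation is false as stated. The minimum in \eqref{eq:minors} ranges over \emph{all} columns $k$. By Lemma~\ref{lemma:valD_I} the entries split as $v+w_k$ for $k$ with $d\nmid k-i_1$ and $4v+w_k$ for $k\notin\sigma$ with $d\mid k-i_1$; nothing prevents the ``attained twice'' condition from being satisfied entirely by two indices in the second group (e.g.\ two indices $s,s'\equiv i_1\bmod d$ with small, equal $w_s=w_{s'}$), in which case the single application of the theorem imposes \emph{no} constraint on the first group and no hidden tie is forced. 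So one application with $J=\sigma$ does not suffice.

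The paper closes this gap with a second, nontrivial application of Theorem~\ref{teo:matrix-by-minors}. Assuming the hidden tie fails (the minimum of $\{w_j:d\nmid j-i_1\}$ is attained only at $i_4$), the first application forces some $s_0$ with $d\mid s_0-i_1$ to realize the overall minimum $\val(D_{\sigma\cup\{s_0\}}(b))+w_{s_0}\le\val(D_{\sigma\cup\{k\}}(b))+w_k$ for all $k$. One then applies the theorem to the \emph{different} triple $I=\{i_1,i_2,s_0\}$ (replacing $i_3$ by $s_0$) and computes, again via Lemma~\ref{lemma:valD_I}, that $\val(D_{I\cup\{k\}}(b))+w_k$ attains its minimum ($=4v$) only at $k=i_3$; this contradicts the theorem and finishes the argument. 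Your plan does not anticipate this second application or the choice of the modified index set $I$, which is the essential idea of Case~C.
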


\begin{proof}
Without loss of generality, we assume that
 $w \in {\rm Im}_{\val}$ and that there are one or two marked points in the subdivision
induced by the Newton Diagram of $w$.
If there are two marked points in different cells of the
subdivision, then $w$ is in a cone of type $I$. In the same way,
if there are two marked points in only one marked cell of the
subdivision, by Proposition~\ref{prop:exceptionalConfig}, we know
the marked cell is not the translation of an exceptional
configuration, hence $w$ is in a cone of type $II$.

Let us consider now the remaining case of only one marked cell
$\sigma=\{i_1,i_2,i_3\}$ with one marked point. There is a
polynomial $f \in Sev_n^2\cap \K[x]_w$ with $a,b\in \K^*$ double
roots. Then, because of item iii) in
Proposition~\ref{prop:ResultadosAliciaLuis}, $\val(a)=\val(b)$. To
see that $w$ is in a cone of type $III$,  we assume  without loss
of generality that $a=1$ and $b=\beta + ht^v+*$, with $v>0$. Using
Proposition~\ref{prop:unityRoot} we know that $\beta$ is a root of
unity of order $d$, where $d>1$ divides $\gcd(i_2-i_1,i_3-i_1)$.
Also, we assume without loss of generality that $w \in
\val(\mathcal V)$. For any index $k \notin \sigma$, we have by
Lemma~\ref{lemma:mult_of_0,1} that $D_{\sigma
  \cup \{k\}}(\beta)=0$. Since we assume that $D_{\sigma \cup \{k\}}(b)\neq 0$,
it follows that $h \neq 0$.

As $a=1$, we can assume $w_{i_1}=w_{i_2}=w_{i_3}=0$. We want to
prove that min$_{j \in J}\{w_j\}$ is attained at least twice, where $J=\{j \in \cA \ : \ d \nmid j-i_1\}$ .
Suppose that the minimum is only attained once at $i_4 \in J$, (with
$w_{i_4}>0$), and let $S=\{j\in \mathcal{A}\ : \ d \mid j-i_1\}$.
Because of Lemma~\ref{lemma:valD_I}
$\val(D_{\sigma\cup\{j\}}(b))=v$ for all $ j \in J$, hence ${\rm
min}_{j \in J}\{\val(D_{\sigma\cup\{j\}}(b))+w_j\}$ is also only
attained once at $i_4$. Then, by
Theorem~\ref{teo:matrix-by-minors} there exists at least one $s_0
\in S$ such that, for all $ k \in \cA$,
$\val(D_{\sigma\cup\{s_0\}}(b)) +w_{s_0}\le
\val(D_{\sigma\cup\{k\}}(b))+w_k$.

Consider the set $I=\{i_1,i_2,s_0\}$. Again by Theorem
~\ref{teo:matrix-by-minors} the minimum in the set
$\{\val(D_{I\cup\{k\}}(b))+w_k \ : \ k \in \cA\}$ is attained at
least twice. However, again by Lemma~\ref{lemma:valD_I},
$\val(D_{I\cup\{s\}}(b))=4v=\val(D_{\sigma\cup\{s_0\}}(b))$ when
$s\in S\!\setminus\! I$ and
$\val(D_{I\cup\{j\}}(b))=v=\val(D_{\sigma\cup\{j\}}(b))$ when
$j\in J$. Then {\small
$$\val(D_{I\cup\{k\}}(b))+w_k=\begin{cases}
\val(D_{\sigma\cup\{s_0\}}(b))+w_{i_3}=
\val(D_{\sigma\cup\{s_0\}}(b))=:D_{i_3} \qquad \quad \ \ \mbox{ if } k = i_3 \\
\val(D_{\sigma\cup\{s_0\}}(b))+w_k> D_{i_3}
\hspace{2.6cm} \mbox{ if } k \in S\!\setminus\!(I\!\cup\!\{i_3\}) \\
\val(D_{\sigma\cup\{k\}}(b))+w_k\ge
\val(D_{\sigma\cup\{s_0\}}(b))+w_{s_0}> D_{i_3} \qquad \mbox{ if }
k \in J \\ \infty> D_{i_3} \hspace{6.75cm} \mbox{ if } k \in I
\end{cases}$$} so the minimum would only be attained at $k=i_3$,
which is a contradiction.
\end{proof}

\section{Sufficient conditions}\label{sec:sufficient}

In this section we prove that each of the cones of type $I$, $II$
and $III$ belongs to the tropical Severi variety. Our proof is
constructive, that is, it is done showing that, for every $w$ in one
of the cones, we can compute a polynomial $f\in \K[x]_w\cap
Sev_n^2$.

The key is to choose an appropriate configuration $J=\{i_1, i_2,
i_3, i_4\}$ and study the matrix $\mathbf{M}_J(x)$ as well as the
valuation of its determinant $D_J=\det(\mathbf{M}_J)$, where the
matrix $\M$ is as in Definition~\ref{def:matrixM(x)}. We will need
many technical lemmas, that are collected in
Appendix~\ref{app:PropertiesM}. In particular, we have that  $D_J
\in\Z[x]-\{0\}$ for all 4-index set $J$, by Lemma~\ref{lemma:B3}.

We want to prove that there exists $c=(c_0, \dots, c_n)$ with
$\val(c)=w$ and $b$ different from 1 that solve the system
$\mathbf{M}(b) \, c^T=0$, and therefore $w$ is a point on
$\mathcal{T}(Sev_n^2)$. Moreover, we are going to find $f\in
\K[x]_w$ with multiple roots $1$ and $b$ such that $c_i=t^{w_i}$
for all $ i \not\in J$. Then, we are looking for an appropriate
$b$ and $(c_{i_1}, c_{i_2}, c_{i_3}, c_{i_4})$ with each of its
coordinates of the correct valuation that solve the equation
system:
\begin{equation} \label{eq:typeIIv2}
\mathbf{M}_J(b) \cdot
(c_{i_1},c_{i_2},c_{i_3},c_{i_4})^T=-(\sum\limits_{i \not\in
J}t^{w_i}, \sum\limits_{i \not\in J}it^{w_i}, \sum\limits_{i
\not\in J}b^it^{w_i}, \sum\limits_{i \not\in J}ib^it^{w_i})^T
\end{equation}

The easiest case are cones of type $I$.

\subsection{Type $I$}

Consider $w \in {\rm Im}_{\val}$ a point in the interior of a cone
of type $I$. That is, the subdivision induced by $w$ has two
different marked cells with one marked point each. Without loss of
generality, we can assume that the slopes of the lower facets from
the Newton diagram corresponding to those marked cells are 0 and
$-v$ where $v> 0$. Then, the Newton diagram is as in
Figure~\ref{fig:Type I}.

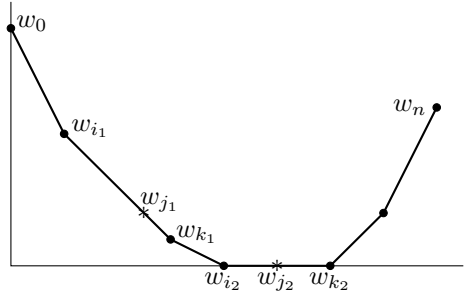
\begin{figure}[hbtp]
\begin{tikzpicture}[scale=0.7]
\draw (0,5) -- (0,0) -- (8.5,0); \draw[fill] (0,4.5) circle (2pt)
(1,2.5) circle (2pt) (3,0.5) circle (2pt); \draw[fill] (4,0)
circle (2pt) (6,0) circle (2pt) (7,1) circle (2pt) (8,3) circle
(2pt); \draw[thick] (0,4.5) -- (1,2.5) -- (3,0.5) -- (4,0) --
(6,0) -- (7,1) -- (8,3); \draw (2.5,1) node {$*$} (5,0) node
{$*$}; \draw (0.4,4.6) node {$w_0$} (1.5,2.6) node {$w_{i_1}$}
(2.8,1.2) node {$w_{j_1}$}; \draw (3.5,0.6) node {$w_{k_1}$};
\draw (4,-0.3) node {$w_{i_2}$} (5,-0.3) node {$w_{j_2}$} (6,-0.3)
node {$w_{k_2}$}; \draw (7.5,3) node {$w_{n}$};
\end{tikzpicture}
\caption{Two marked cells with different slopes}\label{fig:Type I}
\end{figure}

That is, there are two marked cells, $\{i_1, j_1, k_1\}$ and
$\{i_2, j_2, k_2\}$ where $k_1 \le i_2$, in the corresponding
subdivision. We can assume $w_{i_2}=w_{j_2}=w_{k_2}=0$. Let
$-v=\frac{w_{k_1}-w_{i_1}}{k_1-i_1}$ be the slope of the
non-horizontal marked segment.

\begin{theorem} \label{teo:Imain}
Let $n \ge 4$ and $w\in {\rm Im}_{\val}$ be a point in a cone of
type $I$. Then $w$ is an element of $\mathcal{T}(Sev_n^2)$.
\end{theorem}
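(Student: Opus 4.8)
The plan is to reduce the claim that $w$ lies in $\mathcal{T}(Sev_n^2)$ to the solvability of the linear system~(\ref{eq:typeIIv2}) for a suitable choice of configuration $J$ and parameter $b$, and then to exploit the ``transversal intersection of two discriminant cones'' picture. Concretely: since $w$ is in the interior of a type $I$ cone, there are two marked cells, $\{i_1,j_1,k_1\}$ with marked point $j_1$ on a facet of slope $-v$, $v>0$, and $\{i_2,j_2,k_2\}$ with marked point $j_2$ on the horizontal facet, normalized so that $w_{i_2}=w_{j_2}=w_{k_2}=0$. The natural guess is to take $J=\{i_1,k_1,i_2,k_2\}$ (the four ``corner'' indices of the two marked segments), pick $b=t^{v}\cdot b_0$ with $b_0\in K^*$ generic, set $c_i=t^{w_i}$ for $i\notin J$, and solve for $(c_{i_1},c_{k_1},c_{i_2},c_{k_2})$. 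One then checks that the resulting $c_i$ have the prescribed valuations $w_i$, which gives a polynomial $f\in\K[x]_w$ with double roots at $1$ and $b$, hence $w=\val(c)\in\mathcal{T}(Sev_n^2)$.

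First I would set up the system~(\ref{eq:typeIIv2}) with this $J$ and analyze it via Theorem~\ref{teo:matrix-by-minors} or directly. The left-hand matrix is $\mathbf{M}_J(b)$; by Lemma~\ref{lemma:B3} we know $D_J=\det(\mathbf{M}_J)\not\equiv 0$, and for generic $b_0$ we may assume $D_J(b)\ne 0$, so the system has a unique solution given by Cramer's rule. Second, I would compute the valuations of the four coordinates of the solution: each $c_{i_\ell}$ is a quotient of a $4\times 4$ determinant of $\mathbf{M}$ evaluated at $b$ (with one column replaced by the right-hand side) by $D_J(b)$. The right-hand side vector has entries whose valuations are governed by the Newton diagram: $\val(\sum_{i\notin J}t^{w_i})$ and $\val(\sum_{i\notin J}b^it^{w_i})$ are determined by which $i\notin J$ minimize $w_i$ and $w_i+i\cdot v$ respectively, and these are exactly the indices adjacent to the marked segments. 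The key point is that the two marked segments behave ``independently'': the horizontal part contributes valuations pinned at $0$ and the slope-$(-v)$ part contributes valuations scaling with $v$, so the minors factor (up to lower order) into a piece coming from the columns $\{i_1,k_1\}$ and a piece from $\{i_2,k_2\}$, reflecting that a type $I$ cone is the transverse intersection of two maximal cones of $\mathcal{T}(\Delta_n)$.

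The main obstacle I anticipate is precisely this valuation bookkeeping: showing that for the Cramer solution one gets $\val(c_{i_\ell})=w_{i_\ell}$ on the nose, with no unexpected cancellation in the leading terms of the relevant minors. This requires the determinant estimates from Appendix~\ref{app:PropertiesM} — in particular control of $\val(D_J(b))$ and of the $4\times 4$ minors $D_{J'}(b)$ for the index sets $J'$ obtained by swapping one element of $J$ for an index outside $J$ — together with the fact that the residual polynomials attached to the two marked cells are (after the generic choice of $b_0$) square polynomials with roots $1$ and $b_0$ that do not create degeneracies. I would also need to verify that the genericity conditions on $b_0$ (ensuring $D_J(b)\ne 0$ and that the relevant leading coefficients are nonzero) are simultaneously satisfiable, which follows since each is a nonvanishing condition on a nonzero polynomial in $b_0$ over the infinite field $K$. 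Once the valuations are confirmed, the polynomial $f=\sum c_i x^i$ has $\val(c)=w$ and, by construction, $1$ and $b$ as double roots, so $c\in\mathrm{ker}(\mathbf{M}(b))$ and $w\in\mathcal{T}(Sev_n^2)$, completing the proof.
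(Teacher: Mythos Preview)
Your strategy---fix $c_i=t^{w_i}$ for $i\notin J$, solve~(\ref{eq:typeIIv2}) by Cramer, and check that the resulting valuations are $w_{i_\ell}$---is exactly the paper's. The execution differs, and the paper's choices make the argument both shorter and gap-free. The paper takes $J=\{i_1,j_1,j_2,k_2\}$ (two \emph{adjacent} indices from each marked cell, including the marked points), not the four corners $\{i_1,k_1,i_2,k_2\}$, and sets simply $b=t^v$ with no generic parameter. Your corner choice breaks in the case $k_1=i_2$ (the two marked segments share an endpoint, which is allowed in type~$I$; indeed the type~$I$ picture in Figure~\ref{fig:3types} has exactly this), since then your $J$ has only three elements; the paper's $J$ always has four because $j_1<k_1\le i_2<j_2$. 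With the paper's $J$ and $b=t^v$, after writing $c_{i_\ell}=\gamma_{i_\ell}t^{w_{i_\ell}}$, dividing rows $3$ and $4$ of the system by $t^{vk_1+w_{k_1}}$, and setting $t=0$, one lands on the block-invertible residual system
\[
\begin{pmatrix} 0 & 0 & 1 & 1 \\ 0 & 0 & j_2 & k_2 \\ 1 & 1 & 0 & 0 \\ i_1 & j_1 & 0 & 0 \end{pmatrix}
\begin{pmatrix}\gamma_{i_1}\\\gamma_{j_1}\\\gamma_{j_2}\\\gamma_{k_2}\end{pmatrix}
= -\begin{pmatrix}1\\i_2\\1\\k_1\end{pmatrix},
\]
whose solution is written down explicitly and is visibly nonzero. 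No minor estimates from Appendix~\ref{app:PropertiesM} and no genericity of $b_0$ are invoked. Your variant (once $k_1=i_2$ is handled, say by reverting to the paper's $J$ in that case) would also go through, but as written it is a plan rather than a proof: the ``valuation bookkeeping'' you flag as the main obstacle is the entire content of the argument, and you have not carried it out.
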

\begin{proof}
Using the same notation and assumptions as before, we are going to
present a polynomial $f$ complying with the request. In
particular, we can assume $w$ is a  point in the interior of the
cone. It is worth mentioning that this is only one choice but
there are infinite polynomials that satisfy the theorem (see
\cite{Payne}). {Moreover, if $w \in \Z^{n+1}$, then our
proposed solution $f$ lies in $\C(t)[x]$.}

We will build a polynomial $f=\sum\limits_{i=0}^nc_ix^i\in
Sev_n^2$ such that $f\in \K[x]_w$ and its double roots are $a=1$
and $b=t^v$. Let $J=\{i_1, j_1, j_2, k_2\} $ and take
$c_l=t^{w_l}$ for all index $l \not\in J$.

We want to see that there are $c_{j_2}, c_{k_2}$ of valuation 0
and $c_{i_1}, c_{j_1}$ of valuations $w_{i_1}$ and $w_{j_1}$
respectively that are solutions of the system in Equation
\eqref{eq:typeIIv2}. Let us see that there exist $\gamma_{i_1},
\gamma_{i_2}$ such that $c_{i_1}=\gamma_{i_1}t^{w_{i_1}}+*$,
$c_{j_1}=\gamma_{j_1}t^{w_{j_1}}+*$ and $\gamma_{j_2},
\gamma_{k_2}$ with $c_{j_2}=\gamma_{j_2}+*$ and
$c_{k_2}=\gamma_{j_1}+*$. {Replacing this notation in Equation
\eqref{eq:typeIIv2}, we get the system
$$ \begin{pmatrix} t^{w_{i_1}} & t^{w_{j_1}} & 1 & 1 \\
i_1t^{w_{i_1}} & j_1t^{w_{j_1}} & j_2 & k_2 \\
t^{w_{i_1}+vi_1} & t^{w_{j_1}+vj_1} & t^{vj_2} & t^{vk_2} \\
i_1t^{w_{i_1}+vi_1} & j_1t^{w_{j_1}+vj_1} & j_2t^{vj_2} & k_2t^{vk_2}
\end{pmatrix} \begin{pmatrix} \gamma_{i_1} \\
\gamma_{j_1} \\ \gamma_{j_2} \\ \gamma_{k_2}
\end{pmatrix} = \begin{pmatrix} -1+* \\
-i_2+* \\ -t^{w_{k_1}+vk_1}+*\\
-k_1t^{w_{k_1}+vk_1}+* \end{pmatrix}
$$}
Dividing the third and fourth rows of the $4\times 4$ matrix by $t^{vk_1+w_{k_1}}$
we obtain a system where every term has nonnegative valuation
because $i_1,j_1,k_1$ are the indices where the minimum is
attained in the segment with slope $-v$. {Moreover, the valuation of the entries in those rows
associated to $j_2$ and $k_2$ is positive because of the convexity of the Newton Diagram.}

Setting $t=0$ we obtain the $4 \times 4 $ invertible system over $K$ (in matrix form):
$$\begin{pmatrix} 0 & 0 & 1 & 1 \\ 0 & 0 & j_2 & k_2 \\
1 & 1 & 0 & 0 \\ i_1 & j_1 & 0 & 0 \end{pmatrix}
\begin{pmatrix}\gamma_{i_1} \\
\gamma_{j_1} \\ \gamma_{j_2} \\ \gamma_{k_2}
\end{pmatrix} = \begin{pmatrix} {-}1 \\ {-}i_2 \\ {-}1 \\ {-}k_1
\end{pmatrix}.$$
 Hence,  system~\eqref{eq:typeIIv2} has
also a unique solution, which equals:
$$(c_{i_1}, c_{j_1}, c_{j_2}, c_{k_2})=\left(
{-}\frac{j_1-k_1}{j_1-i_1}t^{w_{i_1}}+*,
{-}\frac{k_1-i_1}{j_1-i_1}t^{w_{j_1}}+*,
{-}\frac{k_2-i_2}{k_2-j_2}+*,
{-}\frac{i_2-j_2}{k_2-j_2}+*\right),$$
and has the correct valuations.
\end{proof}

\subsection{Type $II$}

Type $II$ cones are the most difficult case to prove completely.
We follow the same approach as for type $I$ cones, but we have to
separate the case where the marked cell is an affine image of an
exceptional configuration. The choice of the node $b$ is more
complicated and subtle than in the case of nodes of type $I$.

\subsubsection{The general case}

For points $w$ in cones of type $II$ and $III$ we will assume that
the marked cell has slope $0$ and we will build a polynomial $f
\in \K[x]_w$ with double roots $1$ and $b$ of valuation $0$. We
need that $b=\beta+*$ and $D_J(b)\neq 0$, but this is possible
since $D_J\ne 0$ (see Lemma \ref{lemma:B3}). Note that, as
$\val(b)=0$, $\beta\neq 0$. Also, by
Corollary~\ref{corol:differentResidue} we can assume $\beta\neq
1$. Multiplying by the adjoint matrix adj$(\mathbf{M}_J(b))$ of
$\mathbf{M}_J(b)$, the solutions for all $ 1 \le j \le 4$ of
Equation (\ref{eq:typeIIv2}) are
\begin{equation}\label{eq:typeIIv3}
c_{i_j}= -\sum\limits_{i \not\in J}
\pm\frac{D_{(J\cup\{i\})-\{i_j\}}(b)}{D_J(b)} t^{w_{i}},
\end{equation}
where the signs depend on the number of permutations necessary to
reorder from lowest to highest the ordered set $J$ as we replace
$i_j$ by $i$ {(see Notation \ref{not:M_J})}. As $\val(b)=0$, the
entries of the matrix adj$(\mathbf{M}_I(b))$ are of valuation at
least $0$. Also, if $\val(w_i)>0$ for every $i \not\in J$, then
every coordinate in the right hand part of the system has positive
valuation. Therefore, if the solution
$(c_{i_1},c_{i_2},c_{i_3},c_{i_4})$ fulfills $\val(c_{i_j})=0$ for
some $ 1 \le j \le 4$, the valuation of $D_J(b)$ has to be
positive. Hence, $D_J(\beta)=0$.

\smallskip

Consider now a point $w$ in the interior of a cone of type $II$.
Then there is one marked cell $J=\{i_1,i_2,i_3,i_4\}$ with two
marked points. We can assume $w_{i_1}=w_{i_2}= w_{i_3}=
w_{i_4}=0$, and $w_j>0$ for all $j \not\in J$, so we are in the
situation stated above.

\begin{remark} \label{rem:exTypeIIminors} Let
$f=(x-1)^2(x-b)^2(x-a) \in \K[x]$ where $b=\beta+*$ with $\beta
\in K\setminus\{0,1\}$ and $\val(a)\ne 0$. From item i) in
Proposition~\ref{prop:ResultadosAliciaLuis}, the subdivision
induced by the vector of valuations of the coefficients of $f$ has
two cells, one of lattice length $1$ and another one of lattice
length $4$. Then, for $w$ to be a point in the interior of a cone
of type $II$, the cell of lattice length $4$ has to be either
$\{0,1,3,4\}$ or $\{1,2,4,5\}$ by
Proposition~\ref{prop:exceptionalConfig}. It is easy to see that
the only zeros in $K\setminus\{0,1\}$ of either $D_{\{0,1,3,4\}}$
or $D_{\{1,2,4,5\}}$ are $-2\pm\sqrt{3}$, as used in
Example~\ref{ex:typeII}.
\end{remark}

In what follows, the expression that a value is ``repeated at most once'' means that it
can occur $1$ or $2$ times (but not more).

\begin{theorem}\label{teo:IImaina}
Let $ w \in \R^{n+1}$ be a point a cone of type $II$ with marked
cell $J=\{i_1,i_2,i_3,i_4\}$. If there is a $\beta \in K\setminus\{0,1\}$
root of $D_J$ such that each of the powers
$\{\beta^{i_1},\beta^{i_2},\beta^{i_3},\beta^{i_4}\}$ {is repeated
at most once}, then $w$ is in $\mathcal{T}(Sev_n^2)$.
\end{theorem}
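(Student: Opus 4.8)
The plan is to produce explicitly a polynomial $f \in \K[x]_w$ with double roots $1$ and $b$, where $b = \beta + *$ for the given root $\beta$ of $D_J$, by solving system~\eqref{eq:typeIIv2} with the normalization $c_i = t^{w_i}$ for $i \notin J$. Since $w_{i_1}=w_{i_2}=w_{i_3}=w_{i_4}=0$ and $w_j > 0$ for $j \notin J$, the right-hand side of~\eqref{eq:typeIIv2} has the form $-(\mathbf{1}^T c_{\notin J}, \dots)$ whose $K$-reductions are the four sums $\sum_{i \notin J} \beta^0\cdot 0,\dots$ — more precisely, setting $t=0$ the right-hand side reduces to $(0,0,0,0)$ only if every $w_j>0$, which is exactly our situation, so after the reduction we are looking at the \emph{homogeneous} system $\mathbf{M}_J(\beta)\cdot(\gamma_{i_1},\gamma_{i_2},\gamma_{i_3},\gamma_{i_4})^T = 0$. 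Because $\beta$ is a root of $D_J = \det(\mathbf{M}_J)$, this homogeneous system over $K$ has a nonzero solution; the first task is to show that this solution has \emph{all four coordinates nonzero}, which is precisely where the hypothesis that each power in $\{\beta^{i_1},\beta^{i_2},\beta^{i_3},\beta^{i_4}\}$ is repeated at most once enters.

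First I would analyze the kernel of $\mathbf{M}_J(\beta)$. By the structure of $\M$ (rows $1, i, x^i, ix^i$) and the fact that $\beta \neq 0,1$, I would argue (using the minor identities collected in Appendix~\ref{app:PropertiesM}, e.g.\ Lemma~\ref{lemma:i=0} and the companions describing $D_J$ and its sub-$3\times3$ minors) that $\mathbf{M}_J(\beta)$ has rank exactly $3$ — equivalently, that some $3\times 3$ minor does not vanish — using that the $\beta^{i}$ are not too degenerate; the repetition hypothesis guarantees that we never get two equal columns of $\mathbf{M}_J(\beta)$ collapsing the rank below $3$. Given rank $3$, the kernel is one-dimensional and spanned by the vector of signed $3\times 3$ minors $(\pm D_{(J\cup\{i\})\setminus\{i_j\}}(\beta))_j$ read inside $J$; I then need each such $3\times 3$ minor to be nonzero. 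Each of these is, up to units and powers of $\beta$, a Vandermonde-type expression in three of the four values $\{\beta^{i_1},\dots,\beta^{i_4}\}$ together with the exponents, and the condition ``each power repeated at most once'' is exactly what prevents such a $3\times3$ minor from vanishing — this is the arithmetic heart of the statement and the step I expect to be the main obstacle, requiring the explicit minor computations of Appendix~\ref{app:PropertiesM}.

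Once all four reduced coordinates $\gamma_{i_j}$ are nonzero, I would lift: pick $b = \beta + ht^\epsilon + \cdots$ and $c_{i_j} = \gamma_{i_j} + (\text{higher order})$, and solve~\eqref{eq:typeIIv2} exactly over $\K$. Since $\val(b)=0$ the adjugate $\mathrm{adj}(\mathbf{M}_J(b))$ has entries of nonnegative valuation, and $\val(D_J(b)) = \val(D_J(\beta+*))$; because $D_J(\beta)=0$ but $\beta$ is a \emph{simple} root of $D_J$ in the relevant cases (or at worst we choose $h$ so that $\val(D_J(b))$ equals the generic value), the formula~\eqref{eq:typeIIv3} gives $\val(c_{i_j}) = 0 = w_{i_j}$ for each $j$: the numerators $D_{(J\cup\{i\})\setminus\{i_j\}}(b)$ have valuation $0$ since their reductions $\gamma_{i_j}\neq 0$, and the positive contributions $t^{w_i}$ from $i \notin J$ cannot lower the valuation. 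Thus $c = (c_0,\dots,c_n)$ has $\val(c) = w$ and lies in $\ker \mathbf{M}(b)$, so $f = \sum c_i x^i$ has double roots $1$ and $b \neq 1$, giving $w \in \mathcal{T}(Sev_n^2)$.

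The remaining bookkeeping — checking that the chosen $b$ is genuinely $\neq 1$ (immediate since $\beta \neq 1$), that $f$ has degree exactly $n$ (ensured since $n \in J$ or $c_n = t^{w_n} \neq 0$), and that $1$ and $b$ are the \emph{only} constraints needed (they are, by the kernel characterization in Section~\ref{sec:basic-facts}) — is routine. I would also note that when $w \in \Z^{n+1}$ and $\beta$ can be chosen in $\Q$ (or in $K$ more generally), the construction stays inside $\C(t)[x]$, paralleling the remark in the proof of Theorem~\ref{teo:Imain}. The one case this argument does \emph{not} cover is when $D_J$ has no root $\beta \in K \setminus\{0,1\}$ with the non-repetition property; that residual case is exactly what Theorem~\ref{teo:IImainb} must handle separately.
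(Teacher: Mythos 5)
Your overall strategy matches the paper's: normalize $c_i = t^{w_i}$ for $i\notin J$, perturb $\beta$ to $b=\beta+ht^v+\cdots$, solve~\eqref{eq:typeIIv2} via~\eqref{eq:typeIIv3}, and conclude that $\val(c_{i_j})=0$. But the step where you argue the valuations work out is the heart of the proof, and your version of it is incorrect. Each term in~\eqref{eq:typeIIv3} has valuation $\val\bigl(D_{(J\cup\{i\})\setminus\{i_j\}}(b)\bigr) - \val(D_J(b)) + w_i$. You claim the numerators $D_{(J\cup\{i\})\setminus\{i_j\}}(b)$ have valuation $0$ ``since their reductions $\gamma_{i_j}\neq 0$'' — but the reduction of $D_{(J\cup\{i\})\setminus\{i_j\}}(b)$ is $D_{(J\cup\{i\})\setminus\{i_j\}}(\beta)$, not $\gamma_{i_j}$, and these determinants \emph{do} vanish at $\beta$ for many $i\notin J$ (Lemma~\ref{lemma:alternativa} shows that if $D_{(J\cup\{i\})\setminus\{i_1\}}(\beta)=0$ and $D_J(\beta)=0$ then all four $D_{(J\cup\{i\})\setminus\{i_j\}}(\beta)=0$). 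You also assert that the positive contributions $t^{w_i}$ cannot lower the valuation, but they certainly can, since $\val(D_J(b))>0$ is subtracted. And the remark that $\beta$ is a simple root of $D_J$ ``in the relevant cases'' (or can be arranged by choice of $h$) is not correct: the multiplicity can be $2$, and changing $h$ does not change $\val(D_J(b)) = m\cdot v$; it is the exponent $v$ that must be calibrated.

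What is actually needed, and what the paper does, is: (a) define $S = \{i\notin J : D_{(J\cup\{i\})\setminus\{i_1\}}(\beta)\neq 0\}$, use Lemma~\ref{lemma:goodsecondmonomial} to see $S\neq\emptyset$, and pick $i_5\in S$ minimizing $w_i$ over $S$; (b) use Lemma~\ref{lemma:triple_root} to bound the multiplicity $m$ of $\beta$ in $D_J$ by $2$, and set $v = w_{i_5}/m$ so that $\val(D_J(b)) = w_{i_5}$; (c) use Lemma~\ref{lemma:double_root} to show that for $e\notin S\cup J$ the multiplicity of $\beta$ in $D_{(J\cup\{e\})\setminus\{i_j\}}$ is at least $m$, so those terms have valuation $\geq w_e > 0$; (d) for $i\in S\setminus\{i_5\}$, the term has valuation $w_i - w_{i_5} > 0$. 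The unique term of valuation $0$ in~\eqref{eq:typeIIv3} is then $i=i_5$, yielding $\val(c_{i_j})=0$. None of this case analysis, nor the identification of $i_5$ and the calibration $v = w_{i_5}/m$, appears in your proposal; without it the argument does not close, so there is a genuine gap.
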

\begin{proof}
Without loss of generality, we can assume $w \in {\rm Im}_{\val}$ is a
generic point in the interior of the cone, and $w_{i_j}=0$ for all
$ 1 \le j \le 4$. By Lemma~\ref{lemma:goodsecondmonomial} the set
$S=\{i \in \cA-J \ : \ D_{(J\cup\{i\})-\{i_1\}}(\beta)\ne 0\}\ne
\emptyset$. As $w$ is a generic point, we can assume that there
exists $i_5\in S$ such that $0<w_{i_5}<w_i$ for all $ i \in
S-\{i_5\}$. Since $D_J(\beta)=0$, by
Lemma~\ref{lemma:alternativa}, $
D_{(J\cup\{i_5\})-\{i_j\}}(\beta)\ne 0$ for all $ 1 \le j \le 4$.
Note that, since $\{\beta^{i_j}\}_{j=1}^4$ are not all equal, by
Lemma~\ref{lemma:triple_root} the multiplicity of $\beta$ as a
root of $D_J$ is at most two. Then, by
Lemma~\ref{lemma:double_root} the multiplicity $m$ of $\beta$ as a
root of $D_J$ is lower or equal to the multiplicity $m_{ej}$ of
$\beta $ as a root of $D_{(J\cup\{e\})-\{i_j\}}$ for all $ e
\not\in S\cup J$ and $ 1 \le j \le 4$. Let $b= \beta
+ht^{\frac{w_{i_5}}{m}}\in \K$ such that $ D_J(b)\ne 0$ and
$f=\sum\limits_{i=0}^nc_ix^i$ such that $c_i=t^{w_i}$ for all $i
\not\in J$. We are going to see that the coefficients
$\{c_{i_j}\}_{j=1}^4$ such that $1,b$ are multiple roots of $f$
fulfill $\val(c_{i_j})=0$ for all $ 1 \le j \le 4$ and therefore
$w$ is in $\mathcal{T}(Sev_n^2)$. Note that $\val(D_J(b))=w_{i_5}$
and, for all $ e \not\in S\cup J$ and $ 1 \le j \le 4$,
$\val(D_{(J\cup\{e\})-\{i_j\}}(b))=m_{ej}\frac{w_{i_5}}{m}\ge
w_{i_5}.$ We saw in Equation~(\ref{eq:typeIIv3}) that the
solutions $(c_{i_1},c_{i_2},c_{i_3},c_{i_4})$ are of the form
$c_{i_j}=-\sum\limits_{i \not\in
J}\pm\dfrac{D_{(J\cup\{i\})-\{i_j\}}(b)}{D_J(b)}t^{w_i}$ for all $
1 \le j \le 4$. If we analyze the valuation for every term of that
sum, we can see that:
\begin{itemize}
\item If $i=i_5$, then
$\val\left(\dfrac{D_{(J\cup\{i_5\})-\{i_j\}}(b)}{D_J(b)}c_{i_5}\right)=0-
w_{i_5}+w_{i_5}=0.$
\item If $i \not\in S\cup J$, then
$\val\left(\dfrac{D_{(J\cup\{i\})-\{i_j\}}(b)}{D_J(b)}c_i\right)\ge
w_{i_5}-w_{i_5}+w_i>0$ because $w_i>0$.
\item If $i \in S$, then
$\val\left(\dfrac{D_{(J\cup\{i\})-\{i_j\}}(b)}{D_J(b)}c_i\right)=
0- w_{i_5}+w_{i}>0$ because $w_i>w_{i_5}$.
\end{itemize}
Then,
$$c_{i_j}=\pm\frac{D_{(J\cup\{i_5\})-\{i_j\}}(b)}{D_J(b)}t^{w_{i_5}}+* $$
and has valuation 0 for all $ 1 \le j \le 4$.
\end{proof}

\subsubsection{Exceptional Configurations} \label{sec:excepConfig}

In Theorem \ref{teo:excepConfig} we proved that the sets
$\sigma=\{i_1,i_2,i_3,i_4\}$ such that every root $\beta\in
K\setminus\{0,1\}$ of $D_\sigma(x)$ satisfies that there are at least
three of the powers
$\{\beta^{i_1},\beta^{i_2},\beta^{i_3},\beta^{i_4}\}$ equal, are
precisely those of the form $\sigma = (J(\times
s))(+r)=\{sj_1+r,sj_2+r, sj_3+r, sj_4+r\}$ where $s \in \N $, $r
\in \Z_{\ge 0}$ and $J=\{j_1,j_2,j_3,j_4\}$ is an exceptional
configuration (see Definition~\ref{def:exceptional
configurations}).

In Proposition~\ref{prop:exceptionalConfig} we already proved that
if $w$ is a point such that the subdivision induced by $w$ has
only one marked cell $J$ and this cell is a translation of an
exceptional configuration, then $w$ is not in
$\mathcal{T}(Sev_n^2)$. In the next theorem we will see that if
the marked cell $\sigma$ is a image under an affine map $(J(\times
s))(+r)$ of an exceptional configuration such that
$s=gcd(i_4-i_1,i_3-i_1,i_2-i_1)>1$, then $w$ is in fact in
$\mathcal{T}(Sev_n^2)$. With this result we complete the proof
that all $w$ in the interior of a type $II$ cone is in
$\mathcal{T}(Sev_n^2)$.

\begin{theorem} \label{teo:IImainb}
Let $J=\{j_1,j_2,j_3,j_4\}$ be an exceptional configuration. Let
$\Pi$ be a subdivision of $\cA$ such that the only marked cell is
$I=\{i_1,i_2,i_3,i_4\}$ where $I=(J(\times s))(+r)$ for $s\in \N$
and $r \in \Z_{\ge 0}$. Let $w\in {\rm
Im}_{\val}\subseteq\R^{n+1}$ be a point in the cone $C_\Pi$
defined by the closure of the set of points in $\R^{n+1}$ whose
Newton diagram induces the subdivision $\Pi$. If $s>1$, then $w$
is a point in $\mathcal{T}(Sev_n^2)$.
\end{theorem}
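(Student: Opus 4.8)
The plan is to follow the constructive method used for cones of types $I$ and $II$ in Theorems~\ref{teo:Imain} and~\ref{teo:IImaina}: for $w$ in the cone we will produce $f=\sum_{i=0}^n c_ix^i\in\K[x]_w$ with two prescribed double roots, by setting $c_i=t^{w_i}$ for $i\notin I$, solving the $4\times4$ system~\eqref{eq:typeIIv2} with chosen index set $I$ for the remaining $(c_{i_1},\dots,c_{i_4})$ and for the second double root $b$, and checking that these $c_{i_j}$ all have valuation $0$. Since $\mathcal{T}(Sev_n^2)$ is closed we may assume $w\in{\rm Im}_{\val}$ is generic in the interior of $C_\Pi$; as in the other proofs we may assume the marked cell $I$ has slope $0$, so $w_{i_k}=0$ for $1\le k\le 4$ and $w_j>0$ for $j\notin I$. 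The whole difference with Theorem~\ref{teo:IImaina} lies in the choice of $b$: since $I$ is an affine dilate of an exceptional configuration, $D_I$ has no root $\beta\in K\setminus\{0,1\}$ for which $\{\beta^{i_1},\dots,\beta^{i_4}\}$ is repeated at most once, so Theorem~\ref{teo:IImaina} cannot be invoked; instead we take $b$ close to a nontrivial root of unity, which is exactly what the hypothesis $s>1$ makes possible.

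The first step is to pass, via the scaling identities for the minors of $\M$ in Appendix~\ref{app:PropertiesM}, from $\M_I$ to the matrix of the exceptional configuration $J=\{0,j_2,j_3,j_4\}$ (every exceptional configuration has $j_1=0$). Writing $i_k=sj_k+r$ with $r=i_1$, elementary row and column operations give $D_I(x)=s^2\,x^{2r}\,D_J(x^s)$, and similarly each mixed minor $D_{(I\cup\{i\})\setminus\{i_k\}}$ factors through $D$ of an explicit configuration. Now fix a divisor $d>1$ of $s$ and let $\beta\in K$ be a primitive $d$-th root of unity (so $\beta\ne1$, hence the double roots $1$ and $b$ we will construct are distinct). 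Since $\beta^s=1$ and $i_k-i_1=sj_k$, we get $\beta^{i_k}=\beta^{i_1}$ for all $k$; thus the third and fourth rows of $\M_I(\beta)$ are $\beta^{i_1}$ times the first and second ones, so $\operatorname{rank}\M_I(\beta)=2$ and in particular $D_I(\beta)=0$. Moreover the multiplicity of $\beta$ as a root of $D_I$ equals the multiplicity $m_0$ of $1$ as a root of $D_J$, and from the explicit factorizations of the five exceptional determinants (each of the shape $\mathrm{const}\cdot x^{a}(x-1)^{4}\cdot(\text{a polynomial not vanishing at }1)$) one has $m_0=4$; more generally a lemma of Appendix~\ref{app:PropertiesM} (the content implicit in Lemma~\ref{lemma:mult_of_0,1}) gives that $1$ is a root of multiplicity at least $4$ of $D_K$ for \emph{every} four-element subset $K\subseteq\cA$.

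Next, pick $i_5\in\cA\setminus I$ with $d\nmid i_5-i_1$ of minimal $w_{i_5}$ among those indices for which all the mixed minors $D_{(I\cup\{i_5\})\setminus\{i_k\}}$ ($1\le k\le4$) have a \emph{simple} zero at $\beta$; this set is nonempty (note $i_1\pm1\in\cA\setminus I$ and $d\nmid 1$, so the congruence condition alone is always satisfiable), and by genericity of $w$ the minimum is attained uniquely. Set $b=\beta+ht^{v}+*$ with $v=w_{i_5}/(m_0-1)$ and $h\ne0$ chosen outside a finite set so that $D_I(b)\ne0$; then $b^s=1+s\beta^{s-1}ht^{v}+*$, so $\val(D_I(b))=m_0v$. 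In the formula~\eqref{eq:typeIIv3} for $c_{i_j}$, split the sum according to whether $d\mid i-i_1$ or not. If $d\mid i-i_1$, then all four exponents of $K=(I\cup\{i\})\setminus\{i_k\}$ are congruent to $i_1$ modulo $d$, so by the scaling identity and the multiplicity lemma $\val(D_K(b))\ge m_0v$, and that term of~\eqref{eq:typeIIv3} has valuation $\ge m_0v-m_0v+w_i=w_i>0$. If $d\nmid i-i_1$, then $\operatorname{rank}\M_{I\cup\{i\}}(\beta)=3$; for $i=i_5$ the relevant minor has a simple zero at $\beta$, so $\val(D_{(I\cup\{i_5\})\setminus\{i_k\}}(b))=v$ and that term has valuation $v-m_0v+w_{i_5}=0$, while every other term with $d\nmid i-i_1$ has valuation $\ge v-m_0v+w_i=w_i-w_{i_5}>0$. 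Hence $\val(c_{i_j})=0$ for all $j$, so $f\in\K[x]_w$; and since $c\in\ker\M(b)$ with $b\ne0,1$, the polynomial $f$ has double roots at $1$ and $b$ and lies in $Sev_n^2$, whence $w\in\mathcal{T}(Sev_n^2)$.

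I expect the main difficulty to be the order-of-vanishing bookkeeping at the root of unity $\beta$: namely establishing (i) that some index $i_5\notin I$ with $d\nmid i_5-i_1$ makes all the mixed minors $D_{(I\cup\{i_5\})\setminus\{i_k\}}$ have a simple zero at $\beta$ (the analogues of Lemmas~\ref{lemma:goodsecondmonomial} and~\ref{lemma:alternativa}, whose proof is where the specific arithmetic of the five exceptional configurations actually enters), and (ii) the lower bound $\val(D_{(I\cup\{i\})\setminus\{i_k\}}(b))\ge m_0v$ when $d\mid i-i_1$, i.e.\ that the multiplicity of a primitive $d$-th root of unity in those mixed minors is at least $m_0$ (the analogue of Lemma~\ref{lemma:double_root}, obtained from the scaling identities together with the general ``multiplicity $\ge4$ at $1$'' statement). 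These are precisely the determinant estimates packaged in Appendix~\ref{app:PropertiesM}, and with them in hand the argument above goes through uniformly for the five configurations at once.
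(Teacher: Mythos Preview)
Your approach is the same as the paper's: choose $\beta$ a primitive root of unity coming from the dilation factor, set $b=\beta+ht^{v}$ with $v=w_{i_5}/3$, and check the valuations in~\eqref{eq:typeIIv3}. Two comments.

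First, the ``main difficulties'' you flag as (i) and (ii) are not difficulties at all, and the paper dispatches them in one stroke via Lemma~\ref{lemma:valD_I}. For the mixed minor $D_{(I\cup\{i\})\setminus\{i_k\}}$ with $i\notin I$, three of the four indices lie in $I$ and are congruent to $i_1$ modulo $s$ (hence modulo your $d$). If $d\mid i-i_1$ then all four powers $\beta^{\,\cdot}$ coincide and Lemma~\ref{lemma:mult_of_0,1} gives multiplicity exactly $4$; if $d\nmid i-i_1$ then exactly three coincide and one differs, and the same lemma gives multiplicity exactly $1$. So \emph{every} index $i$ with $d\nmid i-i_1$ automatically makes all four mixed minors have a simple zero at $\beta$: your restricted set $S'$ equals $\{i:d\nmid i-i_1\}$, and no analogue of Lemmas~\ref{lemma:goodsecondmonomial}, \ref{lemma:alternativa}, \ref{lemma:double_root} is needed. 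The paper simply quotes Lemma~\ref{lemma:valD_I}, which packages both statements as the exact valuations $\val(D_{(I\cup\{i\})\setminus\{i_k\}}(b))=4v$ or $v$.

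Second, and relatedly, your writeup as it stands has a small logical gap. You define $i_5$ as the index of minimal $w$-value inside $S'$, and then assert $w_i>w_{i_5}$ for \emph{every} $i\neq i_5$ with $d\nmid i-i_1$. That inequality is only justified once you know $S'=\{i:d\nmid i-i_1\}$; otherwise an index with $d\nmid i-i_1$ but outside $S'$ could have $w_i<w_{i_5}$, and the term bound $w_i-w_{i_5}>0$ would fail. The fix is exactly the observation above. Once you drop the superfluous ``simple zero'' qualifier on $i_5$ and take $\beta$ a primitive $s$-th root of unity, your argument coincides with the paper's line for line.
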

\begin{proof}
We can assume that $w\in {\rm Im}_{\val}$ is a generic point in
the interior of the cone and that
$w_{i_1}=w_{i_2}=w_{i_3}=w_{i_4}=0$. We may also assume without
loss of generality that $i_5$ is the only index where $\min\{w_i \
: \ i \not\in I \mbox{ and } s \nmid i\}$ is attained. Let
$\beta\in K\setminus\{0,1\}$ be a primitive $s$-th root of unity and
$f=\sum\limits_{i=0}^nc_ix^i \in \K[x]$ such that $c_i=t^{w_i}$
for all $i \not\in I$. We want to find $\{c_i\}_{i \in I}\subseteq
\K$ such that $\val(c_i)=0$ for all $ i \in I$ and $f$ has two
multiple roots $1$ and $ b= \beta+*$. Take $b=\beta + ht^v$, such
that $h \in K^*$, $v=\frac{w_{i_5}}{3}$ and $D_I(b)\ne 0$.
Then the system in Equation \ref{eq:typeIIv2} has a unique
solution in $\K^4$ (presented in Equation~(\ref{eq:typeIIv3})).

By Lemma~\ref{lemma:valD_I} we know that $\val(D_I(b))=4v$,
and that for every $i \not\in I$
$\val(D_{I\cup\{i\}-\{i_j\}}(b))=v$ if $s \mid i $ and
$\val(D_{I\cup\{i\}-\{i_j\}}(b))=4v$ if $s \nmid i $. Then,
\begin{itemize}
\item $\val(\frac{D_{(I\cup\{i\})-\{i_j\}}(b)}{D_I(b)}
t^{w_{i}})=v-4v+w_i=w_i-3v \mbox{ if } s \nmid i.$
\item $\val(\frac{D_{(I\cup\{i\})-\{i_j\}}(b)}{D_I(b)} t^{w_{i}}) =
4v-4v+w_i=w_i \mbox{ if } s \mid i.$
\end{itemize}
Since $ v = \frac{w_{i_5}}{3}$, we have that:
\begin{itemize}
\item $\val(\frac{D_{(I\cup\{i_5\})-\{i_j\}}(b)}{D_I(b)}
t^{w_{i_5}})= w_{i_5}-3v=w_{i_5}-3\frac{w_{i_5}}{3}=0.$
\item $\val(\frac{D_{(I\cup\{i\})-\{i_j\}}(b)}{D_I(b)} t^{w_{i}})=
w_i-3v=w_i-w_{i_5}>0 \mbox{ if } s \nmid i$ because of the choice
of $i_5$.
\item $\val(\frac{D_{(I\cup\{i\})-\{i_j\}}(b)}{D_I(b)} t^{w_{i}})=
w_i>0 \mbox{ if } s \mid i$.
\end{itemize}
Therefore, we have that $\min_{i \not\in
I}\{\val(\frac{D_{(I\cup\{i\})-\{i_j\}}(b)}{D_I(b)} t^{w_{i}})\}$
is attained only at $i_5$ where is zero, hence $\val(c_{i_j})=0$
for all $ 1 \le j \le 4$.
\end{proof}

\subsection{Type $III$}
Finally, consider $w \in \R^{n+1}$ a point in the interior of a
cone of type $III$. That is, $w$ is such that the induced
subdivision has only one marked cell $\{i_1, i_2, i_3\}$ with one
marked point $i_2$, and a hidden tie $\{i_4,i_5\}$. Assume also,
without loss of generality, that $w_{i_j}=0$ for all $1 \le j \le
3$ and $w_k>0$ for all $ k \not\in I$. Then:
\begin{itemize}
\item $\gcd(i_2-i_1,i_3-i_1)>1$.
\item There is an integer $d>1$, which divides of $\gcd(i_2-i_1,i_3-i_1)$,
such that the set $\{w_j \ : \ j \in \cA \mbox{ and } j \not\equiv
i_1 \mod d\}$ attains its minimum twice at $\{i_4,i_5\}$.
\end{itemize}

\begin{theorem}\label{teo:SufficiencyIII}
Let $w\in \R^{n+1}$ be a point in a cone of type $III$. Then $w$
is an element of $\mathcal{T}(Sev_n^2)$.
\end{theorem}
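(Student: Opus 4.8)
The plan is to follow the constructive strategy already used for cones of types I and II: for $w$ in the relative interior of a type III cone (we may assume $w\in{\rm Im}_{\val}$, the general statement following by density and since $\mathcal{T}(Sev_n^2)$ is closed) I will exhibit an explicit $c=(c_0,\dots,c_n)\in(\K^*)^{n+1}$ with $\val(c)=w$ and $\M(b)c^{T}=0$ for a suitable $b\neq 1$; then $f=\sum c_ix^i$ has double roots at $1$ and $b$, so $f\in Sev_n^2$ and $w\in\mathcal{T}(Sev_n^2)$. Using the normalizations set up before Equation~\eqref{eq:typeIIv2}, I assume the marked cell $\{i_1,i_2,i_3\}$ lies on a lower facet of slope $0$ with $w_{i_1}=w_{i_2}=w_{i_3}=0$ and $w_j>0$ otherwise, and that the hidden tie is $\{i_4,i_5\}$ with $v:=w_{i_4}=w_{i_5}>0$ equal to the minimum of $\{w_j:\ j\not\equiv i_1\bmod d\}$, attained exactly at $i_4$ and $i_5$.

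The choice that makes things work is: let $\beta\in K$ be a primitive $d$-th root of unity (so $\beta\neq 1$ because $d>1$), set $b=\beta+h\,t^{v}$ for an arbitrary $h\in K^{*}$, take $J=\{i_1,i_2,i_3,i_4\}$, and put $c_j=t^{w_j}$ for every $j\notin J$; in particular $c_{i_5}=t^{v}$ already has the right valuation, while --- crucially --- $i_5$ remains in $\cA\setminus J$. Since $D_J=D_{\{i_1,i_2,i_3,i_4\}}\not\equiv 0$ (Lemma~\ref{lemma:B3}) and, by the minor computations of Appendix~\ref{app:PropertiesM}, $\beta$ is a \emph{simple} root of $D_J$ (three of the four indices of $J$ lie in the class of $i_1$ mod $d$, the fourth does not), we obtain $D_J(b)\neq 0$ and $\val(D_J(b))=v$, so the system~\eqref{eq:typeIIv2} has the unique solution~\eqref{eq:typeIIv3}; it remains only to check that $\val(c_{i_1})=\val(c_{i_2})=\val(c_{i_3})=0$ and $\val(c_{i_4})=v$.

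For this I would use the valuations of the $4\times 4$ minors $D_K(b)$, which by the lemmas of Appendix~\ref{app:PropertiesM} (e.g.\ Lemma~\ref{lemma:valD_I}) depend only on how the indices of $K$ distribute among the residue classes mod $d$: $\val(D_K(b))=0$ (and in fact $D_K(\beta)\neq 0$) when no class contains three indices of $K$; $\val(D_K(b))=v$ when exactly three of them lie in the class of $i_1$; and $\val(D_K(b))=4v$ when all four do. Substituting into~\eqref{eq:typeIIv3}: for $i_k\in\{i_1,i_2,i_3\}$ the summand indexed by $i=i_5$ carries the minor $D_{(\{i_1,i_2,i_3\}\setminus\{i_k\})\cup\{i_4,i_5\}}$, which has only two indices in the class of $i_1$ and hence does not vanish at $\beta$, contributing valuation $0-v+w_{i_5}=0$; every other summand contributes positive valuation (those with $i\equiv i_1\bmod d$ contribute $w_i>0$, those with $i\not\equiv i_1\bmod d$ contribute $w_i-v>0$, because after deleting $i_4$ the minimum $v$ is attained in $\cA\setminus J$ only at $i_5$); the valuation-$0$ summand being unique, it cannot cancel, so $\val(c_{i_k})=0$. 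For $i_k=i_4$ the summand $i=i_5$ carries $D_{\{i_1,i_2,i_3,i_5\}}$ (three indices in the class of $i_1$, valuation $v$) and contributes valuation $v-v+w_{i_5}=v$; the summands with $i\equiv i_1\bmod d$ carry $D_{\{i_1,i_2,i_3,i\}}$ (all four in that class, valuation $4v$) and contribute $3v+w_i>v$, and the remaining ones contribute $w_i>v$; again the minimal term is unique, so $\val(c_{i_4})=v$. Hence $\val(c)=w$, and $w\in\mathcal{T}(Sev_n^2)$.

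The place where the definition of a type III cone is genuinely used --- and, in my view, the heart of the matter --- is the \emph{hidden tie}: the minimum of $\{w_j:\ j\not\equiv i_1\bmod d\}$ has to be attained at the two indices $i_4,i_5$, so that one of them ($i_4$) can be absorbed into $J$ while the other ($i_5$) survives in $\cA\setminus J$ as the (now unique) minimizer, which is precisely what pins $\val(c_{i_4})$ to $v$ and the $\val(c_{i_k})$ to $0$; if the minimum were attained only once the construction would collapse, consistently with Theorem~\ref{teo:firstInclusion}. The real technical burden of the proof thus sits in Appendix~\ref{app:PropertiesM}: computing the multiplicity of $\beta$ as a root of each $D_K$ from the rank of $\M_K(\beta)$ (equivalently, from the way the columns $(1,k,\beta^{k},k\beta^{k})^T$ of $\M_K(\beta)$ collapse when powers $\beta^{k}$ coincide), and in particular that $D_K(\beta)\neq 0$ as soon as no residue class mod $d$ contains three indices of $K$. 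Granting these, no further genericity is required beyond $h\neq 0$ (each surviving leading coefficient is a nonzero ratio of leading coefficients of nonvanishing minors), and the same computation, now with $\cA\setminus J=\{i_5\}$ a single summand, also handles the extremal case $n=4$, thereby completing the proof of Theorem~\ref{teo:main}.
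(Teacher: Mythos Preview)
Your proposal is correct and follows the same route as the paper: take $b=\beta+ht^{v}$ with $\beta$ a primitive $d$-th root of unity and $v=w_{i_4}=w_{i_5}$, set $J=\{i_1,i_2,i_3,i_4\}$ and $c_j=t^{w_j}$ for $j\notin J$, solve~\eqref{eq:typeIIv2}, and read off the valuations of the four remaining coefficients from the valuations of the minors. The paper differs only in bookkeeping (it splits the right-hand side of~\eqref{eq:typeIIv2} into a piece coming from indices $s\equiv i_1\bmod d$ with small $w_s$ and the rest, and applies Cramer's rule to each piece) rather than your direct term-by-term reading of~\eqref{eq:typeIIv3}.

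One point you should tighten: the assertion ``$\val(D_K(b))=0$ whenever no residue class mod $d$ contains three indices of $K$'' is \emph{not} a lemma in Appendix~\ref{app:PropertiesM}; Lemma~\ref{lemma:valD_I} only treats the cases where three or four of the indices lie in one class. What your argument actually uses is the single case $K=(\{i_1,i_2,i_3\}\setminus\{i_k\})\cup\{i_4,i_5\}$, which always has exactly two indices in the class of $i_1$; there a two-block row reduction gives $D_K(\beta)=\pm(i_a-i_b)(i_5-i_4)(\beta^{i_4}-\beta^{i_1})(\beta^{i_5}-\beta^{i_1})\neq 0$ directly, since $i_4,i_5\not\equiv i_1\bmod d$. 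The paper carries out precisely this computation in its $c^{(2)}$ step, and you should do the same rather than attribute it to the appendix. For all the remaining summands you only need the lower bound $\val(D_K(b))\ge 0$, so even if some of those minors did vanish at $\beta$ your inequalities still hold.
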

\begin{proof}
Without loss of generality, we assume $w\in {\rm Im}_{\val}$ a
generic point in the interior of the cone and
$w_{i_1}=w_{i_2}=w_{i_3}=0$ where $\{i_1, \dots, i_5,d\}$ are as
in the notation above. Let $\beta$ be a primitive root of unity of
order $d$ and $b=\beta+t^{w_{i_4}}$.
Note that we can write $b^{dn}=1+dn\beta^{-1}t^{w_{i_4}}+*$ for all $ n \in \N$.

Let $J=\{i_1, i_2,i_3,i_4\}$. Using Lemma~\ref{lemma:valD_I}, $\val(D_J(b))=w_{i_4}$.

To prove that there exists $f=\sum\limits_{i=0}^nc_ix^i$ with
double roots $1$ and $b$ in $ Sev_n^2\cap \K[x]_w$, we need to {see}
that there exist a vector of coefficients $(c_i)_{i \in \cA}$ that
satisfies Equation (\ref{eq:typeIIv2}). As the minor $D_J(b)$ is
not zero, there exists a unique vector of solutions $c=(c_{i_1},
c_{i_2},c_{i_3},c_{i_4}) \in \K^4$ obtained by taking $c_i=t^{w_i}$
for all $ i \not\in J$. It only remains to be seen that
$\val(c_{i_j})=0$ for all $ 1 \le j \le 3$ and
$\val(c_{i_4})=w_{i_4}$.

Let $S$ be the set of all $ s \in \cA$ such that $s\equiv i_1 \
\mod d$ and $0<\val(w_s)\le w_{i_4}$. {In particular, because of the second defining condition on $S$ we have $i_1,i_2,i_3 \not\in S$.} As $w$ is generic, we can
assume that either $S=\emptyset$ or $\{s \in S \ : \ w_s \le w_k
\mbox{ for all } k \in S\}=\{s_0\}$. We re-write the system of
Equation (\ref{eq:typeIIv2}) as
\begin{equation}\label{eq:typeIII}
\mathbf{M}_J(b) \cdot c^T=-\mathbf{M}_{S}(b)\cdot (t^{w_s})_{s \in
S}^T - \mathbf{M}_{\cA-(S\cup J)}(b)(t^{w_j})^T_{j \not\in S\cup
I},
\end{equation}
we are going to find the valuation of the solutions
$c^{(1)}=(c_{i_j}^{(1)})_{j=1}^4$ and
$c^{(2)}=(c_{i_j}^{(2)})_{j=1}^4$ for the following associate
systems:
\begin{equation}\label{eq:primera_parte}
\mathbf{M}_J(b) \cdot (c^{(1)})^T= -\mathbf{M}_{S}(b)\cdot
(t^{w_s})_{s \in S}^T =-(
t^{w_{s_0}},s_0t^{w_{s_0}},b^{s_0}t^{w_{s_0}},s_0b^{s_0}t^{w_{s_0}})^T,\end{equation}

\begin{equation}\label{eq:segunda_parte}\begin{split}
\mathbf{M}_J(b) \cdot (c^{(2)})^T &= -\mathbf{M}_{\cA-(S\cup
J)}(b)\cdot (t^{w_j})_{j \not\in S\cup I}^T \\ &=-(t^{w_{i_5}}+
*,i_5t^{w_{i_5}}+*,b^{i_5}t^{w_{i_5}}+*,i_5b^{i_5}t^{w_{i_5}}+*)^T.
\end{split}
\end{equation}
We denote by $\det(\M_{J'}(b) |\, C)$
the determinant of the matrix in $\K^{4 \times 4}$ where the first
columns are given by the matrix $\mathbf{M}_{J'}(b)$ (for some set
$J'$ {of cardinal 3}) and the last one is a fixed column $C$. In a similar way $\det(C\,|\, \M_{J'}(b))$ is the determinant of the matrix
$\mathbf{M}_{J'}(b)$ extended with a first column $C$.

First, if $S\neq 0$, we solve Equation~(\ref{eq:primera_parte}).
Consider the case of $c_{i_1}^{(1)}$ (as $s_0\equiv i_1 \mod d$ we
can use Lemma~\ref{lemma:valD_I} and we have, for $C=
(t^{w_{s_0}},s_0t^{w_{s_0}},b^{s_0}t^{w_{s_0}},s_0b^{s_0}t^{w_{s_0}})
$, that {\small$$\det(C\,|\, \mathbf{M}_{\{i_2,i_3,i_4\}}(b))=
{t^{w_{s_0}}\det(\mathbf{M}_{\{s_0,i_2,i_3,i_4\}}(b))}
$$} has valuation $w_{s_0}+w_{i_4}$. Therefore, by Cramer's rule
$$\val(c_{i_1}^{(1)})=w_{s_0}+w_{i_4}-w_{i_4}=w_{s_0}>0.$$

As $i_1\equiv i_2\equiv i_3 \mod d,$ we can prove equivalently
that $\val(c_{i_2}^{(1)})=w_{s_0}>0$ and
$\val(c_{i_3}^{(1)})=w_{s_0}>0$. The case of $c_{i_4}$ is slightly
different because we are replacing the fourth column and
$s_0\equiv i_j \mod d$ for all $ 1 \le j \le 3$. Using
again Lemma~\ref{lemma:valD_I} and
$C=(t^{w_{s_0}},s_0t^{w_{s_0}},b^{s_0}t^{w_{s_0}},s_0b^{s_0}t^{w_{s_0}}
)$, the valuation
$\val(\det(\mathbf{M}_{\{i_1,i_2,i_3\}}(b)|\,C))=
w_{s_0}+4w_{i_4}$, and therefore $\val(c_{i_4}^{(1)})\ge
w_{s_0}+4w_{i_4}-w_{i_4}> w_{i_4}$.

Consider now the case of $c_{i_1}^{(2)}$, we have to take into
account that $i_5 \not\equiv i_1 \mod d$. Then, considering the
vector
$C=(t^{w_{i_5}}+*,i_5t^{w_{i_5}}+*,b^{i_5}t^{w_{i_5}}+*,i_5b^{i_5}t^{w_{i_5}}+*)$,
$$\det(C\,|\,\mathbf{M}_{\{i_2,i_3,i_4\}}(b))=
t^{w_{i_5}}
\beta^{i_5+i_2}(i_3-i_2)(i_5-i_4)(\beta^{i_2-i_5}-1)(\beta^{i_4-i_2}-1)+*$$
has valuation $w_{i_5}=w_{i_4}$, and hence
$\val(c_{i_1}^{(2)})=0$. Again, as $i_1\equiv i_2 \equiv i_3 \mod
d$ we have $\val(c_{i_2}^{(2)})=0$ and $\val(c_{i_3}^{(2)})=0$.
For the case of $c_{i_4}^{(2)}$, {we can find the valuation similarly to
Lemma~\ref{lemma:valD_I}}. Taking
$C=(t^{w_{i_5}}+*,i_5t^{w_{i_5}}+*,b^{i_5}t^{w_{i_5}}+*,i_5b^{i_5}t^{w_{i_5}}+*)$,
we obtain $$\det(\mathbf{M}_{\{i_1,i_2,i_3\}}(b)|\,C)=
-t^{w_{i_5}+w_{i_4}}\beta^{2i_1-1}(i_3-i_2)(i_3-i_1)(i_2-i_1)(\beta^{i_5-i_1}-1)
+*.$$ Hence $\val(c_{i_4}^{(2)})=w_{i_5}+w_{i_4}-w_{i_4}=w_{i_4}$.

Taking
$c=(c_{i_1},c_{i_2},c_{i_3},c_{i_4})=(c_{i_1}^{(1)}+c_{i_1}^{(2)},
\dots, c_{i_4}^{(1)}+c_{i_4}^{(2)})$, this is the solution for
Equation~(\ref{eq:typeIII}). As $\val(c_{i_j}^{(1)})>0$ and
$\val(c_{i_j}^{(2)})=0$ for all $ 1 \le j \le 3$,
$\val(c_{i_4}^{(1)})>w_{i_4}$ and $\val(c_{i_4}^{(2)})=w_{i_4}$,
then every coordinate of $c$ satisfies $\val(c_{i_j})=w_{i_j}$ for
all $ 1 \le j \le 4$ as wanted.
\end{proof}

\appendix

\section{Proof of Lemma~\ref{lemma:minimal}}

We give here the proof of Lemma~\ref{lemma:minimal}, that asserts
that the tropicalization of a linear homogeneous ideal $I$ over a
valuated field $\K$ with infinite residue field is determined by the circuits.

\begin{proof} We denote by
$L(I)$ the intersection $\bigcap_{\ell \in I : \ell
\textrm{ linear form}}V(trop(\ell))$.
Let us first prove the equality
$\mathcal{T}(I) = L(I)$.
Clearly,
$\mathcal{T}(I)\subseteq L(I)$.
We will see the other inclusion by induction
on the codimension of $V=V(I)\cap(\K^*)^n$.

{Note that $V=\emptyset$ if and only if $V(I)$ is contained in a hyperplane $x_i=0$ and so $x_i\in I$ and $\mathcal{T}(I)=L(I)=\emptyset$. Hence, we can assume that $V\neq \emptyset$.}

If codim$(V)=1$, then
$V$ is a hyperplane and there exists a linear form $\ell$ such
that $V=V(\ell)\cap(\K^*)^n$. In this case is clear that
$\mathcal{T}(I)=V(trop(\ell)).$
If the result is true for every codimension less than or equal to
$r$, let us take $V$ of codimension $r+1$. Let $ p \in L(I)$.
If there exists $e_i$ in the canonical basis
such that $e_i \in V(I)$ for some $i \in \{1, \dots, n\}$, then
for all $\ell \in I$ linear form,
$\ell(x)=\sum_{j=1}^n\ell_jx_j$ with $\ell_i=0$. Let
$\pi_i: \K^n \longrightarrow \K^{n-1}$ be the projection
$\pi_i(x_1, \dots, x_n)=(x_1, \dots, x_{i-1},x_{i+1}, \dots,
x_n)$. Then, $p \in \mathcal{T}(I)$ if and only if $\pi_i(p) \in
\mathcal{T}(\pi_i(V))$ and the result follows by induction in the
codimension.

If $e_i \not\in V(I)$ for all $ 1 \le i \le n$, $V(I)\subsetneq
V(I)\oplus\langle e_i \rangle$ for all $i$.
By our inductive hypothesis, $\mathcal{T}(V(I)\oplus\langle e_i\rangle)$
equals the intersection of $V(trop(\ell))$, for all linear forms $\ell$
with $ V(I)\oplus\langle
e_i\rangle \subseteq \{\ell=0\}$.
 Therefore, $ p
\in \bigcap_{i=1}^n\mathcal{T}(V(I)\oplus\langle
e_i\rangle)$. Without loss of generality we can assume $p=(0,
\dots, 0)$. Hence, for each $1 \le i \le n$ there is a point $u_i
= (u_{i1},\dots , u_{in})\in V(I)$ such that for all $i \neq j$,
$\val(u_{ij}) = 0$. As codim$(V)>1$, the matrix $U$ with
$U_{ij}=u_{ij}$ for all $ 1 \le i,j \le n$ has rank at most
$n-2$. Then, $\det(U)=0$, and so there exist $i \in \{1, \dots, n\}$ such
that $\val(u_{ii})\ge 0$. Since the minor {obtained
from the matrix $U$ deleting the $i$-th row and column} is also
singular, there must be another element $j \in \{0, \dots,
n\}-\{i\}$ with $\val(u_{jj})\ge 0$. Without loss of
generality, let $\{i,j\}=\{1,2\}$. If $\val(u_{11})=0$ or
$\val(u_{22})=0$, then $p=\val (u_1) \in \mathcal{T}(I)$ or
$p=\val (u_2) \in \mathcal{T}(I)$. If not, we can take a generic
$\lambda\in\K^*$ of valuation zero such that $trop(u_1+\lambda
u_2)=p$ and therefore $p \in \mathcal{T}(I)$. In fact, let
$\lambda$ be generic so $\val (u_1+\lambda u_2)_i= 0$ for all $3
\le i\le n$. As $\val (u_{11})>0$ and $\val (\lambda u_{21})=0$,
then $\val (u_{11}+\lambda u_{21})=0$. Similarly $\val
(u_{12}+\lambda u_{22})=0$.

To see that it is enough to consider the circuits in $I$ to describe
$\mathcal{T}(I)$, recall
that, for each minimal support, there is only (up to multiplication
by a constant) one linear form in $I$. Let us call $\{\ell'_1, \dots,
\ell'_r\}$ linear forms generating $I$. Clearly,
$\mathcal{T}(I)\subseteq \cap_{i=1}^r V(trop(\ell'_i)).$
Given $p \not\in \mathcal{T}(I)$, there is a
linear form $\ell \in I$ such that $p \not\in V(trop(\ell))$. It
suffices to check that we can take $\ell$ with minimal support.
Let $\ell_1:=\ell$. If $\ell_1$ does not have minimal support,
then there exists a linear form $g_1 \in I$ whose support is
contained in the support of $\ell_1$. If $p \not\in V(trop(g_1))$,
we are done. If $ p \in V(trop(g_1))$, let $x_{m_1}$ and $x_{m_2}$
be different variables such that the associated linear forms in $trop(\ell_1)$
and $trop(g_1)$ attain
respectively the minimum at $p$. We can take $\ell_2$ a linear
combination of $\ell_1$ and $g_1$ to make zero the monomial
$x_{m_2}$ in $\ell_2$ without modifying where $trop(\ell_2)$ attains its
minimum, and proceed recursively.
\end{proof}

\section{The matrix $\mathbf{M}$ and its minors}\label{app:PropertiesM}

In this section, we prove all the claims about the matrix $\M$ in
Definition~\ref{def:matrixM(x)}, its roots, rank etc. used in the
paper. First of all, by the following lemma we can assume (when
needed, to simplify the notation) that $i_1=0$.

\begin{lemma} \label{lemma:i=0} With the previous notation, the minor $ D_J $ is
invariant by translations of $i_1,i_2, i_3$ and $i_4$ (up to a
monomial factor). More specifically, for $s
\in \N$
\[x^{2s}D_J(x)=D_{J(+s)}(x) \mbox{ and } D_{J(\times s)}(x)=s^2\cdot D_J(x^s)\]
Also, given positive $i < j< k$,
 {\small$$
 D_{\{0,i,j,k\}}(x)
=i k \, x^{i}(x^{k-i}-1)(x^{j}-1)
- i j \, x^{i}(x^{j-i}-1)(x^{k}-1)-
j k \, x^{j}(x^{k-j}-1)(x^{i}-1),$$}
and we have the equality
\[ D_{\{0,i,j,k\}}(x) = x^{i+j+k} \, D_{\{0,i,j,k\}}(x^{-1}).\]
\end{lemma}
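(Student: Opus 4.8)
The plan is to prove the four assertions by elementary row and column operations on the $4\times4$ matrix $\M_J$, whose $\ell$-th column is $(1,\ i_\ell,\ x^{i_\ell},\ i_\ell x^{i_\ell})^{T}$. For the translation identity, start from $\M_J$ and perform successively: add $s$ times the first row to the second (this turns each $i_\ell$ into $i_\ell+s$ without changing the determinant), add $s$ times the third row to the fourth (this turns each $i_\ell x^{i_\ell}$ into $(i_\ell+s)x^{i_\ell}$, again without changing the determinant), and finally multiply the third and fourth rows each by $x^{s}$ (multiplying the determinant by $x^{2s}$ in total). The matrix so obtained has $\ell$-th column $(1,\ i_\ell+s,\ x^{i_\ell+s},\ (i_\ell+s)x^{i_\ell+s})^{T}$; since $i_1<\dots<i_4$ implies $i_1+s<\dots<i_4+s$, this is precisely $\M_{J(+s)}$, and hence $D_{J(+s)}(x)=x^{2s}D_J(x)$. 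The dilation identity is proved in the same spirit: replacing $x$ by $x^{s}$ in $\M_J$ gives the matrix with $\ell$-th column $(1,\ i_\ell,\ (x^{s})^{i_\ell},\ i_\ell(x^{s})^{i_\ell})^{T}$, and multiplying its second and fourth rows each by $s$ (multiplying the determinant by $s^{2}$) turns it into $\M_{J(\times s)}(x)$; thus $D_{J(\times s)}(x)=s^{2}D_J(x^{s})$.

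For the closed form of $D_{\{0,i,j,k\}}(x)$, subtract the column of index $0$, namely $(1,0,1,0)^{T}$, from the three other columns and expand the resulting determinant along its first row. This reduces $D_{\{0,i,j,k\}}(x)$ to the $3\times3$ determinant $\det\begin{pmatrix} i & j & k\\ x^{i}-1 & x^{j}-1 & x^{k}-1\\ i x^{i} & j x^{j} & k x^{k}\end{pmatrix}$; expanding this along its first row and grouping the resulting monomials according to the coefficients $ij$, $ik$, $jk$ gives exactly the three summands in the statement. This last grouping is a finite, purely mechanical verification (and is among the identities checked by the code in the last appendix); I expect it to be the only slightly laborious step of the proof, though it requires no idea beyond careful bookkeeping.

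Finally, for the palindromic identity, replace $x$ by $x^{-1}$ in $\M_{\{0,i,j,k\}}$ and then, for each $m\in\{0,i,j,k\}$, multiply the column of index $m$ by $x^{m}$, so that the determinant is multiplied by $x^{0+i+j+k}=x^{i+j+k}$. The column of index $m$, which after the substitution was $(1,\ m,\ x^{-m},\ m x^{-m})^{T}$, becomes $(x^{m},\ m x^{m},\ 1,\ m)^{T}$, that is, the original column $(1,\ m,\ x^{m},\ m x^{m})^{T}$ of $\M$ with its first and third entries interchanged and its second and fourth entries interchanged. Hence the resulting matrix is $\M_{\{0,i,j,k\}}$ with its rows $1,2,3,4$ reordered as $3,4,1,2$; this row permutation is $(1\,3)(2\,4)$, which is even, so the determinant is unchanged, and therefore $x^{i+j+k}D_{\{0,i,j,k\}}(x^{-1})=D_{\{0,i,j,k\}}(x)$. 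One could alternatively read this identity directly off the closed form just established, since substituting $x^{-1}$ for $x$ and multiplying by $x^{i+j+k}$ merely permutes the three summands among themselves.
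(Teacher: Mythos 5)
Your proof is correct, and it is essentially the same approach as the paper's (which simply declares the lemma "a straightforward computation from the properties of the determinant"); your row and column operations, and the cofactor expansion for the closed form, are a careful fleshing-out of that remark. All four reductions — adding rows and rescaling for translation, substituting $x\mapsto x^s$ and rescaling rows for dilation, the column subtraction and $3\times3$ expansion for the closed form, and the even row permutation $(1\,3)(2\,4)$ for the palindromic identity — check out.
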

\begin{proof} It is a straightforward computation from the properties
of the determinant.
\end{proof}

\begin{lemma} \label{lemma:B3}
Let $J=\{i_1, i_2, i_3, i_4\}$ be a subset of $\cA$ with
$i_1<i_2<i_3<i_4$. Then $D_J$ is not the zero polynomial, its
degree (as a polynomial in $x$) is $i_3+i_4$ and its order at the
origin equals $i_1+i_2$. Moreover, the matrix $\mathbf{M}(x)$ has
rank 4 if $x\neq 0,1$ and $n\ge 3$.
 \end{lemma}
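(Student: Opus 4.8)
The plan is to reduce the first three assertions to the closed formula for $D_{\{0,i,j,k\}}$ recorded in Lemma~\ref{lemma:i=0} and then read off the coefficients of the extreme‑degree monomials; the rank statement will then follow by exhibiting a single nonvanishing $4\times4$ minor. No step is really an obstacle here: the only thing to watch is that no cancellation occurs at the top and bottom degrees in the expansion of $D_{\{0,i,j,k\}}$, which is immediate from $0<i<j<k$.

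First I would use the translation identity $x^{2s}D_J(x)=D_{J(+s)}(x)$ of Lemma~\ref{lemma:i=0} with $s=i_1$ to write $D_J(x)=x^{2i_1}\,D_{\{0,\,i_2-i_1,\,i_3-i_1,\,i_4-i_1\}}(x)$, reducing everything to $D_{\{0,i,j,k\}}$ for integers $0<i<j<k$ (with $i=i_2-i_1$, $j=i_3-i_1$, $k=i_4-i_1$). Expanding the three products in
\[
D_{\{0,i,j,k\}}(x)=ik\,x^{i}(x^{k-i}-1)(x^{j}-1)-ij\,x^{i}(x^{j-i}-1)(x^{k}-1)-jk\,x^{j}(x^{k-j}-1)(x^{i}-1),
\]
the only monomials that appear are $x^{i},x^{j},x^{k},x^{i+j},x^{i+k},x^{j+k}$. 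Since $0<i<j<k$, the top‑degree monomial $x^{j+k}$ comes only from the first two products, with total coefficient $ik-ij=i(k-j)\neq0$; likewise the lowest‑degree monomial $x^{i}$ comes only from the first two products, again with coefficient $i(k-j)\neq0$ (the third product contributes no monomial of degree $\ge j+k$ or $\le i$). Hence $D_{\{0,i,j,k\}}$ is a nonzero polynomial of degree $j+k$ and order $i$ at the origin, so $D_J$ is nonzero of degree $2i_1+(i_3-i_1)+(i_4-i_1)=i_3+i_4$ and order $2i_1+(i_2-i_1)=i_1+i_2$. (This is consistent with the reflection identity $D_{\{0,i,j,k\}}(x)=x^{i+j+k}D_{\{0,i,j,k\}}(x^{-1})$, which forces degree plus order to equal $i+j+k$.)

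For the last claim, $\mathbf M(x)$ has four rows, so $\operatorname{rank}\mathbf M(x)\le4$ always, and it suffices to produce one nonzero $4\times4$ minor. Since $n\ge3$ we have $\{0,1,2,3\}\subseteq\cA$, and a short expansion of the corresponding determinant (for instance after the row operation replacing $R_3$ by $R_3-R_1$) gives $D_{\{0,1,2,3\}}(x)=x(x-1)^{4}$, which is nonzero whenever $x\neq0,1$. Therefore $\operatorname{rank}\mathbf M(x)=4$ for all such $x$.
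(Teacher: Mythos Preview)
Your proof is correct and follows essentially the same route as the paper: reduce to $i_1=0$ via the translation identity of Lemma~\ref{lemma:i=0}, read off the leading and trailing coefficients $(i_2-i_1)(i_4-i_3)$ of $D_J$ from the explicit formula, and then exhibit $D_{\{0,1,2,3\}}(x)=x(x-1)^4$ to get the rank statement. Your version merely spells out the monomial bookkeeping more explicitly than the paper does.
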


\begin{proof}
By Lemma~\ref{lemma:i=0},
\[D_J(x) = x^{2 i_1} \, D_{\{0, i_2-i_1,i_3-i_1,i_4-i_1\}} = (i_2-i_1)(i_4-i_3) \left(x^{i_3+i_4} + \dots +
x^{i_1+i_2}\right).\] If we consider the submatrix $\mathbf{M}_J
\in (\Z[x])^{4\times4}$ given by $J=\{0,1,2,3\}$, it holds that
$D_J=x(x-1)^4$, and hence rank$(\mathbf{M}(x))=4$ for all $x\neq
0,1$.
\end{proof}

\begin{lemma} \label{lemma:rank2}
Let $J=\{i_1, i_2, i_3, i_4\}$ be a subset of $\cA$ such that
$i_1<i_2<i_3<i_4$. Let $\beta \in K$. Then, the matrix
$\mathbf{M}_J(\beta)$ has rank 2 if and only if
$\beta^{i_1}=\beta^{i_2}=\beta^{i_3}=\beta^{i_4}$.
\end{lemma}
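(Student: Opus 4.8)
The plan is to exploit the block structure of $\mathbf{M}_J(\beta)$. Writing $J=\{i_1<i_2<i_3<i_4\}$, its four rows are $(1,1,1,1)$, $(i_1,i_2,i_3,i_4)$, $(\beta^{i_1},\beta^{i_2},\beta^{i_3},\beta^{i_4})$ and $(i_1\beta^{i_1},i_2\beta^{i_2},i_3\beta^{i_3},i_4\beta^{i_4})$. Since the $i_j$ are pairwise distinct, the first two rows are linearly independent, so $\mathrm{rank}\,\mathbf{M}_J(\beta)\ge 2$ for every $\beta\in K$; hence the content of the statement is exactly the characterization of when the rank fails to exceed $2$.

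For the implication $(\Leftarrow)$: if $\beta^{i_1}=\beta^{i_2}=\beta^{i_3}=\beta^{i_4}=:\lambda$, then the third row is $\lambda$ times the first row and the fourth row is $\lambda$ times the second row (its entry in column $i_j$ being $i_j\beta^{i_j}=\lambda i_j$), so the row space is at most two-dimensional and, together with the lower bound above, the rank equals $2$.

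For $(\Rightarrow)$: assume $\mathrm{rank}\,\mathbf{M}_J(\beta)=2$. As the first two rows already span a two-dimensional space, the third and fourth rows lie in their span, say
\[ (\beta^{i_j})_{j=1}^4 = a\,(1,1,1,1)+b\,(i_1,i_2,i_3,i_4), \qquad (i_j\beta^{i_j})_{j=1}^4 = c\,(1,1,1,1)+d\,(i_1,i_2,i_3,i_4) \]
for some $a,b,c,d\in K$. Reading off the $j$-th coordinate gives $\beta^{i_j}=a+b\,i_j$ and $i_j\beta^{i_j}=c+d\,i_j$; eliminating $\beta^{i_j}$ yields $b\,i_j^2+(a-d)\,i_j-c=0$ for $j=1,2,3,4$. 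The polynomial $b\,t^2+(a-d)\,t-c\in K[t]$ has degree at most $2$ but four distinct roots $i_1,i_2,i_3,i_4$ in $K$ (here we use that $K$ has characteristic zero, so the integers $i_j$ are distinct in $K$), hence it is identically zero. In particular $b=0$, so $\beta^{i_j}=a$ for all $j$, i.e. the four powers coincide.

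The argument is elementary and I do not foresee a genuine obstacle; the one point to double-check is that the reduction ``rows $3$ and $4$ lie in the span of rows $1$ and $2$'' is valid for \emph{every} $\beta$, including $\beta=0$ (where $\beta^{i_1}$ can equal $1$ while $\beta^{i_2}=\beta^{i_3}=\beta^{i_4}=0$ when $i_1=0$). This is automatic, since rows $1$ and $2$ are independent and the whole matrix has rank $2$; and it is consistent with Lemma~\ref{lemma:i=0}, which shows $D_J$ is a nonzero polynomial so that $\mathbf{M}_J(\beta)$ drops to rank $\le 3$ only on a finite set.
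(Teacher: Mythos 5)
Your proof is correct. Both you and the paper begin by observing that rows $1,2$ are independent (so rank $\ge 2$) and that the $(\Leftarrow)$ direction is immediate. For $(\Rightarrow)$, however, the paper first normalizes to $i_1=0$ via Lemma~\ref{lemma:i=0} and then inspects three particular $3\times 3$ minors — $\det(\mathbf{M}(4,4))$, $\det(\mathbf{M}(3,4))$, $\det(\mathbf{M}(3,3))$ — to deduce the equalities one by one, after first disposing of $\beta=0$ as a trivial case. Your argument instead uses the span directly: since rows $1,2$ already span the $2$-dimensional row space, rows $3$ and $4$ must be affine-linear in the $i_j$, which forces the quadratic $b\,t^2+(a-d)\,t-c$ to vanish at four distinct points of $\Z\hookrightarrow K$ and hence to be identically zero (using $\mathrm{char}\,K=0$). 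This buys you a cleaner, uniform proof: no normalization to $i_1=0$, no explicit minor computations, and the $\beta=0$ case is handled automatically rather than by a separate remark. The two proofs are of comparable length, but yours makes the underlying Vandermonde/interpolation mechanism more transparent.
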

\begin{proof}We can assume $\beta\ne 0$ since the result is trivial otherwise.
It is clear that $\beta^{i_1}=\beta^{i_2}=\beta^{i_3}=\beta^{i_4}$
is a sufficient condition for rank$(\mathbf{M}_J(\beta))=2$. To
see that it is necessary, using Lemma~\ref{lemma:i=0} we can
assume $i_1=0$. Let us note that if the matrix
$\mathbf{M}_J(\beta)$ has rank 2, then all the $3 \times 3$ minors
are zero including
$\det(\mathbf{M}(4,4))=i_2(\beta^{i_3}-1)-i_3(\beta^{i_2}-1),
\det(\mathbf{M}(3,4))=i_2i_3(\beta^{i_3}-\beta^{i_2})$ and $
\det(\mathbf{M}(3,3))i_2i_4(\beta^{i_4}-\beta^{i_2})$, where
$\mathbf{M}(i,j)$ is the submatrix obtained by removing the row
$i$ and column $j$ from $M_J(\beta)$. From the last two,
$\beta^{i_3}=\beta^{i_2}=\beta^{i_4}$ and using this in the the
first minor we have $\beta^{i_2}=1=\beta^{i_1}$.
\end{proof}

\begin{lemma}\label{lemma:mult_of_0,1} Let $J=\{i_1, i_2, i_3, i_4\}$
be a subset of $ \cA$ such that $i_1<i_2<i_3<i_4$ and $\beta \in
K$ a root of unity. Consider the powers $\beta^{i_1}, \dots,
\beta^{i_4}$ of $\beta$. If all four are equal (e.g. $\beta=1$),
then $\beta$ is a root of $D_J$ of multiplicity $4$. If exactly
three of the powers are equal and the remaining one is different,
then the multiplicity of $\beta$ is 1.
\end{lemma}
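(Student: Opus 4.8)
The plan is to normalize via Lemma~\ref{lemma:i=0} and then compute directly. A translation $J\mapsto J(+s)$ multiplies $D_J$ by a monomial, so it changes neither the multiplicity of the nonzero root $\beta$ nor which of the powers $\beta^{i_m}$ coincide; hence I assume $i_1=0$ and set $i=i_2<j=i_3<k=i_4$, so that by Lemma~\ref{lemma:i=0}
\[D_J(x)=ik\,x^{i}(x^{k-i}-1)(x^{j}-1)-ij\,x^{i}(x^{j-i}-1)(x^{k}-1)-jk\,x^{j}(x^{k-j}-1)(x^{i}-1).\]

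The first step, used in both parts, is the claim that for \emph{every} $4$-subset $J$ the polynomial $D_J$ has a zero of order exactly $4$ at $x=1$. For this I exploit the confluent structure of $\mathbf{M}_J$: with $R(y)=(y^{i_1},\dots,y^{i_4})$ one has $R(1)=(1,\dots,1)$ and $R'(1)=(i_1,\dots,i_4)$, which are the first two rows of $\mathbf{M}_J(y)$, while the last two rows are $R(y)$ and $yR'(y)$; thus $D_J(y)=y\det\bigl(R(1),R'(1),R(y),R'(y)\bigr)$. Taylor-expanding $R(y)$ at $y=1$ and doing the obvious row reductions (replace the third row by $R(y)-R(1)-(y-1)R'(1)$ and the fourth by $R'(y)-R'(1)$) pulls out $(y-1)^{3}$; the remaining factor vanishes once more at $y=1$ since two of its rows become proportional there, and one further differentiation gives $D_J(y)=\tfrac{1}{12}(y-1)^{4}\det\bigl(R(1),R'(1),R''(1),R'''(1)\bigr)+O((y-1)^{5})$. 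The rows of the last determinant are the falling factorials of $i_1,\dots,i_4$, so a unitriangular change of basis turns it into the Vandermonde determinant $\prod_{a<b}(i_b-i_a)\neq0$, which proves the claim.

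For the first assertion, all four powers equal means (after our normalization) $\beta^{i}=\beta^{j}=\beta^{k}=1$. Put $g=\gcd(i,j,k)$; then $\beta^{g}=1$, so $\beta$ is a simple root of $x^{g}-1$, and writing $J=J'(\times g)$ with $J'=\{0,i/g,j/g,k/g\}$ we get $D_J(x)=g^{2}D_{J'}(x^{g})$ by Lemma~\ref{lemma:i=0}; hence $\mathrm{mult}_\beta D_J=\mathrm{mult}_1 D_{J'}=4$ by the claim. For the second assertion, suppose exactly three of the powers coincide. I first read $D_J(\beta)=0$ off the displayed formula: in each of the finitely many configurations for which three powers coincide, every one of the three summands contains a factor $x^{m}-1$ with $\beta^{m}=1$. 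To get multiplicity exactly $1$, differentiate the formula at $\beta$: in each summand exactly one of its two factors $x^{m}-1$ vanishes at $\beta$ (a short check using $\beta^{i},\beta^{j},\beta^{k}\in\{1,\eta\}$ and $i<j<k$), so only one term of its product-rule expansion survives, and collecting the three contributions gives a nonzero scalar (involving $\beta$ and the repeated power) times one of the quantities $i^{2}(j-k)+j^{2}(k-i)+k^{2}(i-j)=-(j-i)(k-i)(k-j)$, $ij(j-i)$, $ik(k-i)$, $jk(k-j)$, according to the configuration; all of these are nonzero because $0<i<j<k$, so $D_J'(\beta)\neq0$.

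The delicate point is the exact order $4$ in the first step: the easy bounds — each summand of the formula vanishes to order $2$ at $x=1$, equivalently $\mathbf{M}_J(1)$ has corank $2$ — only yield order $\ge 2$, so the cancellation of the order-$2$ and order-$3$ terms in the confluent-Vandermonde expansion genuinely has to be carried out. The case analysis in the second assertion is routine but needs care to confirm that precisely one factor $x^{m}-1$ vanishes per summand in each configuration.
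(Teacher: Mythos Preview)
Your argument is correct. The route, however, differs noticeably from the paper's.

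For the first assertion, the paper simply computes the first four derivatives of $D_J$ directly (after shifting so that $i_1>3$) and checks that $D_J^{(m)}(\beta)=0$ for $m\le 3$ while $D_J^{(4)}(\beta)=2\prod_{a<b}(i_a-i_b)\,\beta^{2i_1-4}\neq 0$. You instead reduce to $\beta=1$ via the scaling identity $D_{J'(\times g)}(x)=g^{2}D_{J'}(x^{g})$ and then handle $\beta=1$ by a confluent-Vandermonde expansion of $\det\bigl(R(1),R'(1),R(y),R'(y)\bigr)$. Your approach is more conceptual and explains \emph{why} the order is exactly $4$ (it is the confluence of two double points of a moment curve), whereas the paper's is a bare computation; on the other hand, the paper's version is shorter to write down and avoids the Taylor bookkeeping. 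One small imprecision: the factor $y$ in $D_J(y)=y\det(\dots)$ is silently absorbed when you state the $(y-1)^4$ coefficient, but since $y=1$ there this does not affect the conclusion.

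For the second assertion, the paper uses the column-permutation antisymmetry of $D_J$ to assume without loss of generality that $\beta^{i_1}$ is the exceptional power, and then states the closed form $D_J'(\beta)=\beta^{i_2-1}(\beta^{i_1}-\beta^{i_2})(i_4-i_3)(i_4-i_2)(i_3-i_2)$. You instead work from the explicit three-term formula of Lemma~\ref{lemma:i=0} and run through the four configurations. Your case analysis is sound (in each summand exactly one $(x^{m}-1)$ factor vanishes, and the surviving derivative contributions combine to a nonzero Vandermonde-type product), but the paper's symmetry reduction is tidier and collapses your four cases into one.
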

\begin{proof}
For the multiplicity of $\beta$ when all the powers of $\beta$ are
equal, assuming without loss of generality that $i_1>3$, the first
4 derivatives of $D_J$ can be calculated to see that
$D_J(\beta)=D'_{J}(\beta)=D^{(2)}_J(\beta) =D^{(3)}_J(\beta)=0$
while $D^{(4)}_J(\beta)={2(i_1 - i_2)(i_1 - i_3)(i_1 - i_4)(i_2 - i_3)
  (i_2 - i_4)(i_3 - i_4)\beta^{2i_1 - 4}} \neq 0.$

When only three of the powers of $\beta$ are equal, we can assume
$\beta^{i_1}\neq \beta^{i_2}=\beta^{i_3}=\beta^{i_4}$. Then it is
clear that $D_J(\beta)=0$, but $D'_J(\beta)=
\beta^{i_2-1}(\beta^{i_1} -
\beta^{i_2})(i_4-i_3)(i_4-i_2)(i_3-i_2)\neq 0.$
\end{proof}

\begin{lemma}\label{lemma:valD_I} Let $J=\{i_1,i_2,i_3,
i_4\}\subset \cA$ such that $g=\emph{gcd}(i_3-i_1,i_2-i_1)>1$ and
$\#J=4$. Let $\beta \in K\setminus\{1\}$ be a root of unity such that
$\beta^g=1$. Consider $b=\beta+ht^v+*\in \K$, where $h \in
K\setminus\{0\}$ and $v>0$. Then, the valuation of $D_J(b)$ is $4v$ if
$\beta^{i_4-i_1}=1$, and $v$ otherwise.
\end{lemma}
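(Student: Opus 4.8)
The plan is to reduce the claim to a statement about the multiplicity of $\beta$ as a root of the integer polynomial $D_J$, which is supplied by Lemma~\ref{lemma:mult_of_0,1}.

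First I would unwind the hypotheses. Since $\beta^g = 1$ and $g$ divides both $i_2 - i_1$ and $i_3 - i_1$, the powers $\beta^{i_1}$, $\beta^{i_2}$, $\beta^{i_3}$ all coincide. Hence the condition $\beta^{i_4 - i_1} = 1$ says precisely that all four powers $\beta^{i_1}, \beta^{i_2}, \beta^{i_3}, \beta^{i_4}$ are equal, while $\beta^{i_4 - i_1} \neq 1$ says that exactly three of them are equal and the remaining one, $\beta^{i_4}$, is different (an equality $\beta^{i_4} = \beta^{i_2}$ or $\beta^{i_4} = \beta^{i_3}$ would force $\beta^{i_4} = \beta^{i_1}$). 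As $\beta$ is a root of unity, Lemma~\ref{lemma:mult_of_0,1} applies in both cases and tells us that the multiplicity $m$ of $\beta$ as a root of $D_J$ is $4$ when $\beta^{i_4 - i_1} = 1$ and $1$ otherwise.

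Then the conclusion follows by a direct valuation count. Factor $D_J(x) = (x - \beta)^m\, q(x)$, where $q(\beta) \neq 0$; applying the division algorithm to the integer polynomial $D_J$ and the monic polynomial $(x-\beta)^m$ (whose coefficients lie in the valuation ring, as $\beta$ is integral), the quotient $q$ also has coefficients in the valuation ring. Evaluating at $b$ gives $D_J(b) = (b - \beta)^m\, q(b)$. Since $b = \beta + h t^v + *$ with $h \in K^*$, we have $\val(b - \beta) = v$, hence $\val((b-\beta)^m) = m v$. Moreover $\beta$ has valuation $0$, so $b$ has valuation $0$ and residue $\beta$, whence the residue of $q(b)$ equals $q(\beta) \neq 0$ and $\val(q(b)) = 0$. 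Therefore $\val(D_J(b)) = m v$, which is $4v$ if $\beta^{i_4 - i_1} = 1$ and $v$ otherwise. The only point requiring a bit of care is correctly matching the hypothesis on $\beta^{i_4 - i_1}$ with the two cases of Lemma~\ref{lemma:mult_of_0,1}; once that is settled, nothing here presents a genuine obstacle.
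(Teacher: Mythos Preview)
Your argument is correct. You reduce both cases uniformly to the multiplicity of $\beta$ as a root of $D_J$ via Lemma~\ref{lemma:mult_of_0,1}, and then read off the valuation from the factorisation $D_J(x)=(x-\beta)^m q(x)$ with $q(\beta)\neq 0$. The observation that $(x-\beta)^m$ is monic with coefficients in the valuation ring, so that $q$ also lands there and its residue at $b$ is $q(\beta)\neq 0$, is exactly what makes $\val(q(b))=0$ hold; this step is clean.

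The paper's proof reaches the same conclusion but is organised differently. It first normalises to $i_1=0$ using Lemma~\ref{lemma:i=0}, then splits into the two cases. When $d\nmid i_4$ (i.e.\ $\beta^{i_4-i_1}\neq 1$) it does not invoke the multiplicity-$1$ case of Lemma~\ref{lemma:mult_of_0,1} at all: instead it plugs $b$ into the explicit three-term formula for $D_{\{0,i_2,i_3,i_4\}}$ from Lemma~\ref{lemma:i=0} and extracts the leading term $-\beta^{-1}h\,i_2 i_3(i_3-i_2)(\beta^{i_4}-1)t^v$ by hand. When $d\mid i_4$ it passes through the scaling identity $D_J(b)=D_{J'}(b^{d'})$ with $d'=\gcd(i_2,i_3,i_4)$ to reduce to a multiplicity-$4$ root at $1$, and only then factors as $(x-1)^4 p(x)$.

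So the difference is that the paper treats the two cases by separate mechanisms (an explicit computation versus a multiplicity argument), while you handle both at once by appealing to both clauses of Lemma~\ref{lemma:mult_of_0,1}. Your route is shorter and more conceptual; the paper's route has the incidental benefit of producing the explicit lowest-order coefficient of $D_J(b)$ in the $m=1$ case, which is in the same spirit as the explicit leading-term computations used later (e.g.\ in the proof of Theorem~\ref{teo:SufficiencyIII}), though for the present lemma only the valuation is required.
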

\begin{proof}To compute $D_{J}(b)$ (up
to a sign) we can assume $i_1<i_2<i_3$ and use the $i_4$-th column
of $\mathbf{M}$ as the last one of $D_J$. By Lemma~\ref{lemma:i=0}
and the facts that $b\ne 0$ and $\val(b^{2i_1})=0$ we can assume
$i_1=0$. Also, by the formulation of $D_J$ in the same Lemma
(where now $i_4$ is possibly a negative number), {\small$$
D_{J}(b) =i_2 i_4 \, b^{i_2}(b^{i_4-i_2}-1)(b^{i_3}-1) - i_2 i_3
\, b^{i_2}(b^{i_3-i_2}-1)(b^{i_4}-1)- i_3 i_4 \,
b^{i_3}(b^{i_4-i_3}-1)(b^{i_2}-1),$$} Let $d>1$ be the order of
$\beta$ as a root of unity. If $d \nmid l$, $b^l-1=\beta^l-1+*$ of
valuation 0, while if $d \mid l$, $b^l= 1+*$ of valuation 0 and
$b^l-1= l\beta^{-1}ht^v+*$ of valuation $v$. If $ d \nmid i_4$,
$D_{J}(b)=-\beta^{-1}h\, i_2\,i_3\,(i_3-i_2) (\beta^{i_4}-1)t^v+*$
which has valuation $v$.

In case that $ d \mid i_4$, let $d':=gcd(i_4,i_3,i_2)$. Note that
$ d \mid d'$. Using the notation $i'_j=\frac{i_j}{d'}$ and
$J'=\{0,i'_2,i'_3,i'_4\}$, we have by Lemma~\ref{lemma:i=0}
$$D_{J}(b)=D_{J'}(b^{d'})=D_{J'}(1+d'\beta^{-1}ht^v+*).$$
By Lemma~\ref{lemma:mult_of_0,1}, the multiplicity of 1 as a root
of $D_{J'}$ is 4, that is there exists a polynomial $p\in \Z[x]$
such that $p(1)\ne 0$ and $D_{J'}(x)=(x-1)^4p(x)$. Then,
$D_{J}(b)=(d'\beta^{-1}ht^v+*)^4p(1+d'\beta^{-1}ht^v+*)$, which
has valuation $4v$.
\end{proof}

\begin{lemma}\label{lemma:alternativa}
Let $\beta\in K\setminus\{0\}$ and $J=\{i_1,i_2,i_3,i_4,i_5\}$ be a subset
of $\cA$ of cardinal 5 such that $D_{J-\{i_5\}}(\beta)=0$ and the
set $\{\beta^{i_1},\beta^{i_2},\beta^{i_3},\beta^{i_4}\}$ does not
have three elements equal and the remaining one different. Then,
either $D_{J-\{i_j\}}(\beta)=0$ for all $ 1 \le j \le 5$, or
$D_{J-\{i_j\}}(\beta)\neq0$ for all $ 1 \le j \le 4$.
\end{lemma}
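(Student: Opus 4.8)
The plan is to interpret the five quantities $D_{J-\{i_j\}}(\beta)$, $j=1,\dots,5$, as (up to sign) the maximal minors of the $4\times 5$ matrix $N:=\M_J(\beta)$ whose columns are the columns of $\M(\beta)$ indexed by $J$, and then to split according to $\operatorname{rank}(N)$. If $\operatorname{rank}(N)\le 3$, every $4\times 4$ minor of $N$ vanishes, which is exactly the first alternative. So the interesting case is $\operatorname{rank}(N)=4$: then $\ker(N)$ is a line, spanned by the vector $v$ whose $j$-th coordinate is $\pm D_{J-\{i_j\}}(\beta)$ (Cramer's rule / cofactor expansion), and the hypothesis $D_{J-\{i_5\}}(\beta)=0$ says $v_5=0$. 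Hence $v':=(v_1,v_2,v_3,v_4)$ is a nonzero vector in $\ker\M_{J'}(\beta)$ with $J'=\{i_1,i_2,i_3,i_4\}$, and what has to be shown is that all four of its coordinates are nonzero.

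First I would pin down which combinatorial type of powers of $\beta$ can occur in this case. Since $\M_{J'}(\beta)$ is $N$ with one column deleted, $\operatorname{rank}\M_{J'}(\beta)\ge 3$; and it is $\le 3$ because $D_{J'}(\beta)=D_{J-\{i_5\}}(\beta)=0$. Hence $\operatorname{rank}\M_{J'}(\beta)=3$, so by Lemma~\ref{lemma:rank2} the powers $\beta^{i_1},\dots,\beta^{i_4}$ are not all equal; in particular $\beta\neq 1$ (when $\beta=1$ the first and third rows of $\M_J(\beta)$ coincide, as do the second and fourth, so $\operatorname{rank}(N)\le 2$ and we are in the first case anyway), and $\beta\neq 0$ by hypothesis. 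Combined with the standing assumption that $\{\beta^{i_1},\dots,\beta^{i_4}\}$ is not of the form ``three equal, one different'', this forces: no value occurs three or more times among $\beta^{i_1},\beta^{i_2},\beta^{i_3},\beta^{i_4}$.

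The crux is then a small statement about sparse polynomials, and this is precisely where the hypothesis on the powers of $\beta$ enters. Suppose, for contradiction, that some coordinate of $v'$ vanishes. Then $g:=\sum_{k=1}^4 v_k x^{i_k}$ is a nonzero polynomial with at most three nonzero terms, all exponents in $J'$, and since $v'\in\ker\M_{J'}(\beta)$ with $\beta\neq 0$, the polynomial $g$ has a double root at $1$ and a double root at $\beta$. A binomial $x^a(\alpha+\nu x^m)$ or a monomial has no double root in $K^*$, so $g$ must be a genuine trinomial $x^a(\alpha+\mu x^p+\nu x^q)$ with $\alpha,\mu,\nu\in K^*$ and $0<p<q$, whose support is a $3$-element subset of $J'$. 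Imposing $h(1)=h'(1)=h(\beta)=h'(\beta)=0$ on $h=\alpha+\mu x^p+\nu x^q$ gives, successively, $\mu p+\nu q=0$ from $h'(1)=0$; then $\beta^{q-p}=1$ from $h'(\beta)=0$ (using $\beta\neq 0$); and then $\alpha(1-\beta^p)=0$, hence $\beta^p=1$, from $h(1)=0$ and $h(\beta)=0$. Therefore $\beta^a=\beta^{a+p}=\beta^{a+q}$, i.e.\ the three powers $\beta^i$ with $i\in\operatorname{supp}(g)$ coincide, so a value occurs at least three times among $\beta^{i_1},\dots,\beta^{i_4}$ — contradicting the previous paragraph. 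Hence no coordinate of $v'$ vanishes, which is the second alternative.

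The only step requiring an actual (but very short) computation is this trinomial identity — equivalently, the $3$-column analogue of Lemma~\ref{lemma:rank2}, namely that $\M_{\{a,b,c\}}(\beta)$ has rank $\le 2$ if and only if $\beta^a=\beta^b=\beta^c$. I expect the main point of care to be organizational rather than computational: one must notice that $\operatorname{rank}(N)=4$ automatically forces $\operatorname{rank}\M_{J'}(\beta)=3$, so that the two patterns excluded for $\{\beta^{i_1},\dots,\beta^{i_4}\}$ — all four equal, ruled out by Lemma~\ref{lemma:rank2}, and three equal with one different, ruled out by the hypothesis — between them cover every way in which a coordinate of $v'$, equivalently one of the minors $D_{J-\{i_j\}}(\beta)$ with $1\le j\le 4$, could vanish.
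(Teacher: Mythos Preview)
Your proof is correct, and it takes a genuinely different route from the paper's. The paper argues by symmetry that it suffices to show that if both $D_{J-\{i_5\}}(\beta)$ and $D_{J-\{i_1\}}(\beta)$ vanish then all five minors do; it then verifies this by a Sage computation of an ideal quotient, obtaining that the common vanishing locus of two minors (after saturating by the Vandermonde factor in the $y$-variables) decomposes as the union of the locus where all five minors vanish and the locus $\beta^{i_2}=\beta^{i_3}=\beta^{i_4}$, the latter being excluded by the hypothesis together with Lemma~\ref{lemma:rank2}.

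You instead interpret the five minors as the coordinates of a generator of $\ker N$ when $N=\M_J(\beta)$ has rank~$4$ (the rank $\le 3$ case being the first alternative immediately), and then observe that $v_5=0$ turns $v'=(v_1,\dots,v_4)$ into the coefficient vector of a nonzero polynomial supported in $\{i_1,\dots,i_4\}$ with double roots at $1$ and~$\beta$. The short trinomial computation---really the three-column analogue of Lemma~\ref{lemma:rank2}---then shows that if $v'$ had a zero coordinate, three of the powers $\beta^{i_k}$ would coincide, contradicting what you already derived from $\operatorname{rank}\M_{J'}(\beta)=3$ and the hypothesis. This is more conceptual and entirely elementary: it avoids the computer-algebra step and makes transparent exactly where the ``no three equal powers'' condition is used. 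The paper's approach has the virtue of being mechanical and easily extendable to the sibling Lemmas~\ref{lemma:triple_root}--\ref{lemma:goodsecondmonomial}, which are proved by the same saturation technique; your argument is cleaner here but does not obviously generalize to those other statements.
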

\begin{proof}
Since we are not assuming that the indices $\{i_j\}_{i=1}^4$ are
ordered, it is enough to prove that if $D_{J-\{i_1\}}(\beta)=0$
then $D_{J-\{i_j\}}(\beta)=0$ for all $ 2 \le j \le 4$. This
result follows from the direct computation below.

Let $y_1, y_2, y_3, y_4, y_5, x_1, x_2, x_3, x_4, x_5$ be
variables over $K$ and $\mathcal{J}_1$ the ideal generated by the
$4\times 4$ minors of the matrix {\small$$M=\begin{pmatrix} 1 & 1
& 1 & 1 & 1 \\ y_1 & y_2 & y_3 & y_4 & y_5
\\ x_1 & x_2 & x_3 & x_4 & x_5 \\ y_1x_1 & y_2x_2 & y_3x_3 &
y_4x_4 & y_5x_5 \end{pmatrix}.$$}Let $f$ be the polynomial $f(y_2,
y_3, y_4)=(y_4-y_3)(y_4-y_2)(y_3-y_2)$ and $\mathcal{I}_2$,
$\mathcal{I}_3$ be the ideals $\mathcal{I}_2=\langle
\det(M_{\{1,2,3,4\}}), \det(M_{\{2,3,4,5\}})\rangle$,
$\mathcal{I}_3=\langle x_{2}-x_3, x_2-x_4\rangle$. Then, by direct
computation (see Computation~\ref{comp:alternativa}) we can check
that the ideals $(\mathcal{I}_2: f)$ and $\mathcal{I}_1\cap
\mathcal{I}_3$ are equal.

If $D_{J-\{i_5\}}(\beta)=D_{J-\{i_1\}}(\beta)=0$, then $(i_1,
\dots, i_5, \beta^{i_1}, \dots, \beta^{i_5})$ is in the variety
defined by $\mathcal{I}_2$ and is not a root of $f$. Then is in
the variety defined by $(\mathcal{I}_2:f)$ and therefore gives us
two choices: \begin{itemize}
\item If the point $(i_1,\dots, i_5, \beta^{i_1}, \dots, \beta^{i_5})$
is in the variety defined by $\mathcal{I}_1$, then
$D_{J-\{i_j\}}(\beta)= 0$ for all $ 1 \le j \le 5$.
\item If the point $(i_1,\dots, i_5, \beta^{i_1}, \dots, \beta^{i_5})$
is in the variety defined by $\mathcal{I}_3$, then
$\beta^{i_2}=\beta^{i_3}=\beta^{i_4}.$ By the hypothesis of the
lemma, these three powers have to be equal to $\beta^{i_1}$ and
hence rank$(M_{J-\{i_5\}}(i_1, \dots, i_5, \beta^{i_1}, \dots,
\beta^{i_5}))=2$. Thus it is also true that
$D_{J-\{i_j\}}(\beta)=0$ for all $ 1 \le j \le 5$.
\end{itemize}
\end{proof}

\begin{lemma} \label{lemma:triple_root}
Let $J=\{i_1,i_2,i_3,i_4\}$ be a subset of $\cA$ and $\beta\in
K\setminus\{0\}$ a root of $D_J$ of multiplicity at least $3$. Then
$\beta$ is a root of unity and $\beta^{i_1}= \beta^{i_2}=
\beta^{i_3}=\beta^{i_4}$.
\end{lemma}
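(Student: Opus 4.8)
The plan is to control $\mathrm{ord}_\beta(D_J)$ through the rank of the \emph{constant} matrix $\M_J(\beta)$, reducing to the situations already settled by Lemmas~\ref{lemma:rank2} and~\ref{lemma:mult_of_0,1}. If $\beta=1$ the four powers are all $1$ and $1$ is a root of unity, so assume $\beta\neq 1$; by Lemma~\ref{lemma:i=0} we may translate $J$ so that $i_1=0$, which (as $\beta\neq 0$) changes neither $\mathrm{ord}_\beta(D_J)$ nor any relation $\beta^{i_a}=\beta^{i_b}$. Put $r=\mathrm{rank}(\M_J(\beta))$. Since $D_J(\beta)=0$ we have $r\le 3$, and $r\ge 2$ because the first two rows of $\M_J(\beta)$ are independent. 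If $r=2$, Lemma~\ref{lemma:rank2} gives $\beta^{i_1}=\beta^{i_2}=\beta^{i_3}=\beta^{i_4}$, hence $\beta^{i_2-i_1}=1$ and $\beta$ is a root of unity; this is exactly the conclusion. So everything reduces to the corank‑one case $r=3$, where I must show that $\mathrm{ord}_\beta(D_J)\ge 3$ is impossible.

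Assume $r=3$. A generator of $\ker\M_J(\beta)$ may be read as a nonzero polynomial $p_0=\sum_\ell v_\ell y^{i_\ell}$ supported on $J$, and the four equations $\M_J(\beta)v=0$ say precisely $p_0(1)=p_0'(1)=p_0(\beta)=p_0'(\beta)=0$, i.e. $(y-1)^2(y-\beta)^2\mid p_0$. Let $\psi=(\psi_1,\dots,\psi_4)$ span the left kernel, so $\psi_1+\psi_2 i_\ell+\psi_3\beta^{i_\ell}+\psi_4 i_\ell\beta^{i_\ell}=0$ for every $\ell$ and $\mathrm{im}(\M_J(\beta))=\{z:\psi z=0\}$. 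Writing out $\M_J'(\beta)v$ and using $\sum_\ell v_\ell i_\ell\beta^{i_\ell}=\beta p_0'(\beta)=0$ one finds $\M_J'(\beta)v=(0,0,0,\beta p_0''(\beta))^{T}$, and similarly the third coordinate of $\M_J''(\beta)v$ equals $p_0''(\beta)$. Now I invoke the standard local normal form of the matrix function $\M_J(x)$ at $\beta$ (the theory of Jordan chains, equivalently the local Smith form): in the corank‑one case, $\mathrm{ord}_\beta(D_J)\ge 3$ is equivalent to the solvability, for some $p_1,p_2\in\K^4$, of $\M_J(\beta)p_1=-\M_J'(\beta)p_0$ and $\M_J(\beta)p_2=-\M_J'(\beta)p_1-\tfrac12\M_J''(\beta)p_0$. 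Pairing the first equation with $\psi$ gives $\psi_4\,p_0''(\beta)=0$.

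Suppose first $\psi_4\neq 0$. Then $p_0''(\beta)=0$, so $\M_J'(\beta)p_0=0$, $p_1\in\ker\M_J(\beta)=\K p_0$, and the second equation becomes $\M_J(\beta)p_2=-\tfrac12\M_J''(\beta)p_0$; pairing with $\psi$ and using $p_0''(\beta)=0$ forces $p_0'''(\beta)=0$, whence $(y-\beta)^4\mid p_0$. But a nonzero polynomial divisible by $(y-\alpha)^t$ with $\alpha\neq 0$ has at least $t+1$ nonzero terms (its coefficient vector lies in the kernel of a full‑rank confluent Vandermonde block), so $p_0$ would have $\ge 5$ terms, impossible since $\#J=4$. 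Suppose instead $\psi_4=0$. Then $\psi_3\neq 0$ (else $\psi_1+\psi_2 i$ would vanish at four points), so the four points $(i_\ell,\beta^{i_\ell})$ lie on a line which, because $i_1=0$, has the form $\beta^{i_\ell}=1+b\,i_\ell$. Pairing the second chain equation with $\psi$ and comparing with the fourth coordinate of the first one again yields $p_0''(\beta)=0$, so $(y-1)^2(y-\beta)^3\mid p_0$. Consider now the $5\times 4$ matrix of the functionals $p\mapsto p(\beta),p'(\beta),p''(\beta),p(1),p'(1)$ on the space of polynomials supported on $J$: its first three rows are linearly independent (a confluent Vandermonde argument at $\beta\neq 0$), so the matrix has rank $3$, and since $p_0$ lies in its kernel the last row $R_5=(0,i_2,i_3,i_4)$ is a combination $\lambda_1R_1+\lambda_2R_2+\lambda_3R_3$ of the first three; the first coordinate forces $\lambda_1=0$. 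Substituting $\beta^{i_\ell}=1+b\,i_\ell$ turns the remaining equations into the statement that a quadratic polynomial in the variable $i$ takes the constant value $\beta^2$ at $i_2,i_3,i_4$, so its leading coefficient $\lambda_3 b$ vanishes. If $b=0$ then all powers equal $1$, forcing $r=2$ by Lemma~\ref{lemma:rank2}, a contradiction; if $\lambda_3=0$ then $R_5=\lambda_2R_2$ gives $\beta^{i_2}=\beta^{i_3}=\beta^{i_4}$, and Lemma~\ref{lemma:mult_of_0,1} yields multiplicity $4$ when $\beta^{i_1}$ agrees (again $r=2$) and $1$ otherwise (contradicting $\mathrm{ord}_\beta(D_J)\ge 3$). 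In every case $r=3$ is excluded, and the lemma follows.

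The main obstacle is the corank‑one analysis: extracting from $\mathrm{ord}_\beta(D_J)\ge 3$ the successive vanishings $p_0''(\beta)=p_0'''(\beta)=0$ through the bookkeeping with $\psi$ and the derivatives $\M_J^{(k)}(\beta)p_0$, and then converting this into a contradiction — elementary via the sparse‑polynomial bound in the case $\psi_4\neq 0$, and via the rank count for the confluent‑Vandermonde matrix in the more delicate case $\psi_4=0$, which one could alternatively settle by hand from the explicit formula for $D_{\{0,i,j,k\}}$ in Lemma~\ref{lemma:i=0}.
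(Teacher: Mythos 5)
Your proof is correct, and it takes a genuinely different route from the paper's. The paper settles the lemma by a computer calculation (Computation~\ref{comp:triple_root}): it forms the ideal generated by $D_J$ and its first two formal derivatives (in the variables $y_j,x_j$), saturates by the Vandermonde product and by $x_1x_2x_3x_4$, and verifies in Sage that the radical quotient is $\langle x_4-x_3,x_3-x_2,x_2-x_1\rangle$. You instead argue structurally: after reducing to $i_1=0$, $\beta\neq 0,1$, the rank of $\M_J(\beta)$ is $2$ or $3$; rank $2$ gives the conclusion directly via Lemma~\ref{lemma:rank2}, and in the corank-one case you invoke the Jordan chain (local Smith form) description of $\mathrm{ord}_\beta\det$ for matrix polynomials, pair the chain equations with the left kernel $\psi$, and in both subcases $\psi_4\neq 0$ and $\psi_4=0$ extract enough vanishing of $p_0^{(k)}(\beta)$ to contradict either the sparseness of $p_0$ (confluent Vandermonde) or reduce to Lemmas~\ref{lemma:rank2} and~\ref{lemma:mult_of_0,1}. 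The bookkeeping checks: $\M_J'(\beta)p_0=(0,0,0,\beta p_0''(\beta))^T$ since $p_0'(\beta)=0$; in the $\psi_4\neq 0$ branch the second pairing forces $p_0'''(\beta)=0$ hence $(y-\beta)^4\mid p_0$, impossible for a $4$-term polynomial with $\beta\neq 0$; in the $\psi_4=0$ branch comparing $q_1'(\beta)=-p_0''(\beta)$ (from the fourth coordinate of the first chain equation) with $q_1'(\beta)=-\tfrac12 p_0''(\beta)$ (from pairing the second) indeed gives $p_0''(\beta)=0$, and the rank-$3$ count of the $5\times 4$ functional matrix together with the collinearity $\beta^{i_\ell}=1+bi_\ell$ yields $\lambda_3 b=0$ and the dichotomy you state. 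The trade-off is that your argument is machine-free and more illuminating, but it imports the equivalence, in the corank-one case, between $\mathrm{ord}_\beta D_J\ge m$ and the existence of a length-$m$ Jordan chain starting from $p_0$ --- a standard fact about regular matrix polynomials, proved via the local Smith normal form over $K[x]_{(x-\beta)}$, which the paper does not otherwise use. One small slip: $p_1,p_2$ should live in $K^4$, not $\K^4$.
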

\begin{proof}
Let $y_1, y_2, y_3, y_4, x_1, x_2, x_3, x_4$ be variables over $K$
and $ \mathcal{I}$ the ideal generated by $\det(M), \det(M_1)$ and
$\det(M_{21}) + \det(M_{22})$,
where{\small$$ M=\begin{pmatrix} 1 & 1 & 1 & 1 \\
y_1 & y_2 & y_3 & y_4 \\ x_1 & x_2 & x_3 & x_4 \\
y_1x_1 & y_2x_2 & y_3x_3 & y_4x_4
\end{pmatrix}, M_1=\begin{pmatrix}1 & 1 & 1 & 1 \\
y_1 & y_2 & y_3 & y_4 \\ x_1 & x_2 & x_3 & x_4 \\
y_1^2x_1 & y_2^2x_2 & y_3^2x_3 & y_4^2x_4
\end{pmatrix},$$ $$M_{21}=\begin{pmatrix}1 & 1 & 1 & 1 \\
y_1 & y_2 & y_3 & y_4 \\ x_1 & x_2 & x_3 & x_4 \\
y_1^2(y_1-1)x_1 & y_2^2(y_2-1)x_2 & y_3^2(y_3-1)x_3 &
y_4^2(y_4-1)x_4 \end{pmatrix} \mbox{ and}$$ $\hspace{1,15cm} M_{22}=\begin{pmatrix}1 & 1 & 1 & 1 \\
y_1 & y_2 & y_3 & y_4 \\ y_1x_1 & y_2x_2 & y_3x_3 & y_4x_4 \\
y_1^2x_1 & y_2^2x_2 & y_3^2x_3 & y_4^2x_4
\end{pmatrix}.$}

Note that when we evaluate the matrices in $y_j=i_j$,
$x_j=x^{i_j}$, then $\det(M_1)(x)=\frac{\partial}{\partial
x}\det(M)(x)$ and $\det(M_{21})(x)+\det(M_{22})(x)=
\frac{\partial^2}{\partial x^2}\det(M)(x)$, so $\mathcal{I}$ is
the ideal where $D_J$ and its two derivatives vanish.

Let $f,g$ be the polynomials $f(y_1, y_2, y_3,
y_4)=\prod\limits_{1\le j<k\le 4}(y_k-y_j),$ $
g(x_1,x_2,x_3,x_4)=x_1x_2x_3x_4$ and $\mathcal{I}_1=(\mathcal{I} :
f^\infty)$. Then, by direct computation (See
Computation~\ref{comp:triple_root}) we can check that the ideal
$\mathcal{I}_2=(\sqrt{\mathcal{I}_1}: g)$ is exactly $\langle
x_4-x_3, x_3-x_2,x_2-x_1 \rangle$. If $\beta$ is at least a triple
root of $D_J$, as $(i_1,i_2,i_3,i_4, \beta^{i_1}, \beta^{i_2},
\beta^{i_3}, \beta^{i_4})$ is not a root of $f$ and $\beta\ne 0$,
then is in the variety defined by $\mathcal{I}_2$ and hence
$\beta$ is a root of unity with $\beta^{i_1}= \beta^{i_2}=
\beta^{i_3}=\beta^{i_4}$.
\end{proof}

\begin{lemma} \label{lemma:double_root}
Let $J=\{i_1,i_2,i_3,i_4, i_5\}$ be a subset of $\cA$ of cardinal
5 and $\beta \in K\setminus\{0\}$ a common root of $D_{J-\{i_j\}}$ for all
$ 1 \le j \le 5$ such that the elements in
$\{\beta^{i_j}\}_{j=1}^4$ are repeated at most once. If $\beta$ is a
multiple root of $D_{J-\{i_5\}}$, then it is a multiple root of
$D_{J-\{i_j\}}$ for all $ 1 \le j \le 4$.
\end{lemma}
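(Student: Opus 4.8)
The plan is to recast the statement as a purely linear-algebraic fact about the matrix $\M$ of Definition~\ref{def:matrixM(x)}, in the same spirit as Lemmas~\ref{lemma:alternativa} and~\ref{lemma:triple_root}, but to finish with an elementary geometric argument instead of an ideal computation. Write $y_j$ for $i_j$ and $x_j$ for $\beta^{i_j}$; the $y_j$ are pairwise distinct (since $\#J=5$) and the $x_j$ are nonzero (since $\beta\neq 0$). Put
\[
M=\begin{pmatrix}1&\cdots&1\\ y_1&\cdots&y_5\\ x_1&\cdots&x_5\\ y_1x_1&\cdots&y_5x_5\end{pmatrix},\qquad
M'=\begin{pmatrix}1&\cdots&1\\ y_1&\cdots&y_5\\ x_1&\cdots&x_5\\ y_1^2x_1&\cdots&y_5^2x_5\end{pmatrix}\in K^{4\times5}.
\]
Deleting column $j$ from $M$ yields $\M_{J\setminus\{i_j\}}(\beta)$ (up to column reordering), so that minor equals $\pm D_{J\setminus\{i_j\}}(\beta)=:\delta_j$; and, as in the differentiation performed in Lemma~\ref{lemma:triple_root}, $\tfrac{d}{dx}D_{J\setminus\{i_j\}}(x)=\tfrac1x\det\big(\text{rows }(1),(i_k),(x^{i_k}),(i_k^2x^{i_k})\big)$, so deleting column $j$ from $M'$ gives $\pm\beta\,D'_{J\setminus\{i_j\}}(\beta)=:\delta'_j$. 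Since $\beta\neq0$, the hypotheses say exactly that $\delta_1=\cdots=\delta_5=0$, i.e. $\operatorname{rank}M\le 3$, together with $\delta'_5=0$; the goal is $\delta'_1=\delta'_2=\delta'_3=\delta'_4=0$.

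First I would dispose of the degenerate case in which the rows $r_1=(1,\dots,1)$, $r_2=(y_k)_k$, $r_3=(x_k)_k$ of $M$ are linearly dependent. As $r_1,r_2$ are independent (distinct $y_k$), this forces $r_3\in\langle r_1,r_2\rangle$, so every $4\times4$ submatrix of $M$ and of $M'$ has three dependent rows; hence all $\delta_j$ and all $\delta'_j$ vanish, and the conclusion is immediate. Otherwise $\operatorname{rank}(r_1,r_2,r_3)=3$, and $\operatorname{rank}M\le3$ forces the fourth row of $M$ into their span: there exist $p,q,r\in K$ with $y_kx_k=p+qy_k+rx_k$ for all $k$, i.e. the five points $(y_k,x_k)$ lie on the affine conic $XY-rX-qY-p=0$. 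Applying this relation twice reduces $y_k^2x_k$ modulo $\langle r_1,r_2,r_3\rangle$ to $q\,y_k^2$, so the fourth row of $M'$ may be replaced by $(q\,y_k^2)_k$ without changing any minor; therefore $\delta'_j=\pm q\,E_j$, where $E_j$ is the generalized Vandermonde minor with rows $(1),(y_k),(y_k^2),(x_k)$ taken over the columns $\neq j$.

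If $q=0$ then $\delta'_j=0$ for every $j$ and we are done, so suppose $q\neq0$; then $\delta'_5=0$ gives $E_5=0$. The Vandermonde rows $(1),(y_k),(y_k^2)$ over columns $1,2,3,4$ are independent, so $E_5=0$ means $x_j=h(y_j)$ for $j=1,\dots,4$ and some $h\in K[Y]$ with $\deg h\le 2$. Substituting $x_j=h(y_j)$ into the conic relation for $j=1,\dots,4$ shows each $y_j$ is a root of $\Phi(Y)=(Y-r)h(Y)-qY-p$, a polynomial of degree $\le 3$; having four distinct roots it vanishes identically, and comparing coefficients forces $h$ to be the constant $q$. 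Thus $x_1=x_2=x_3=x_4=q$, contradicting the hypothesis that each value of $\{\beta^{i_j}\}_{j=1}^4$ occurs at most twice. Hence $q=0$ after all, so $\delta'_1=\cdots=\delta'_4=0$; together with $\delta_1=\cdots=\delta_4=0$ this says $\beta$ is a multiple root of $D_{J\setminus\{i_j\}}$ for $j=1,\dots,4$. The only delicate bookkeeping is checking that the conic relation really does collapse the $(y_k^2x_k)$-row as claimed and tracking which genericity hypothesis is used where; an alternative entirely parallel to Lemmas~\ref{lemma:alternativa} and~\ref{lemma:triple_root} is to verify the corresponding ideal membership by computer, saturating by $\prod_{a<b}(y_a-y_b)$, by $\prod_k x_k$, and by the ideal of the locus where three of $x_1,\dots,x_4$ coincide.
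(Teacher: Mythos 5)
You give a correct proof by a route genuinely different from the paper's. The paper proves this lemma by a computer-verified ideal-membership computation (Computation~\ref{comp:double_root}): it forms the ideal $\mathcal{I}$ generated by the $4\times4$ minors of $M$ together with $\det((M_1)_{\{1,2,3,4\}})$, takes the colon by $\langle x_1-x_2,x_1-x_3\rangle$ and the saturation by the Vandermonde $\prod(y_a-y_b)$, and verifies in Sage that all $4\times 4$ minors of $M_1$ land in the resulting ideal (the WLOG $\beta^{i_1}\ne\beta^{i_2}$, $\beta^{i_1}\ne\beta^{i_3}$ is what makes the colon harmless). Your argument replaces the black-box computation by a transparent geometric one: from $\operatorname{rank}M\le 3$ you deduce that the five points $(y_k,x_k)$ lie on a conic $y x - q y - r x - p = 0$, use it to rewrite the $(y_k^2x_k)$-row of $M'$ as $q\,(y_k^2)_k$ modulo the first three rows, and then, assuming $q\ne 0$, use $\delta'_5=0$ to interpolate $x_j=h(y_j)$ with $\deg h\le 2$ and a degree count to force $h$ constant, hence $x_1=\cdots=x_4$, contradicting the multiplicity hypothesis. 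This is a valid proof, with the side benefit of making visible that the hypothesis ``repeated at most once'' is used only to exclude the all-equal case; the paper's approach trades conceptual content for a short mechanical check. Two small remarks: the displayed conic should read $XY-qX-rY-p=0$ (with $X\leftrightarrow y_k$, $Y\leftrightarrow x_k$) rather than $XY-rX-qY-p=0$, though your subsequent manipulations use the correct relation $y_kx_k=p+qy_k+rx_k$ and are unaffected; and the saturation you propose as a computational alternative differs slightly from the paper's (the paper colons once by the ideal of the locus $x_1=x_2=x_3$ and saturates only by the Vandermonde, not by $\prod_k x_k$), but either version would work.
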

\begin{proof} Without loss of generality we can assume
$\beta^{i_1}\neq \beta^{i_2}$ and $\beta^{i_1}\neq \beta^{i_3}$.

Let $y_1, y_2, y_3, y_4,y_5, x_1, x_2, x_3, x_4,x_5$ be variables
over $K$ and $\mathcal{I}$ the ideal generated by the $4 \times 4$
minors of the matrix $M$ and $\det((M_1)_{\{1,2,3,4\}})$ where
{\small$$ M=\begin{pmatrix} 1 & 1 & 1 & 1 & 1 \\
y_1 & y_2 & y_3 & y_4 & y_5 \\ x_1 & x_2 & x_3 & x_4 & x_5 \\
y_1x_1 & y_2x_2 & y_3x_3 & y_4x_4 & y_5x_5
\end{pmatrix}, M_1=\begin{pmatrix}1 & 1 & 1 & 1 & 1 \\
y_1 & y_2 & y_3 & y_4 & y_5 \\ x_1 & x_2 & x_3 & x_4 & x_5 \\
y_1^2x_1 & y_2^2x_2 & y_3^2x_3 & y_4^2x_4 & y_5^2x_5
\end{pmatrix}.$$}
Let $f$ be the polynomial $f(y_1, y_2, y_3,
y_4)=\prod\limits_{1\le j<k\le 4}(y_k-y_j)$,
$\mathcal{I}_1=\langle x_1-x_2, x_1-x_3\rangle$ and
$\mathcal{I}_2=(\mathcal{I} : \mathcal{I}_1)$. Then, by direct
computation (see Computation~\ref{comp:double_root}) we can check
that the $4 \times 4$ minors of the matrix $M_1$ are elements of
the ideal $\mathcal{I}_3= (\mathcal{I}_2: f^\infty) $. If $\beta$
is a common root of $D_{J-\{i_j\}}$ for all $ 1 \le j \le 4$ and a
multiple root of $D_{J-\{i_5\}}$, then $(i_1,i_2,i_3,i_4,i_5,
\beta^{i_1},\beta^{i_2},\beta^{i_3},\beta^{i_4}, \beta^{i_5})$ is
in the variety defined by $\mathcal{I}_3$ and hence $\beta$ is a
multiple root of $D_{J-\{i_j\}}$ for all $ 1 \le j \le 4$.
\end{proof}

\begin{lemma}\label{lemma:goodsecondmonomial}
Let $J=\{i_1,i_2,i_3,i_4\}$ be a subset of $\cA$ and $\beta \in
K\setminus\{0,1\}$ a root of $D_J$ such that the powers in
{$\{\beta^{i_j}\}_{j=1}^4$ are repeated at most once. Then there}
exists $i_5 \in \cA-{J}$ such that $ D_{(J\cup\{i_5\})-\{i_j\}} $
does not vanish at $\beta$ for all $ 1 \le j \le 4$.
\end{lemma}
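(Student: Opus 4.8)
The plan is to work directly with the columns of $\mathbf M(\beta)$. Write $C_k = (1,k,\beta^k,k\beta^k)^T \in K^4$ for $k \in \cA$, so that for a $4$-set $J'$ the value $D_{J'}(\beta)$ is, up to sign, the determinant of the matrix with columns $C_k$, $k \in J'$, and hence is nonzero exactly when those four columns are linearly independent. First I would pin down the rank of $\mathbf M_J(\beta)$: the hypothesis $D_J(\beta)=0$ says the columns $C_{i_1},\dots,C_{i_4}$ are dependent; the first two rows of $\mathbf M_J$ are independent because the $i_j$ are distinct, so the rank is at least $2$; and since the powers $\beta^{i_1},\dots,\beta^{i_4}$ are \emph{not} all equal (each value occurs at most twice), Lemma~\ref{lemma:rank2} forbids rank $2$. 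Therefore $\mathbf M_J(\beta)$ has rank exactly $3$, so $W:=\langle C_{i_1},\dots,C_{i_4}\rangle$ is a $3$-dimensional subspace of $K^4$.

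The key structural step is to show that \emph{every} three of the columns $C_{i_1},\dots,C_{i_4}$ are already linearly independent. Suppose to the contrary that, say, $C_{i_1},C_{i_2},C_{i_3}$ span only a $2$-dimensional space. As before the first two rows $(1,1,1)$ and $(i_1,i_2,i_3)$ span the row space of that $4\times 3$ matrix, so the third row is an affine function of the index, $\beta^{i_l}=a+b\,i_l$ for $l=1,2,3$, and likewise $i_l\beta^{i_l}=c+d\,i_l$; substituting the first relation into the second gives $b\,i_l^2+(a-d)i_l-c=0$ for the three distinct values $i_1,i_2,i_3$, which forces $b=0$ and hence $\beta^{i_1}=\beta^{i_2}=\beta^{i_3}$ --- three equal powers, contradicting the hypothesis. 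Thus for each $j$ the span $V_j$ of $\{C_{i_k}:k\neq j\}$ is $3$-dimensional; since $V_j\subseteq W$ and $\dim W=3$, in fact $V_j=W$ for all $j$.

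Finally I would translate back. For $i_5\in\cA\setminus J$ and any $j$, the minor $D_{(J\cup\{i_5\})-\{i_j\}}(\beta)$ is nonzero precisely when $C_{i_5}$ together with $\{C_{i_k}:k\neq j\}$ is a basis of $K^4$, i.e. when $C_{i_5}\notin V_j=W$ --- a condition independent of $j$. So it suffices to exhibit one $i_5\in\cA$ with $C_{i_5}\notin W$, and then automatically $i_5\notin J$ since $C_{i_1},\dots,C_{i_4}\in W$. Such an $i_5$ exists because, by Lemma~\ref{lemma:B3}, $\mathbf M(\beta)$ has rank $4$ (here $\beta\neq 0,1$), so the columns $C_0,\dots,C_n$ span all of $K^4\supsetneq W$ and cannot all lie in $W$. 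I expect the only point requiring care to be the linear-independence claim of the second paragraph: the observation that ``repeated at most once'' is exactly what collapses the four hyperplanes $V_j$ onto $W$. Once that is in place, Lemma~\ref{lemma:B3} closes the argument immediately.
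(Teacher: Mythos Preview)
Your proof is correct and takes a genuinely different, more elementary route than the paper's.

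The paper proceeds in two stages: first it invokes Lemma~\ref{lemma:alternativa} to reduce the problem to finding a single $i_5$ for which $D_{(J\cup\{i_5\})\setminus\{i_1\}}(\beta)\neq 0$, and then it verifies by a symbolic Sage computation (Computation~\ref{comp:goodsecondmonomial}) that this cannot fail simultaneously for all $i_5\in\{i_1,i_1+1,i_1+2,i_1+3\}$, since the resulting ideal quotient forces $\beta^{i_1}=\beta^{i_2}=\beta^{i_3}=\beta^{i_4}$.

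Your argument replaces both stages with clean linear algebra. The crucial observation---that under the ``repeated at most once'' hypothesis any three of the four columns $C_{i_1},\dots,C_{i_4}$ are already independent---is exactly what the paper's computation is detecting, but you prove it by hand via the neat affine-linearity trick on the rows. Once you know each $V_j$ equals the common $3$-space $W$, the nonvanishing condition becomes \emph{independent of $j$}, which is precisely the content of Lemma~\ref{lemma:alternativa} in this situation; you then finish with Lemma~\ref{lemma:B3} alone. So your proof is computer-free, subsumes the appeal to Lemma~\ref{lemma:alternativa}, and in fact yields the stronger statement that \emph{every} $i_5\in\cA$ with $C_{i_5}\notin W$ works. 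The paper's approach, by contrast, pins $i_5$ down to a small explicit window near $i_1$, which is not needed for the application but comes at the cost of the symbolic verification.
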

\begin{proof}
Note that if $\beta$ is a root of $D_{(J\cup\{i_5\})-\{i_1\}}$,
then by Lemma~\ref{lemma:alternativa} $\beta$ it is a root of
every $D_{(J\cup\{i_5\})-\{i_j\}}$ for all $ 1 \le j \le 4$. We
now see this can not happen for every $ i_5 \in
\{i_1,i_1+1,i_1+2,i_1+3\}$.

Let $y_1, y_2, y_3, y_4, x_1, x_2, x_3, x_4, x$ be variables over
$K$ and $\mathcal{I}$ the ideal generated by the $4 \times 4$
minors of the matrices $M_i$ for all $ 0 \le i \le 3$
where{\small$$ M_i=\begin{pmatrix} 1 & 1 & 1 & 1 & 1 \\
y_1 & y_2 & y_3 & y_4 & y_1+i \\ x_1 & x_2 & x_3 & x_4 & x_1x^i \\
y_1x_1 & y_2x_2 & y_3x_3 & y_4x_4 & (y_1+i)x_1x^i
\end{pmatrix}.$$}
Let $f$ be the polynomial $f(y_1, y_2, y_3,
y_4)=\prod\limits_{1\le j<k\le 4}(y_k-y_j)$. By direct computation
(see Computation~\ref{comp:goodsecondmonomial}) we can check the
equality of ideals $(\mathcal{I} : x_1x_4x(x-1)^4f)=\langle
x_1-x_4, x_2-x_4, x_3-x_4\rangle$. If
$D_{(J\cup\{i_1+i\})-\{i_j\}}(\beta)=0$ for all $1 \le j \le 4$
and $ 0 \le i \le 3$, then $(i_1, \dots, i_4, \beta^{i_1}, \dots,
\beta^{i_4}, \beta)$ is in the variety defined by $\mathcal{I}$
but is not a zero of $x_1x_4x(x-1)^4f(y_1, \dots, y_4)$. Hence
$\beta^{i_1}=\beta^{i_2}=\beta^{i_3}=\beta^{i_4}$ which
contradicts the statement. Then, there exists $i \in \{0,1,2,3\}$
such that $i_5=i_1+i$ fulfills $
D_{(J\cup\{i_5\})-\{i_j\}}(\beta)\ne 0 $ for all $ 1 \le j \le 4$.
\end{proof}

We present now two technical results which are needed in Section
\ref{sec:excepConfig}:

\begin{lemma}\label{lemma:diofantic_system}
Let $(n, m) \in \N^2$ with $n > 1$ such that $n$ is multiple of
$m$ and $m+1$ is multiple of $n-1$. Then $(n,m) \in
\{(2,1),(3,1),(2,2),(4,2),(3,3)\}$.
\end{lemma}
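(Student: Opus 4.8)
The plan is to convert the two divisibility hypotheses into matching upper and lower bounds on $m$ in terms of $n$, which reduces the statement to inspecting three cases. First I would observe that $m \mid n$ with $n\ge 2$ forces $1\le m\le n$, while the fact that $n-1\ge 1$ divides the positive integer $m+1$ forces $m+1\ge n-1$, that is, $m\ge n-2$. Hence $m\in\{n-2,\,n-1,\,n\}$; when $n=2$ the value $m=n-2=0$ is not available, since $m\ge 1$.

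Next I would dispose of the three cases one at a time using the remaining divisibility. If $m=n$, then $n-1$ divides $n+1=(n-1)+2$, so $n-1\mid 2$ and $n\in\{2,3\}$, yielding $(2,2)$ and $(3,3)$. If $m=n-1$, then $m\mid n$ becomes $n-1\mid n=(n-1)+1$, so $n-1\mid 1$, giving $n=2$ and the pair $(2,1)$. If $m=n-2$, then $m\mid n$ becomes $n-2\mid n=(n-2)+2$, so $n-2\mid 2$ and $n\in\{3,4\}$, yielding $(3,1)$ and $(4,2)$. Collecting all solutions gives exactly $\{(2,1),(3,1),(2,2),(4,2),(3,3)\}$, and a direct check confirms that each of these five pairs does satisfy both original divisibilities.

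Since the argument is an elementary finite reduction there is no genuine obstacle; the only point meriting a little care is the boundary behaviour at $n=2$, where $n-1=1$ divides every $m+1$ and the condition collapses to $m\mid 2$, so $m\in\{1,2\}$ --- and both values already appear in the claimed list, so nothing is missed.
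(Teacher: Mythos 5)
Your proof is correct. Both you and the paper derive the same bounds $m\le n\le m+2$ from the two divisibilities and then finish by a finite case analysis; you organize the cases by the gap $n-m\in\{0,1,2\}$ while the paper organizes them by the value of $m$, but this is only a cosmetic difference and the underlying argument is the same.
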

\begin{proof}
As $n$ is multiple of $m$ and $m+1$ is multiple of $n-1$, then
$m\leq n\leq m+2$. For $m \ge 3$, this inequality implies $n=m$.
{Thus, if} $m\geq 4$, then $(m+1)/(m-1)<2$ and this leads to a
contradiction because $m+1$ is a multiple of $m-1$. Analyzing the
possible cases for $m=1,2$ we obtain the ordered pairs from the
statement.
\end{proof}

\begin{theorem} \label{teo:excepConfig} Let $J=\{i_1,i_2,i_3,i_4\}$
be a subset of $\cA$. For every root $\beta \in K\setminus\{0,1\}$ of
$D_J(x)$ there are at least three powers $\{\beta^{i_1},
\beta^{i_2}, \beta^{i_3}, \beta^{i_4}\}$ equal, if and only if
$J$ is the affine image $(J'(\times s))(+r)$ of
an exceptional configuration $J'$.
\end{theorem}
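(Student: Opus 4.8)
\emph{The easy direction.} Suppose $J=(J'(\times s))(+r)$ with $J'$ exceptional. By Lemma~\ref{lemma:i=0}, passing from $J'$ to $J$ multiplies $D_{J'}$ by a power of $x$ and performs the substitution $x\mapsto x^{s}$; this creates no root outside $\{0,1\}$ and, for each surviving root, preserves which of the powers $\beta^{i}$ ($i\in J$) coincide, while the new roots $\zeta$ with $\zeta^{s}=1$ satisfy $\zeta^{sj+r}=\zeta^{r}$ for all $j$, i.e.\ have all four powers equal. Hence it suffices to check the five configurations $J'$ directly: computing $D_{J'}(x)$ from the closed formula in Lemma~\ref{lemma:i=0} shows that its roots outside $\{0,1\}$ lie in each case among $-1$ and the two primitive cube roots of unity, and that at each such $\beta$ three of $\beta^{i_1},\dots,\beta^{i_4}$ equal $1$.

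\emph{The main direction.} Assume every root $\beta\in K\setminus\{0,1\}$ of $D_J$ has three of $\beta^{i_1},\dots,\beta^{i_4}$ equal. By Lemma~\ref{lemma:i=0} I normalize $i_1=0$ and $\gcd(i_2,i_3,i_4)=1$ and must show $J$ is exceptional. Since $\gcd(i_2,i_3,i_4)=1$, the four powers of such a $\beta$ cannot all coincide (that would force $\beta=1$), so exactly three coincide; by Lemma~\ref{lemma:mult_of_0,1} $\beta$ is then a simple root of $D_J$, and the coincidence forces $\beta^{g}=1$ for one of $g_a:=\gcd(i_2,i_3)$, $g_b:=\gcd(i_2,i_4)$, $g_c:=\gcd(i_3,i_4)$, $g_d:=\gcd(i_3-i_2,i_4-i_3)$. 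As $1$ is a root of $D_J$ of multiplicity exactly $4$ (Lemma~\ref{lemma:mult_of_0,1}) and $D_J$ has leading coefficient $i_2(i_4-i_3)$ and order $i_2$ at the origin (Lemma~\ref{lemma:B3}), I can write
\[
D_J(x)=i_2(i_4-i_3)\,x^{i_2}(x-1)^4\,q(x),\qquad q\ \text{monic},\ \ q(0)q(1)\neq0,\ \ \deg q=i_3+i_4-i_2-4.
\]
The remarks above say $q$ is squarefree and all its roots lie in the set $\Omega$ of $g_a$-th, $g_b$-th, $g_c$-th and $g_d$-th roots of unity other than $1$; conversely, whenever $\zeta^{g_\bullet}=1$ three of the columns of $\M_J(\zeta)$ lie in a common $2$-dimensional subspace, so $D_J(\zeta)=0$ and $\zeta$ is a root of $q$. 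Hence the roots of $q$ are \emph{exactly} $\Omega$.

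Next I would set up the arithmetic. From $\gcd(i_2,i_3,i_4)=1$ one checks that $g_a,g_b,g_c,g_d$ are pairwise coprime, whence $g_ag_b\mid i_2$, $g_ag_c\mid i_3$, $g_bg_c\mid i_4$, $g_ag_d\mid i_3-i_2$, $g_bg_d\mid i_4-i_2$ and $g_cg_d\mid i_4-i_3$. Pairwise coprimality also gives $\#\Omega=g_a+g_b+g_c+g_d-4$, so comparison with $\deg q$ yields the identity
\[
i_3+i_4-i_2=g_a+g_b+g_c+g_d.
\]
The reversal $J\mapsto\{i_4-i:i\in J\}$ preserves the hypothesis and swaps $(g_a,g_b,g_c,g_d)$ with $(g_d,g_c,g_b,g_a)$, which halves the remaining work. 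Since $i_2<i_3<i_4$, the identity bounds $i_2$, $i_3$, $i_4$ and the three pairwise differences all below $g_a+g_b+g_c+g_d$, so each of the six products above is at most $g_a+g_b+g_c+g_d-1$.

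The final step, which I expect to be the main obstacle, is an elementary case analysis on how many of $g_a,g_b,g_c,g_d$ exceed $1$. If three or four do, then, being pairwise coprime, their two largest values $g',g''$ satisfy $g'g''\ge g_a+g_b+g_c+g_d$ (a short estimate), contradicting the last bound. If exactly two exceed $1$, the product bounds rule out every pair except $\{g_b,g_c\}$, for which $(g_b,g_c)\in\{(2,3),(3,2)\}$; this forces $i_4=6$ and then $J\in\{\{0,2,3,6\},\{0,3,4,6\}\}$. If exactly one exceeds $1$, it must be $g_b$ or $g_c$; the divisibilities then force it to equal $2$ and $J\in\{\{0,2,3,4\},\{0,1,2,4\}\}$. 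If all $g_\bullet=1$ then $q$ is constant, so $i_3+i_4=i_2+4$ and $J=\{0,1,2,3\}$. In each surviving case $J$ is one of the five exceptional configurations, completing the proof.
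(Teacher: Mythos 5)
Your proof is correct, and it takes a genuinely different route from the paper's.

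The paper works directly with the possibly-unreduced configuration (so $s=\gcd(i_4-i_1,i_3-i_1,i_2-i_1)$ can exceed $1$ and roots of multiplicity $4$ must be counted), and derives first the inequality $i_3-i_2\le s$ (from a count of roots together with Lemma~\ref{lemma:mult_of_0,1} and Lemma~\ref{lemma:B3}), whence $s=i_3-i_2$; a second inequality then becomes an equality, forcing $s_2=i_4-i_3$ and $s_3=i_2-i_1$, and the case analysis is delegated to a separate Diophantine Lemma~\ref{lemma:diofantic_system} via $n=(i_3-i_1)/s$, $m=(i_4-i_3)/s$. You instead normalize $i_1=0$ and $\gcd(i_2,i_3,i_4)=1$ (a reduction justified by the closed formula of Lemma~\ref{lemma:i=0}: $D_{J(\times s)}(x)=s^2D_J(x^s)$ exactly transports the ``three powers equal'' condition between the reduced and unreduced configurations, while the new roots with $\zeta^s=1$, $\zeta\neq1$, have all four powers equal). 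Under that normalization all relevant roots are simple, so instead of an inequality you obtain the exact identity $i_3+i_4-i_2=g_a+g_b+g_c+g_d$ and run an elementary case analysis on how many of the pairwise-coprime $g_\bullet$ exceed $1$, bypassing Lemma~\ref{lemma:diofantic_system} altogether. That identity is a nice structural observation not made explicit in the paper, and it makes the ``if'' direction and the ``only if'' direction fit together cleanly. Two small presentation issues to tidy: (i) the phrase ``this creates no root outside $\{0,1\}$'' in the easy direction is confusing --- the substitution $x\mapsto x^s$ does create new roots, namely the $s$-th roots of old roots and the nontrivial $s$-th roots of unity; what you actually use (and say in the next clause) is that these new roots again satisfy the hypothesis. (ii) In the ``exactly two exceed $1$'' case, the phrase ``the product bounds rule out every pair except $\{g_b,g_c\}$'' is a slight over-claim: the product bound $g_xg_y\le G-1$ gives $\{2,3\}$ for every pair equally, and it is the additional ordering constraints $0<i_2<i_3<i_4$ (applied to the specific quantity each product divides) that eliminate the pairs other than $\{g_b,g_c\}$. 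The ``short estimate'' $g'g''\ge g_a+g_b+g_c+g_d$ when three or four of the $g_\bullet$ exceed $1$ is true and elementary (with $p\ge q\ge r\ge2$ pairwise coprime one has $q\ge3$, $p\ge q+1$, so $(p-1)(q-1)\ge q(q-1)\ge r+w+1$), but it deserves a line of justification. With those points filled in, your argument is a complete and somewhat more transparent alternative to the one in the paper.
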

\begin{proof}
Consider $\beta\neq 0,1$. The set
$\{\beta^{i_1},\beta^{i_2},\beta^{i_3},\beta^{i_4}\}$ has three
elements equal if $\beta$ is a $s_j$-th root of unity for some $j
\in \{1,2,3,4\}$, where $s_1=\gcd(i_3-i_2,i_4-i_2)$,
$s_2=\gcd(i_4-i_1,i_3-i_1)$, $s_3=\gcd(i_4-i_1,i_2-i_1)$,
$s_4=\gcd(i_3-i_1,i_2-i_1)$. Note that, if all the powers of
$\beta$ are equal, $\beta$ is a $s$-th root of unity where
$s=\gcd({s_1,s_2,s_3,s_4})=\gcd(i_4-i_1,i_3-i_1,i_2-i_1)$. Moreover,
note that if $\beta$ is a $s_j$-th root of unity for $j \in
\{1,2,3,4\}$, then either $\beta^{i_4-i_3}=1$ or
$\beta^{i_2-i_1}=1$, and both equalities hold if and only if
$\beta^s=1$.

By Lemma~\ref{lemma:mult_of_0,1}, since every $\beta\ne 0,1$ root
of $D_J$ satisfy $\beta^{s_j}=1$ for some $j \in \{1,2,3,4\}$,
$\beta$ has multiplicity 4 if $\beta^s=1$ and multiplicity 1
otherwise. Then, the set of roots of $D_J$ (counted with
multiplicity) that make at least three $\beta^{i_j}$ equal are at
most $4s+(i_4-i_3-s)+(i_2-i_1-s)=i_4-i_3+i_2-i_1+2s$. Also by
{Lemma~\ref{lemma:B3}}, $D_J$ has $i_4+i_3-i_2-i_1$ nonzero
roots. As every root of $D_J$ makes three powers of $\beta$ equal,
$$i_4+i_3-i_2-i_1\le i_4-i_3+i_2-i_1+2s,$$ or equivalently $i_3 -i_2
\le s$. But as $s$ divides $i_3-i_2$, $s=i_3-i_2$ and therefore
$s_1=s_4=s$. Thus, the only way to have three powers equal and one
different is if $\beta$ is a $s_2$-th or $s_3$-th root of unity,
and therefore
$i_4+i_3-i_2-i_1 \le 4s + (s_2-s)+ (s_3-s)$.
As $s_2\leq i_4-i_3$, $s_3\leq i_2-i_1$ and $2s= 2i_3-2i_2$, so
$$4s+(s_2-s)+(s_3-s)\le i_4-i_3+i_2-i_1+2s=i_4+i_3-i_2-i_1.$$
Hence, all inequalities are, in fact, equalities.

Finally, we analyze the cases when this is possible, that is when
$s_2=i_4-i_3$ and $s_3=i_2-i_1$. As $s_2=\gcd(i_4-i_3,i_3-i_1)$
and $s_3=\gcd(i_4-i_2,i_2-i_1)$, this happens if and only if
$i_3-i_1$ is a multiple of $i_4-i_3$ and $i_4-i_2$ is a multiple
of $i_2-i_1$. Considering the integers $n=\frac{i_3-i_1}{s}$,
$m=\frac{i_4-i_3}{s}$, then $n$ is a multiple of $m$ and (as
$i_3-i_2=s$) $m+1=\frac{i_4-i_2}{s}$ is a multiple of
$n-1=\frac{i_2-i_1}{s}$. By Lemma~\ref{lemma:diofantic_system},
the possible values for $(n, m)$ are:
\begin{itemize}
\item $(n,m)=(2,1)$ in  which case $(i_1, i_2, i_3, i_4)=
((0,1,2,3)(\times s))(+i_1)$.
\item $(n,m)=(3,1)$ in  which case $(i_1, i_2, i_3, i_4)=
((0,2,3,4)(\times s))(+i_1)$.
\item $(n,m)=(2,2)$ in  which case $(i_1, i_2, i_3, i_4)=
((0,1,2,4)(\times s))(+i_1)$.
\item $(n,m)=(4,2)$ in  which case $(i_1, i_2, i_3, i_4)=
((0,3,4,6)(\times s))(+i_1)$.
\item $(n,m)=(3,3)$ in  which case $(i_1, i_2, i_3, i_4)=
((0,2,3,6)(\times s))(+i_1)$.
\end{itemize}
As these are exactly the exceptional configurations under affine
maps, this completes the proof.
\end{proof}

\section{Computations}\label{app:SAGE}

We presented several lemmas that require symbolic
computations which can be done using mathematical software. We have
implemented these computations in Sage~\cite{SA} and we reproduce here the codes
that we have used to check that the claims are correct.

\begin{comp}[For Proposition~\ref{prop:exceptionalConfig}]\label{comp:exceptionalConfig}
\mbox{}
\begin{verbatim}
sage: K.<c6,beta,delta,epsilon,x> = QQ[]
sage: f = c6*(x-1)**2*(x-beta)**2*(x^2+delta*x+epsilon)
sage: c1 = f.coefficient({x:1})
sage: c2 = f.coefficient({x:2})
sage: c3 = f.coefficient({x:3})
sage: c4 = f.coefficient({x:4})
sage: c5 = f.coefficient({x:5})
sage: I = Ideal(c1,c4,c5)
sage: beta*c2*c3 in I
True
\end{verbatim}
\end{comp}

\begin{comp}[For Lemma~\ref{lemma:alternativa}]\label{comp:alternativa}
\mbox{}
\begin{verbatim}
sage: K.<y1,y2,y3,y4,y5,x1,x2,x3,x4,x5>=QQ[]
sage: M=matrix(4,5,[1,1,1,1,1,y1,y2,y3,y4,y5,x1,x2,x3,x4,x5,\
      y1*x1,y2*x2,y3*x3,y4*x4,y5*x5])
sage: N = M.minors(4)
sage: I1 = Ideal(N); I2 = Ideal(N[0],N[-1]); I3 = Ideal(x2-x3,x2-x4)
sage: I2.quotient(Ideal((y4-y3)*(y4-y2)*(y3-y2))) == \
      I1.intersection(I3)
True
\end{verbatim}
\end{comp}

\begin{comp}[For Lemma~\ref{lemma:triple_root}]\label{comp:triple_root}
\mbox{}
\begin{verbatim}
sage: K.<y1,y2,y3,y4,x1,x2,x3,x4>=QQ[]
sage: M=matrix([[1,1,1,1],[y1,y2,y3,y4],[x1,x2,x3,x4],\
      [y1*x1,y2*x2,y3*x3,y4*x4]])
sage: M1=matrix([[1,1,1,1],[y1,y2,y3,y4],[x1,x2,x3,x4],\
      [y1*y1*x1,y2*y2*x2,y3*y3*x3,y4*y4*x4]])
sage: M21=matrix([[1,1,1,1],[y1,y2,y3,y4],[x1,x2,x3,x4],\
      [y1*y1*(y1-1)*x1,y2*y2*(y2-1)*x2,y3*y3*(y3-1)*x3,\
      y4*y4*(y4-1)*x4]])
sage: M22=matrix([[1,1,1,1],[y1,y2,y3,y4],[y1*x1,y2*x2,y3*x3,\
      y4*x4],[y1*y1*x1,y2*y2*x2,y3*y3*x3,y4*y4*x4]])
sage: I=Ideal(M.det(),M1.det(),M21.det()+M22.det())
sage: vandermonde=matrix([[1,1,1,1],[y1,y2,y3,y4],
      [y1**2,y2**2,y3**2,y4**2],[y1**3,y2**3,y3**3,y4**3]]).det()*K
sage: I1=I.quotient(vandermonde)
sage: while I != I1:
             I=I1
             I1=I1.quotient(vandermonde)
sage: I.radical().quotient(Ideal(x1*x2*x3*x4))
Ideal (x4 - x3, x4 - x2, x4 - x1) of Multivariate Polynomial Ring
in y1, y2, y3, y4, x1, x2, x3, x4 over Rational Field
\end{verbatim}
\end{comp}

\begin{comp}[For Lemma~\ref{lemma:double_root}]\label{comp:double_root}
\mbox{}
\begin{verbatim}
sage: K.<y1,y2,y3,y4,y5,x1,x2,x3,x4,x5>=QQ[]
sage: M=matrix([[1,1,1,1,1],[y1,y2,y3,y4,y5],[x1,x2,x3,x4,x5],
      [y1*x1,y2*x2,y3*x3,y4*x4,y5*x5]])
sage: M1=matrix([[1,1,1,1],[y1,y2,y3,y4],[x1,x2,x3,x4],
      [y1*y1*x1,y2*y2*x2,y3*y3*x3,y4*y4*x4]])
sage: I=Ideal(M.minors(4))+Ideal(M1.det())
sage: vandermonde=matrix([[1,1,1,1],[y1,y2,y3,y4],
      [y1**2,y2**2,y3**2,y4**2],[y1**3,y2**3,y3**3, y4**3]]).det()*K
sage: I=I.quotient(Ideal(x1-x2,x1-x3))
sage: I2=I.quotient(vandermonde)
sage: while I != I2:
             I=I2
             I2=I2.quotient(vandermonde)
sage: M1(y1=y5,x1=x5).det() in I
True
sage: M1(y2=y5,x2=x5).det() in I
True
sage: M1(y3=y5,x3=x5).det() in I
True
sage: M1(y4=y5,x4=x5).det() in I
True
\end{verbatim}
\end{comp}

\begin{comp}[For Lemma~\ref{lemma:goodsecondmonomial}]\label{comp:goodsecondmonomial}
\mbox{}
\begin{verbatim}
sage: K.<y1,y2,y3,y4,y5,x1,x2,x3,x4,x5,x>=QQ[]
sage: M0=matrix([[1,1,1,1],[y1,y2,y3,y4],[x1,x2,x3,x4],
      [y1*x1,y2*x2,y3*x3,y4*x4]])
sage: M=matrix([[1,1,1,1,1],[y1,y2,y3,y4,y5],[x1,x2,x3,x4,x5],
      [y1*x1,y2*x2,y3*x3,y4*x4,y5*x5]])
sage: I=Ideal(M0.det())
sage: I=I+Ideal(M(y5=y1,x5=x1).minors(4))
sage: I=I+Ideal(M(y5=y1+1,x5=x1*x).minors(4))
sage: I=I+Ideal(M(y5=y1+2,x5=x1*x**2).minors(4))
sage: I=I+Ideal(M(y5=y1+3,x5=x1*x**3).minors(4))
sage: I.quotient(Ideal((y1-y2)*(y1-y3)*(y1-y4)*(y2-y3)*(y2-y4)
      *(y3-y4)*x1*x4*x*(x-1)**4))
Ideal (x3 - x4, x2 - x4, x1 - x4) of Multivariate Polynomial Ring
in y1, y2, y3, y4, y5, x1, x2, x3, x4, x5, x over Rational Field
\end{verbatim}
\end{comp}

\section*{Acknowledgments}

\noindent AD is supported by UBACYT  20020100100242, CONICET PIP
11220110100580 and ANPCyT 2013-1110, Argentina. MIH is supported
by UBACyT 20020120100133, CONICET PIP 0099/11 and ANPCyT
2013-1110. She is grateful to the Simons Institute for the Theory
of Computing, Berkeley, USA, where this work was developed. LFT {is
supported by the Spanish Ministerio de Econom\'{\i}a y
Competitividad and by the European Regional Development Fund (ERDF),
under the project  MTM2014-54141-P.} We would like to
thank Alexander Esterov for useful comments. We are also grateful to Florian Block
for his enthusiasm to start this project and for his
participation at the early stages of this work.


\end{document}